\newtheorem{thm}{Theorem}[section]
\newtheorem{lem}[thm]{Lemma}
\newtheorem{prop}[thm]{Proposition}
\newtheorem{cor}[thm]{Corollary}
\DeclareMathOperator{\loc}{loc}
\DeclareMathOperator{\reg}{reg}
\newcommand{\RR}{\mathbb{R}}     
\newcommand{\NN}{\mathbb{N}}     
\newcommand{\F}{\mathcal{F}}  
\newcommand{\J}{\mathcal{J}}  
\newcommand{\K}{\mathcal{K}}
\newcommand{\e}{\varepsilon}
\newcommand{\RNum}[1]{\uppercase\expandafter{\romannumeral #1\relax}}
\newcommand{\niii}[1]{{\left\vert\kern-0.25ex\left\vert\kern-0.25ex\left\vert #1 
    \right\vert\kern-0.25ex\right\vert\kern-0.25ex\right\vert}}
\numberwithin{equation}{section} 
\begin{document}

\nocite{*}

\title{Higher order Gamma-limits for singularly perturbed Dirichlet-Neumann problems}

\author{
    Giovanni Gravina\\
    Department of Mathematical Sciences\\
    Carnegie Mellon University\\
    Pittsburgh, PA, USA\\
    ggravina@andrew.cmu.edu
  \and
    Giovanni Leoni\\
    Department of Mathematical Sciences\\
    Carnegie Mellon University\\
    Pittsburgh, PA, USA\\
    giovanni@andrew.cmu.edu
    }

\maketitle

\begin{abstract}
A mixed Dirichlet-Neumann problem is regularized with a family of singularly perturbed Neumann-Robin boundary problems, parametrized by $\e > 0$. Using an asymptotic development by Gamma-convergence, the asymptotic behavior of the solutions to the perturbed problems  is studied as $\e \to 0^+$, recovering classical results in the literature. 
\vspace*{10px}
\newline
\textbf{Mathematics Subject Classification (2010)}: 49J45, 35B25, 35J25.
\newline
\textbf{Keywords}: Dirichlet-Neumann problems, Neumann-Robin problems, higher order Gamma-convergence, perturbation problems. 
\end{abstract}

 
\section{Introduction}
Mixed Dirichlet-Neumann boundary value problems arise naturally from a wide range of applications. Examples are the problem of a rigid punch or stamp making contact with an elastic body (see \cite{cd96}, \cite{MR1634297}, \cite{MR548943}, and the references therein), the steady flow of an ideal inviscid and incompressible fluid through an aperture in a reservoir (see \cite{MR3707494}, \cite{MR548943}, and the references therein), as well as free boundary problems (see, e.g., \cite{altcaffarelli81}).

The prototype for this kind of problems is given by 
\begin{equation}
\label{P0}
\left\{
\arraycolsep=1.4pt\def\arraystretch{1.6}
\begin{array}{rll}
\Delta u_0 = & f &  \text{ in } \Omega, \\
\partial_{\nu}u_0 = & 0 & \text{ on } \Gamma_N, \\
u_0 = & g & \text{ on } \Gamma_D,
\end{array}
\right.
\end{equation}
where $\Omega \subset \RR^N$ is an open set with sufficiently smooth boundary and $\Gamma_D,\Gamma_N$ are disjoint sets such that 
\[
\partial \Omega = \overline{\Gamma_D} \cup \overline{\Gamma_N}.
\]

It is well known (see \cite{dauge}, \cite{grisvard85}, \cite{kondratiev}, and \cite{MR0601607}) that solutions to mixed boundary problems are in general not smooth near the points on the boundary of the domain where two different conditions meet. Indeed, when $N = 2$ in (\ref{P0}), $f = 0$, $g = 0$, and $\Omega$ is given in polar coordinates by 
\[
\{(r,\theta) : r > 0, 0 < \theta < \pi\},
\]
the function $S \colon \Omega \to \RR$ given is polar coordinates by\footnote{In what follows, given a function $v = v(\bm{x})$ where $\bm{x} = (x,y)$, we denote by $\bar{v}$ the function $\bar{v}(r,\theta) \coloneqq v(r \cos \theta, r \sin \theta)$, and with a slight abuse of notation we write $v = \bar{v}(r,\theta)$.}
\begin{equation}
\label{singularfunction}
\bar{S}(r,\theta) \coloneqq r^{1/2}\sin\left(\theta/2\right)
\end{equation}
is a solution to (\ref{P0}). However, $S$ fails to be in $H^2$ in any neighborhood of the origin. 

In dimension $N = 2$ it turns out that functions of the type (\ref{singularfunction}) completely characterize the behavior of solutions to (\ref{P0}). Indeed, we have the following classical result (see \cite{dauge}, \cite{grisvard85}, \cite{kondratiev}, and \cite{MR0601607}).

\begin{thm}
\label{H2dec}
Let $N = 2$, and let $\Omega$ be an open, bounded, and connected subset of $\RR^2$, with $\partial \Omega$ of class $C^{1,1}$. Assume that $\Gamma_D$ and $\Gamma_N$ are nonempty, relatively open, and connected subsets of $\partial \Omega$ with 
\[
\partial \Omega = \overline{\Gamma_D} \cup \overline{\Gamma_N}, \quad \text{ and } \quad \overline{\Gamma_D} \cap \overline{\Gamma_N} = \{\bm{x}_1, \bm{x}_2\},
\]
and that $\partial \Omega \cap B_{\rho}(\bm{x}_i)$ is a segment for $i = 1,2$ and for some $0 < \rho < \min\{1, |\bm{x}_1 - \bm{x}_2|/2\}$. Let $f \in L^2(\Omega)$, $g \in H^{3/2}(\partial \Omega)$, and let $u \in H^1(\Omega)$ be a weak solution to $(\ref{P0})$. Then $u$ admits the decomposition
\[
u = u_{\reg} + \sum_{i = 1}^2c_iS_i,
\]  
where $u_{\reg} \in H^2(\Omega)$ and the $c_i$ are coefficients that only depend on $u$. The singular functions $S_i$ are given by the formula
\[
\bar{S}_i(r_i,\theta_i) = \bar{\varphi}(r_i)r_i^{1/2}\sin(\theta_i/2),
\]
where $(r_i, \theta_i)$ are polar coordinates centered at $\bm{x}_i$ such that 
\begin{align*}
\Omega \cap B_{\rho}(\bm{x}_i) = &\ \left\{\bm{x}_i + (r_i,\theta_i) : 0 < r_i < \rho, 0 < \theta_i < \pi \right\}, \\
\Gamma_D \cap B_{\rho}(\bm{x}_i) = &\ \left\{\bm{x}_i + (r_i,0) : 0 < r_i < \rho \right\},
\end{align*}
and $\bar{\varphi} \in C^{\infty}([0,\infty))$ is such that $\bar{\varphi} \equiv 1$ in $[0,\rho/2]$ and $\bar{\varphi} \equiv 0$ outside $[0,\rho]$. Furthermore, there exists a constant $c$, which only depends on the geometry of $\Omega$, such that
\[
\|u_{\reg}\|_{H^2(\Omega)} + \sum_{i = 1}^2|c_i| \le c\left(\|f\|_{L^2(\Omega)} + \|g\|_{H^{3/2}(\partial \Omega)}\right).
\]
\end{thm}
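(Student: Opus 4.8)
The plan is to localize the problem near the two corner points $\bm{x}_1,\bm{x}_2$ where the boundary conditions change type, treating separately the behavior away from the corners, where the solution is regular, and the behavior at each corner, where the characteristic $r^{1/2}$ singularity appears. I would fix a partition of unity $\{\eta_0,\eta_1,\eta_2\}$ subordinate to the cover $\{\Omega_0, B_{\rho}(\bm{x}_1), B_{\rho}(\bm{x}_2)\}$, where $\Omega_0$ is an open set whose closure avoids both corners. On $\Omega_0$ the portion of the boundary that is met carries a single boundary condition, either pure Dirichlet on a relatively compact piece of $\Gamma_D$ or pure Neumann on $\Gamma_N$, so that the $C^{1,1}$ regularity of $\partial \Omega$ together with $f \in L^2$ and $g \in H^{3/2}$ places $\eta_0 u$ in $H^2$ by classical elliptic regularity, with an estimate $\|\eta_0 u\|_{H^2(\Omega)} \le c(\|f\|_{L^2(\Omega)}+\|g\|_{H^{3/2}(\partial\Omega)})$. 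This disposes of everything except two neighborhoods of the corners.

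The heart of the argument is the analysis near a single corner $\bm{x}_i$. After standard reductions (subtracting an $H^2(\Omega)$ lifting of $g$ and absorbing the modified forcing, which remains in $L^2$), the localized function $v \coloneqq \eta_i u$ solves the mixed problem for the Laplacian on $\Omega \cap B_{\rho}(\bm{x}_i)$, with data in the natural classes and commutator terms supported away from $\bm{x}_i$ that contribute only to the regular part. By hypothesis $\partial \Omega \cap B_{\rho}(\bm{x}_i)$ is a straight segment, so in the polar coordinates $(r_i,\theta_i)$ centered at $\bm{x}_i$ the domain coincides exactly with the half-disk $\{0 < r_i < \rho,\ 0 < \theta_i < \pi\}$, with homogeneous Dirichlet data on $\theta_i = 0$ and Neumann data on $\theta_i = \pi$, as in the model problem (\ref{P0}).

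I would then apply the Mellin transform in the radial variable $r_i$, which converts $\Delta$ into a holomorphic operator pencil $\mathcal{A}(\lambda)$ acting on functions of $\theta_i \in (0,\pi)$ subject to the mixed boundary conditions. Separation of variables identifies its spectrum: the admissible exponents are $\lambda = k + \tfrac12$, $k \in \NN \cup \{0\}$, with eigenfunctions $\sin((k+\tfrac12)\theta_i)$, exactly as in (\ref{singularfunction}). Since $r^{\lambda}\sin(\lambda\theta)$ fails to lie in $H^2$ near the origin precisely when $\lambda \le 1$, the \emph{only} singular exponent between the weight lines corresponding to $H^1$- and $H^2$-membership is $\lambda = \tfrac12$. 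Writing $v$ through the Mellin inversion formula and shifting the integration contour across this single pole, the residue produces the term $c_i \bar{\varphi}(r_i) r_i^{1/2}\sin(\theta_i/2)$, while the shifted contour integral represents a remainder in $H^2$; the cutoff $\bar{\varphi}$ enters when the local expansion is patched back into $\Omega$. The coefficient $c_i$ is given explicitly by the residue and hence depends linearly and continuously on the data.

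Finally, summing $\eta_0 u$, $c_1 S_1$, $c_2 S_2$, and the two $H^2$ remainders, and collecting the constant-dependence from each step, yields the decomposition together with the asserted bound; uniqueness of the $c_i$, and hence their dependence on $u$ alone, follows since $S_1, S_2 \notin H^2$ are linearly independent modulo $H^2(\Omega)$. The main obstacle I anticipate is the corner analysis of the preceding paragraph: setting up the operator pencil correctly, verifying that $\lambda = \tfrac12$ is the unique pole in the relevant strip, and controlling the contour-shift remainder in $H^2$ with a constant depending only on the geometry of $\Omega$. Everything else, namely the localization, the trace lifting of $g$, and the off-corner elliptic estimates, is comparatively routine.
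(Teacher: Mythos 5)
The paper does not actually prove this theorem: it is quoted as a classical result with pointers to Dauge, Grisvard, and Kondratiev, and your argument --- localization by a partition of unity, reduction to the model half-disk with Dirichlet data on $\theta_i = 0$ and Neumann data on $\theta_i = \pi$, Mellin transform to an operator pencil with spectrum $\lambda = k + \tfrac{1}{2}$, and a contour shift across the single pole $\lambda = \tfrac{1}{2}$ lying between the $H^1$ and $H^2$ weight lines, whose residue produces $c_i\bar{\varphi}(r_i)r_i^{1/2}\sin(\theta_i/2)$ --- is precisely the method of those cited references. Your sketch is a correct blueprint of that classical proof (the pencil spectrum, the uniqueness of the pole in the relevant strip, and the independence of $S_1,S_2$ modulo $H^2(\Omega)$ are all right), so it matches the approach the paper implicitly relies on.
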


An approach that often proved to be successful for the study of ill-posed problems, and in general for problems that present singularities of some kind, is to consider a small perturbation, typically chosen with an opportunely regularizing effect, and then carry out a careful analysis on the convergence of solutions of the regularized problems to solutions of the original one. This procedure often requires to prove estimates that are independent of the parameter of the regularization. We refer to the classical monograph of Lions \cite{lions} for more details.

The aim of this paper is to regularize the problem (\ref{P0}) by introducing a family of mixed Neumann-Robin boundary value problems parametrized by $\e > 0$. To be precise, we consider
\begin{equation}
\label{Pe}
\left\{
\arraycolsep=1.4pt\def\arraystretch{1.6}
\begin{array}{rll}
\Delta u_{\e} =  & f & \text{ in } \Omega, \\
\partial_{\nu}u_{\e} = & 0 & \text{ on } \Gamma_N, \\
\e \partial_{\nu} u_{\e} + u_{\e} = & g & \text{ on } \Gamma_D.
\end{array}
\right.
\end{equation}

The convergence of solutions to (\ref{Pe}) to solutions of (\ref{P0}) has been studied by Costabel and Dauge in \cite{cd96} using classical PDE expansions (see \cite{lions}), who proved the following result.

\begin{thm}[Costabel-Dauge]
\label{cdthm}
Let $N = 2$, $\Omega$ be as in \Cref{H2dec}, $f = 0$, $g \in H^{1 + \delta}(\Gamma_D)$ for some $\delta > 0$, and let $u_{\e}$ and $u_0$ be solutions to $(\ref{Pe})$ and $(\ref{P0})$ (with $f = 0$), respectively. Then 
\begin{align}
\|u_{\e} - u_0\|_{L^2(\Omega)} = &\ \mathcal{O}(\e \log \e), \notag  \\
\|u_{\e} - u_0\|_{H^{1 + s}(\Omega)} = &\ \mathcal{O}(\e^{1/2 - s}), \text{ for } s \in \left(-\frac{1}{2}, \frac{1}{2}\right), \label{ue-u0H1+s}\\
\big\|(u_{\e} - u_0)_{|_{\Gamma_D}}\big\|_{L^2(\Gamma_D)} = &\ \mathcal{O}(\e\sqrt{|\log \e|}). \label{ue-u0L2bdry}
\end{align}
Moreover, these estimates cannot be improved in general.
\end{thm}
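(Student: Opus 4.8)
The plan is to study the difference $w_\e \coloneqq u_\e - u_0$, which by linearity and $f = 0$ solves the homogeneous Neumann--Robin problem with $\Delta w_\e = 0$ in $\Omega$, $\partial_\nu w_\e = 0$ on $\Gamma_N$, and $\e\,\partial_\nu w_\e + w_\e = -\e\,\partial_\nu u_0$ on $\Gamma_D$, the last identity using $u_0 = g$ on $\Gamma_D$. Testing the weak formulation against $w_\e$ itself yields the energy identity
\begin{equation}
\label{energyid}
\|\nabla w_\e\|_{L^2(\Omega)}^2 + \frac{1}{\e}\|w_\e\|_{L^2(\Gamma_D)}^2 = -\int_{\Gamma_D}\partial_\nu u_0 \, w_\e \, d\mathcal{H}^1.
\end{equation}
First I would record the coercivity of the underlying bilinear form (a Poincaré inequality on $\Omega$, using that $\Gamma_D$ has positive $\mathcal{H}^1$-measure) to guarantee well-posedness of $u_\e$, and then treat \eqref{energyid} as the basic source of a priori bounds: any estimate of the right-hand side in terms of $\|w_\e\|_{L^2(\Gamma_D)}$ and $\|\nabla w_\e\|_{L^2(\Omega)}$ feeds back, through Young's inequality, into a bound for the left-hand side, and in particular for the boundary norm in \eqref{ue-u0L2bdry}.

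The heart of the matter is the behavior of $\partial_\nu u_0$ on $\Gamma_D$ near the corners $\bm{x}_1,\bm{x}_2$. By \Cref{H2dec} applied to $u_0$ one has $u_0 = u_{0,\reg} + \sum_i c_i S_i$, so that $\partial_\nu u_{0,\reg} \in L^2(\Gamma_D)$, while in the polar coordinates centered at $\bm{x}_i$ a direct computation from $\bar{S}_i = \bar{\varphi}\, r^{1/2}\sin(\theta/2)$ gives $\partial_\nu S_i \sim -\tfrac{1}{2}\, r^{-1/2}$ along $\{\theta = 0\} = \Gamma_D$. Thus the datum $-\e\,\partial_\nu u_0$ is of size $\e$ but just barely fails to lie in $L^2(\Gamma_D)$, and it is exactly this borderline $r^{-1/2}$ profile that generates the logarithmic factors. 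I would capture the sharp dependence on $\e$ through a boundary-layer construction near each corner: the rescaling $\bm{x} = \bm{x}_i + \e\bm{y}$ turns the Robin condition $\e\partial_\nu + \id$ into the order-one condition $\partial_{\nu_y} + \id$ on the model sector $\{0 < \theta < \pi\}$, and the outer singular term $r^{1/2}\sin(\theta/2)$ becomes $\e^{1/2}\rho^{1/2}\sin(\theta/2)$ with $\rho = r/\e$. Solving the half-plane Robin--Neumann problem for a corner profile and matching it to the outer expansion $u_0 + \e u_1$ across the overlap region $\e \lesssim r \lesssim 1$ is the main obstacle: the matching integral $\int_\e^1 r^{-1}\,dr \sim |\log\e|$ is precisely what produces the logarithm, and controlling the residual of this matched expansion uniformly in $\e$—again through \eqref{energyid} applied to the remainder—is the most delicate step.

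For the $L^2(\Omega)$ estimate I would use an Aubin--Nitsche duality argument. Let $z$ solve the dual mixed Dirichlet--Neumann problem $\Delta z = w_\e$ in $\Omega$, $z = 0$ on $\Gamma_D$, $\partial_\nu z = 0$ on $\Gamma_N$; by \Cref{H2dec} applied with datum $w_\e$ one has $z = z_{\reg} + \sum_i d_i S_i$ with $\|z_{\reg}\|_{H^2(\Omega)} + \sum_i|d_i| \le c\|w_\e\|_{L^2(\Omega)}$. Integrating by parts twice and using $\Delta w_\e = 0$, the matching Neumann conditions on $\Gamma_N$, and $z = 0$ on $\Gamma_D$ collapses everything to
\begin{equation*}
\|w_\e\|_{L^2(\Omega)}^2 = \int_{\Gamma_D} w_\e\,\partial_\nu z \, d\mathcal{H}^1 .
\end{equation*}
On $\Gamma_D$ one has $w_\e = -\e\,\partial_\nu u_0 - \e\,\partial_\nu w_\e \sim \e\, r^{-1/2}$, while $\partial_\nu z \sim r^{-1/2}$ with coefficient controlled by $\|w_\e\|_{L^2(\Omega)}$; the product again integrates to a logarithm cut off at the layer scale $r \sim \e$, giving $\|w_\e\|_{L^2(\Omega)}^2 \lesssim \e|\log\e|\,\|w_\e\|_{L^2(\Omega)}$ and hence the rate $\mathcal{O}(\e\log\e)$.

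Finally, \eqref{ue-u0H1+s} would be read off from the corner-layer expansion of the second paragraph. The leading profile, being an amplitude-$\e^{1/2}$ rescaling at scale $\e$ of the fixed function $\rho^{1/2}\sin(\theta/2)$, has $H^{1+s}(\Omega)$-norm of order $\e^{1/2}\cdot\e^{-s} = \e^{1/2-s}$ for every $s < 1/2$ by the homogeneity of the $\dot H^{1+s}$ seminorm in dimension two, the norm degenerating as $s \to 1/2$ in accordance with $S_i \notin H^{3/2}$; the remainder is of higher order, and the intermediate values of $s$ follow by real interpolation between the endpoint bounds. The sharpness assertion is built into the same construction: since the leading corner profile is explicit and enters with the coefficient $c_i$ of $u_0$, whenever some $c_i \ne 0$ the matching lower bounds are attained, so none of the rates can be improved in general. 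I expect the genuinely hard work to be concentrated in the corner/boundary-layer analysis—both the construction of the profile and the uniform-in-$\e$ remainder estimate—together with the regularity bookkeeping needed to invoke \Cref{H2dec} under the weaker hypothesis $g \in H^{1+\delta}(\Gamma_D)$.
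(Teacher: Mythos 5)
Your outline is, in architecture, a reconstruction of the original Costabel--Dauge argument, and it is important to note that the paper never proves \Cref{cdthm} itself: the theorem is quoted from \cite{cd96}, and the paper's own contribution (\Cref{mixedest}) is an alternative, purely variational proof of just two of its assertions, namely \eqref{ue-u0L2bdry} and the $s=0$ case of \eqref{ue-u0H1+s}. So your route is genuinely different from the paper's. You work at the PDE level: the energy identity for $w_{\e} = u_{\e} - u_0$, a matched corner-layer expansion at scale $\e$, an Aubin--Nitsche duality step for the $L^2(\Omega)$ rate, and homogeneity of the rescaled layer profile for $H^{1+s}$. The paper instead normalizes $w_{\e}$ by $\e\sqrt{|\log \e|}$ (resp.\ $\sqrt{\e}$), bounds the normalized energies via the expansions \eqref{Fe1Tay} and \eqref{Fe2Tay}, and extracts \eqref{re-bdry}--\eqref{re-grad} from the weak compactness of \Cref{1gc-cpt} and \Cref{2gc-cpt} by contradiction. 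Underneath, the mechanisms coincide: your pairing $\int_{\Gamma_D}\partial_{\nu}u_0\,w_{\e}\,d\mathcal{H}^1$ is only formal, since $\partial_{\nu}u_0 \notin L^2(\Gamma_D)$, and must be interpreted through the decomposition of \Cref{IBP}; once the singular part is split at $r \sim \e$, the inner piece is controlled by the Hardy-type trace inequality (\Cref{hardyonballs}, \Cref{lemmacpt}) and the outer piece produces exactly your matching integral $\int_{\e}^{\rho} r^{-1}\,dr \sim |\log \e|$. In fact, your energy identity combined with these two tools already yields \eqref{re-bdry} and \eqref{re-grad} with no layer construction at all --- this is essentially the paper's compactness computation in disguise. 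What your route buys beyond the paper: the full range $s \in (-1/2,1/2)$, the interior $L^2$ rate $\mathcal{O}(\e \log \e)$, and the sharpness statements; what the paper's route buys: estimates valid for all energy-bounded sequences rather than only minimizers, and no a priori ansatz for the expansion. Two soft spots to watch in your sketch: real interpolation between the $L^2(\Omega)$ and $H^1(\Omega)$ bounds does \emph{not} reach $\mathcal{O}(\e^{1/2-s})$ for $s<0$ (it gives only $\e^{(1-s)/2}$ up to logarithms), so the negative-$s$ estimates must come from the direct scaling of the layer together with remainder bounds in each $H^{1+s}$ norm separately; and \Cref{H2dec} as stated requires $g \in H^{3/2}(\partial\Omega)$, so the ``regularity bookkeeping'' you flag for $g \in H^{1+\delta}(\Gamma_D)$ is a real obligation, handled in \cite{cd96} but not in this paper.
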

We refer to \cite{cd96} for the precise statement in the case $f \neq 0$. This problem was also previously considered by Colli Franzone in \cite{MR0417565}, where the author proved estimates on the difference $u_{\e} - u_0$ in certain Sobolev norms (see also the work of Aubin \cite{aubin} and Lions \cite{lions}). 

The question of convergence of solutions to the family of problems (\ref{Pe}) to the solution to (\ref{P0}) is of significance for the numerical approximations of (\ref{P0}). We refer to \cite{belgacem}, \cite{MR1115205}, \cite{MR1634297}, \cite{MR0448191}, \cite{MR0367754}, and the references therein for more information on this topic. 

In this paper we present an alternative proof of the estimates (\ref{ue-u0H1+s}) with $s = 0$ and (\ref{ue-u0L2bdry}) using the variational structure of (\ref{Pe}). Indeed, solutions to (\ref{Pe}) are minimizers of the functional 
\begin{equation}
\label{functionals}
\int_{\Omega}\left(\frac{1}{2}|\nabla v|^2 + fv\right)\,d\bm{x} + \frac{1}{2\e}\int_{\Gamma_D}(v - g)^2\,d\mathcal{H}^1, \quad v \in H^1(\Omega).
\end{equation}
Thus a natural approach is to use the notion of Gamma-convergence ($\Gamma$-convergence in what follows) introduced by De Giorgi in \cite{MR0375037} (for more information see also \cite{braides} and \cite{dalmaso}).

We recall that given a metric space $X$ and a family of functions $\F_{\e} \colon X \to \overline{\RR}$, $\e > 0$, we say that $\{\F_{\e}\}_{\e}$ $\Gamma$-converges to $\F_0 \colon X \to \overline{\RR}$ as $\e \to 0^+$, and we write $\F_{\e} \overset{\Gamma}{\to} \F_0$, if for every sequence $\e_n \to 0^+$ the following two conditions hold:
\begin{itemize}
\item[$(i)$] \emph{liminf inequality}: for every $x \in X$ and every sequence $\{x_n\}_n$ of elements of $X$ such that $x_n \to x$,
\[
\liminf_{n \to \infty}\F_{\e_n}(x_n) \ge \F_0(x);
\]
\item[$(ii)$] \emph{limsup inequality}: for every $x \in X$, there is a sequence $\{x_n\}_n$ of elements of $X$ such that $x_n \to x$ and 
\[
\limsup_{n \to \infty}\F_{\e_n}(x_n) \le \F_0(x).
\]
\end{itemize}
A sequence $\{x_n\}_n$ as in $(ii)$ is called a \emph{recovery sequence} for $x$. Moreover, we say that the \emph{asymptotic development by $\Gamma$-convergence of order $k$} 
\[
\F_{\e} = \F_0 + \omega_1(\e)\F_1 + \dots + \omega_k(\e)\F_k
\]
holds if there are functions $\F_i \colon X \to \overline{\RR}$, $i = 0, \dots k$, such that $\F_{\e} \overset{\Gamma}{\to} \F_0$ and for $i \ge 1$
\[
\F_{\e}^{(i)} \coloneqq \left(\F_{\e}^{(i - 1)} - \inf\{\F_{i - 1}(x) : x \in X\}\right)\frac{\omega_{i - 1}(\e)}{\omega_{i}(\e)} \overset{\Gamma}{\to} \F_{i},
\]
where $\F_{\e}^{(0)} \coloneqq \F_{\e}$, $\omega_{0} \equiv 1$ and for $i \ge 1$, $\omega_i \colon \RR^+ \to \RR^+$ is a suitably chosen function such that both $\omega_i$ and $\omega_i/\omega_{i-1}$ converge to zero as $\e \to 0^+$. We remark that for $\omega_i(\e) \coloneqq \e^i$ one has the standard power series asymptotic expansion 
\[
\F_{\e} = \F_0 + \e\F_1 + \dots + \e^k\F_k.
\]
We refer to \cite{MR1202527} and \cite{MR1373088} for more informations on asymptotic expansions by $\Gamma$-convergence. 

The powerfulness of asymptotic expansions by $\Gamma$-convergence has been shown in the recent papers \cite{MR3385194}, \cite{MR3448931}, \cite{leonimurray2}, and \cite{MR3568052}, where the authors completely characterized the second order asymptotic expansion of the Modica-Mortola functional and used it to obtain new important results on the slow motion of interfaces for the mass-preserving Allen-Cahn equation and the Cahn-Hilliard equation in higher dimensions.

In this paper we consider the $\Gamma$-convergence of the functionals (\ref{functionals}) with respect to convergence in $L^2(\Omega)$, and thus we define $\F_{\e} \colon L^2(\Omega) \to (-\infty, \infty]$ via
\begin{equation}
\label{Fe}
\F_{\e}(v) \coloneqq 
\left\{
\arraycolsep=1.4pt\def\arraystretch{1.6}
\begin{array}{ll}
\displaystyle\int_{\Omega}\left(\frac{1}{2}|\nabla v|^2 + fv\right)\,d\bm{x} + \frac{1}{2\e}\int_{\Gamma_D}(v - g)^2\,d\mathcal{H}^1 & \text{ if } v \in H^1(\Omega), \\
+\infty & \text{ otherwise}.
\end{array}
\right.
\end{equation}

We begin by studying the $\Gamma$-convergence of order zero of (\ref{Fe}).

\begin{thm}[$0$th order $\Gamma$-convergence]
\label{0gc}
Let $\Omega \subset \RR^N$ be an open, bounded, connected set with Lipschitz continuous boundary, and let $\Gamma_D \subset \partial \Omega$ be non-empty and relatively open. Assume that $f \in L^2(\Omega)$ and $g \in H^{1/2}(\partial \Omega)$. Then the family of functionals $\{\F_{\e}\}_{\e}$ defined in $(\ref{Fe})$ $\Gamma$-converges in $L^2(\Omega)$ to the functional 
\begin{equation}
\label{F0}
\F_0(v) \coloneqq 
\left\{
\arraycolsep=1.4pt\def\arraystretch{1.6}
\begin{array}{ll}
\displaystyle\int_{\Omega}\left(\frac{1}{2}|\nabla v|^2 + fv\right)\,d\bm{x} & \text{ if } v \in V, \\
+\infty & \text{ otherwise},
\end{array}
\right.
\end{equation}
where 
\begin{equation}
\label{Vspace}
V \coloneqq \{v \in H^1(\Omega) : v = g \text{ on } \Gamma_D\}.
\end{equation}
\end{thm}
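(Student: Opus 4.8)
The plan is to verify separately the two conditions in the definition of $\Gamma$-convergence, with the liminf inequality carrying essentially all of the work and the recovery sequence being immediate.

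For the liminf inequality, I would fix $v \in L^2(\Omega)$ and a sequence $v_n \to v$ in $L^2(\Omega)$, assuming without loss of generality that $\liminf_n \F_{\e_n}(v_n) < +\infty$ (otherwise there is nothing to prove) and, after passing to a subsequence that is not relabeled, that this $\liminf$ is attained as a limit with every $\F_{\e_n}(v_n)$ finite, so that $v_n \in H^1(\Omega)$ for all $n$. Since $v_n \to v$ in $L^2(\Omega)$ the sequence is bounded in $L^2(\Omega)$, and hence $\big|\int_\Omega f v_n\,d\bm{x}\big| \le \|f\|_{L^2(\Omega)}\|v_n\|_{L^2(\Omega)}$ stays bounded; as the penalization term $\tfrac{1}{2\e_n}\int_{\Gamma_D}(v_n - g)^2\,d\mathcal{H}^{N-1}$ is nonnegative, the uniform bound on $\F_{\e_n}(v_n)$ forces $\int_\Omega |\nabla v_n|^2\,d\bm{x}$ to remain bounded. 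Consequently $\{v_n\}$ is bounded in $H^1(\Omega)$, and along a further subsequence $v_n \rightharpoonup v$ weakly in $H^1(\Omega)$, the weak limit agreeing with the strong $L^2$-limit.

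The decisive step is to check that $v \in V$. On the one hand, boundedness of $\F_{\e_n}(v_n)$ together with the previous bounds shows that $\tfrac{1}{2\e_n}\int_{\Gamma_D}(v_n-g)^2\,d\mathcal{H}^{N-1}$ is bounded, hence $\int_{\Gamma_D}(v_n-g)^2\,d\mathcal{H}^{N-1} \to 0$ and the traces of $v_n$ converge to $g$ in $L^2(\Gamma_D)$. On the other hand, since $\partial\Omega$ is Lipschitz the trace operator $H^1(\Omega) \to L^2(\partial\Omega)$ is compact, so weak $H^1$-convergence $v_n \rightharpoonup v$ yields strong convergence of the traces of $v_n$ to the trace of $v$ in $L^2(\partial\Omega)$, in particular in $L^2(\Gamma_D)$. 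Uniqueness of the limit in $L^2(\Gamma_D)$ then gives $v = g$ on $\Gamma_D$, that is $v \in V$. Dropping the nonnegative boundary term and combining the weak lower semicontinuity of $v \mapsto \int_\Omega |\nabla v|^2\,d\bm{x}$ with the convergence $\int_\Omega f v_n\,d\bm{x} \to \int_\Omega f v\,d\bm{x}$ would then yield
\[
\liminf_{n \to \infty}\F_{\e_n}(v_n) \ge \int_\Omega \Big(\tfrac12 |\nabla v|^2 + f v\Big)\,d\bm{x} = \F_0(v).
\]

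For the limsup inequality the recovery sequence is trivial: if $\F_0(v) = +\infty$ the constant sequence $v_n \equiv v$ suffices, while if $\F_0(v) < +\infty$ then $v \in V$, so its trace equals $g$ on $\Gamma_D$, the penalization term vanishes identically, and $v_n \equiv v$ gives $\F_{\e_n}(v) = \F_0(v)$ for every $n$, whence $\limsup_n \F_{\e_n}(v_n) = \F_0(v)$. The main obstacle is the boundary-condition step: it is precisely the compactness of the trace operator on Lipschitz domains that lets the penalization transmit the Dirichlet datum to the weak limit, and some care is needed to reconcile the two different senses in which the boundary values of the $v_n$ converge to each other.
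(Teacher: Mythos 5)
Your proof is correct, and it follows the same direct-method skeleton as the paper: pass to a subsequence along which the liminf is a finite limit, extract a weakly convergent subsequence in $H^1(\Omega)$, identify the trace of the limit as $g$ on $\Gamma_D$, conclude by weak lower semicontinuity of the Dirichlet term, and take the constant sequence as a recovery sequence. The genuine difference lies in how you obtain compactness. The paper routes the liminf inequality through a standalone compactness result (\Cref{compactness}) whose proof needs the Poincar\'e-type norm equivalence of \Cref{equivnorm} (with the boundary term on $\Gamma_D$), precisely because there the sequence is only assumed energy-bounded and no $L^2(\Omega)$ bound is available a priori; that lemma also yields the extra information that $\e_n^{-1/2}(u_n - g)$ is bounded in $L^2(\Gamma_D)$. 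You instead exploit the hypothesis $v_n \to v$ in $L^2(\Omega)$ built into the liminf inequality, which bounds $\int_\Omega f v_n\,d\bm{x}$ directly and makes the gradient bound immediate, so \Cref{equivnorm} is never invoked --- a more elementary argument for the $\Gamma$-convergence statement itself. What your route does not deliver is the reusable compactness proposition: \Cref{compactness} is applied again in \Cref{convofmin} to sequences with no a priori $L^2$ bound, so within the paper's architecture the equivalent-norm lemma cannot be dispensed with. A further small difference in your favor is that you make the trace-identification step explicit (penalization forces $v_n \to g$ in $L^2(\Gamma_D)$, while compactness of the trace operator on bounded Lipschitz domains gives convergence of the traces of $v_n$ to the trace of $v$), whereas the paper compresses this into the closing remark of the proof of \Cref{compactness}; note that mere weak continuity of the trace operator under weak $H^1$ convergence would already suffice for this step, so compactness of the trace, though true, is more than you need.
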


Since the first asymptotic development by $\Gamma$-convergence of (\ref{Fe}) strongly relies on \Cref{H2dec}, in what follows we assume $N = 2$. We begin with a compactness result.
\begin{thm}[Compactness]
\label{1gc-cpt}
Let $N = 2$, $\Omega$ be as in \Cref{H2dec}, $f \in L^2(\Omega)$, $g \in H^{3/2}(\partial \Omega)$, $\F_{\e}$ and $\F_0$ be the functionals defined in $(\ref{Fe})$ and $(\ref{F0})$, respectively, and define
\begin{equation}
\label{Fe1}
\F_{\e}^{(1)} \coloneqq \frac{\F_{\e} - \min \F_0}{\e|\log \e|}.
\end{equation}
If $\e_n \to 0^+$ and $v_n \in L^2(\Omega)$ are such that 
\[
\sup \{\F_{\e_n}^{(1)}(v_n) : n \in \NN\} < \infty,
\]
then there exist a subsequence $\{v_{n_k}\}_k$ of $\{v_n\}_n$, $r_0 \in H^1(\Omega)$ and $v_0 \in L^2(\Gamma_D)$ such that
\begin{align}
\frac{v_{n_k} - u_0}{\sqrt{\e_{n_k}|\log \e_{n_k}|}} \rightharpoonup &\  r_0 \quad \text{ in } H^1(\Omega),  \label{1cpt1}\\
\frac{v_{n_k} - u_0}{\e_{n_k}\sqrt{|\log \e_{n_k}|}} \rightharpoonup &\ v_0 \quad \text{ in } L^2(\Gamma_D), \label{1cpt2}
\end{align}
where $u_0$ is the solution to $(\ref{P0})$.
\end{thm}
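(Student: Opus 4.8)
The plan is to reduce the whole statement to a single coercivity estimate,
\[
\|\nabla(v_n - u_0)\|_{L^2(\Omega)}^2 + \tfrac{1}{\e_n}\|v_n - u_0\|_{L^2(\Gamma_D)}^2 \le C\,\e_n|\log\e_n|,
\]
from which (\ref{1cpt1}) and (\ref{1cpt2}) follow by dividing by the appropriate power of $\e_n|\log\e_n|$ and extracting weakly convergent subsequences. Set $w_n := v_n - u_0$, and note that $w_n = v_n - g$ on $\Gamma_D$ since $u_0$ belongs to the space $V$ of (\ref{Vspace}). The first step is to unwind the energy bound: since $u_0$ is the unique minimizer of $\F_0$ and solves (\ref{P0}) weakly, expanding $\F_{\e_n}(v_n)$ around $u_0$ gives
\[
\F_{\e_n}(v_n) - \min\F_0 = \tfrac12\|\nabla w_n\|_{L^2(\Omega)}^2 + \tfrac{1}{2\e_n}\|w_n\|_{L^2(\Gamma_D)}^2 + R_n,
\]
where $R_n := \int_\Omega \nabla u_0 \cdot \nabla w_n\,d\bm{x} + \int_\Omega f w_n\,d\bm{x}$. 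Because $w_n$ does not vanish on $\Gamma_D$, the term $R_n$ does not drop out of the weak formulation; instead, using $\Delta u_0 = f$, $\partial_\nu u_0 = 0$ on $\Gamma_N$, and the decomposition $u_0 = u_{\reg} + \sum_i c_i S_i$ of \Cref{H2dec}, one identifies $R_n$ with the boundary pairing $\int_{\Gamma_D}\partial_\nu u_0\, w_n\,d\mathcal{H}^1$ (excising small half-disks around $\bm{x}_1,\bm{x}_2$ and passing to the limit, which is legitimate because $S_i \in H^1$ forces the excised arc integrals to vanish). The hypothesis $\sup_n \F_{\e_n}^{(1)}(v_n) = M < \infty$ then reads $\tfrac12\|\nabla w_n\|^2 + \tfrac{1}{2\e_n}\|w_n\|_{L^2(\Gamma_D)}^2 \le M\e_n|\log\e_n| + |R_n|$, so everything hinges on bounding $R_n$ by a small multiple of the left-hand side plus a term of order $\e_n|\log\e_n|$.

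The main obstacle is precisely this cross term. The normal derivative $\partial_\nu u_0$ is \emph{not} in $L^2(\Gamma_D)$: a direct computation from $\bar S_i = r_i^{1/2}\sin(\theta_i/2)$ shows that $\partial_\nu S_i$ behaves like $r_i^{-1/2}$ near each corner, so the naive Cauchy--Schwarz estimate diverges; this is exactly the mechanism that forces the $|\log\e|$ scaling. The regular part is harmless, since $\partial_\nu u_{\reg} \in L^2(\partial\Omega)$ and Young's inequality absorbs its contribution into $\tfrac{1}{8\e_n}\|w_n\|_{L^2(\Gamma_D)}^2$ plus a term of order $\e_n$. For each singular part I would estimate $\int_0^\rho r^{-1/2}|w_n(r,0)|\,dr$ by splitting at the scale $r = \e_n$.

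On the outer region $(\e_n,\rho)$, Cauchy--Schwarz with the weight $1/r$ gives $\big(\int_{\e_n}^\rho r^{-1}dr\big)^{1/2}\|w_n\|_{L^2(\Gamma_D)} \le C|\log\e_n|^{1/2}\|w_n\|_{L^2(\Gamma_D)}$, and Young's inequality turns this into $\tfrac{1}{8\e_n}\|w_n\|_{L^2(\Gamma_D)}^2 + C\e_n|\log\e_n|$; this is where the logarithm is produced and simultaneously tamed. On the inner region $(0,\e_n)$ the weight integrates to the \emph{small} quantity $\int_0^{\e_n}r^{-1/2}dr = 2\e_n^{1/2}$, so only a crude control of $w_n$ near the corner is needed. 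Decomposing $w_n(r,0)$ into its angular average over the half-circle of radius $r$ plus an oscillation, the oscillation is bounded by $\int_0^\pi|\partial_\theta w_n|\,d\theta \le r\,(\pi\int_0^\pi|\nabla w_n|^2 d\theta)^{1/2}$, while the average is controlled, through the fundamental theorem of calculus in $r$ and a bulk reference average on $\{\rho/2 < r < \rho\}$, by $\|\nabla w_n\|_{L^2(\Omega)}$ and $\|w_n\|_{L^2(\Omega)}$. Multiplying by $\e_n^{1/2}$ and using Young with a small fixed parameter shows the inner contribution is bounded by $\tfrac18\|\nabla w_n\|^2$ plus terms of the form $\e_n B_n$ and $\e_n|\log\e_n|$, where the $L^2(\Omega)$ norm is converted back to the left-hand side by the Poincaré inequality $\|w_n\|_{L^2(\Omega)} \le C(\|\nabla w_n\|_{L^2(\Omega)} + \|w_n\|_{L^2(\Gamma_D)})$, valid because $\Gamma_D$ has positive $\mathcal{H}^1$-measure.

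Collecting these estimates and choosing the Young constants small enough, the $\tfrac18\|\nabla w_n\|^2$-type and $\tfrac{1}{8\e_n}\|w_n\|_{L^2(\Gamma_D)}^2$-type terms are absorbed into the left-hand side, yielding the displayed coercivity estimate. Finally, dividing by $\e_n|\log\e_n|$ and invoking Poincaré once more shows that $w_n/\sqrt{\e_n|\log\e_n|}$ is bounded in $H^1(\Omega)$, while dividing the boundary term by $\e_n^2|\log\e_n|$ shows that $w_n/(\e_n\sqrt{|\log\e_n|})$ is bounded in $L^2(\Gamma_D)$; a diagonal subsequence then produces the weak limits $r_0 \in H^1(\Omega)$ and $v_0 \in L^2(\Gamma_D)$ of (\ref{1cpt1})--(\ref{1cpt2}). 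I expect the only genuinely delicate point to be the rigorous justification and dyadic splitting of the singular boundary pairing $R_n$; the remaining steps are routine absorption and weak-compactness arguments.
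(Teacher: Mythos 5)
Your proposal is correct in substance and its skeleton coincides with the paper's proof of \Cref{1gc-cpt}: expand $\F_{\e_n}$ around $u_0$ (the paper normalizes first, writing $v_n = u_0 + \e_n\sqrt{|\log \e_n|}\,z_n$, which is only bookkeeping), identify the cross term with a boundary pairing via the decomposition of \Cref{H2dec} (this is exactly \Cref{IBP}), split the singular integral $\int_0^{\rho}r^{-1/2}|w_n(r,0)|\,dr$ at $r = \e_n$, produce the logarithm on the outer region by Cauchy--Schwarz against the weight $1/r$, and absorb by Young. The genuinely different ingredient is your treatment of the inner region $(0,\e_n)$: the paper invokes the scale-invariant trace--Hardy estimate of \Cref{lemmacpt}, derived from the Machihara--Ozawa--Wadade inequality (\Cref{hardyonballs}), whereas you use an elementary average-plus-oscillation argument anchored at the bulk annulus $\{\rho/2 < r < \rho\}$. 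Carried out carefully (oscillation: $\int_0^{\e_n} r^{1/2}\bigl(\int_0^{\pi}|\nabla w_n|^2\,d\theta\bigr)^{1/2}dr \le \sqrt{\pi\e_n}\,\|\nabla w_n\|_{L^2(B^+_{\e_n}(\bm{x}_i))}$; radial transport of the angular average: a factor $\sqrt{\log(\rho/r)}$ times $\|\nabla w_n\|_{L^2}$), your route yields $\int_0^{\e_n}r^{-1/2}|w_n(r,0)|\,dr \lesssim \sqrt{\e_n|\log\e_n|}\,\|\nabla w_n\|_{L^2(\Omega;\RR^2)} + \sqrt{\e_n}\,\|w_n\|_{L^2(\Omega)}$, which is lossier than \Cref{lemmacpt} by the factor $\sqrt{|\log\e_n|}$ but is harmless at the target scale $\e_n|\log\e_n|$, after converting $\|w_n\|_{L^2(\Omega)}$ via \Cref{equivnorm}. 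What the paper's lemma buys is precisely the absence of that loss together with scale invariance: in the second-order compactness (\Cref{2gc-cpt}) the same inner term must be controlled with $O(1)$ precision after rescaling by $\e_n$, and a $\sqrt{|\log\e_n|}$ loss would destroy the bound there; your elementary argument suffices at first order and avoids \Cref{hardyonballs} altogether, but does not replace \Cref{lemmacpt} at second order.

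One step is under-justified: the identification of $R_n$ with the boundary pairing by excising half-disks. The parenthetical ``legitimate because $S_i \in H^1$ forces the excised arc integrals to vanish'' is too quick. Integrability of $\nabla u_0 \cdot \nabla w_n$ only says that $\delta \mapsto \int_{\partial B_{\delta}(\bm{x}_i) \cap \Omega}|\nabla u_0||w_n|\,d\mathcal{H}^1$ is in $L^1(0,\rho)$, and an integrable function of $\delta$ need not tend to zero along any sequence (constants are integrable on $(0,\rho)$). Concretely, with $|\nabla S_i| \sim \delta^{-1/2}$ the arc term is bounded by $C\bigl(\delta\int_0^{\pi}\bar{w}_n(\delta,\theta)^2\,d\theta\bigr)^{1/2}$, and forcing this to vanish along a sequence $\delta_j \to 0^+$ requires more than $w_n \in L^2$: either an averaging/Hardy argument to select good radii (the inequality of \Cref{hardyonballs} does the job, since $\delta \int_0^\pi \bar w_n(\delta,\cdot)^2\,d\theta \ge c > 0$ for all small $\delta$ would contradict $\int_{B^+_\rho} w_n^2\,|\bm{x}|^{-2}(1+\log\rho - \log|\bm{x}|)^{-2}\,d\bm{x} < \infty$), or, more cleanly, proving the identity first for $w_n \in C^{\infty}(\overline{\Omega})$ (where the arc terms vanish trivially) and extending by density, using that both sides are $H^1$-continuous --- continuity of the singular term being exactly the inner-plus-outer trace estimate you already established. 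The paper sidesteps this entirely in \Cref{IBP} via a fundamental-theorem-of-calculus argument in polar coordinates for $W^{1,1}$ functions. This is a fixable gap, not a wrong approach; the remaining steps (Young absorption, the coercivity estimate, and the extraction of the weak limits in $(\ref{1cpt1})$--$(\ref{1cpt2})$ via \Cref{equivnorm}) all check out.
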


\begin{thm}[$1$st order $\Gamma$-convergence]
\label{1gc}
Under the assumptions of \Cref{1gc-cpt}, the family $\{\F_{\e}^{(1)}\}_{\e}$ $\Gamma$-converges in $L^2(\Omega)$ to the functional 
\begin{equation}
\label{F1}
\F_1(v) \coloneqq 
\left\{
\arraycolsep=1.4pt\def\arraystretch{1.6}
\begin{array}{ll}
-\displaystyle \frac{1}{8}\sum_{i = 1}^2c_i^2 & \text{ if } v = u_0, \\
+\infty & \text{ otherwise},
\end{array}
\right.
\end{equation}
where the coefficients $c_i = c_i(u_0)$ are as in \Cref{H2dec}. In particular, if $u_{\e} \in H^1(\Omega)$ is a solution to $(\ref{Pe})$, then 
\begin{equation}
\label{Fe1Tay}
\F_{\e}(u_{\e}) = \F_0(u_0) + \e |\log \e|\F_1(u_0) +  o\left(\e |\log \e|\right).
\end{equation}
\end{thm}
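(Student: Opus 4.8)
The plan is to exploit the quadratic structure of $\F_\e$ together with the decomposition of $u_0$ provided by \Cref{H2dec}. Writing $w := v - u_0$ and using that $u_0$ solves $(\ref{P0})$ (so that $\Delta u_0 = f$ in $\Omega$, $\partial_\nu u_0 = 0$ on $\Gamma_N$, and $u_0 = g$ on $\Gamma_D$), an integration by parts yields, for every $v \in H^1(\Omega)$,
\begin{equation}
\label{expansionproposal}
\F_\e(v) - \min\F_0 = \int_{\Gamma_D}\partial_\nu u_0\, w\,d\mathcal{H}^1 + \frac{1}{2}\int_\Omega|\nabla w|^2\,d\bm{x} + \frac{1}{2\e}\int_{\Gamma_D}w^2\,d\mathcal{H}^1 =: T_\e[w],
\end{equation}
where $\min\F_0 = \F_0(u_0)$. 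From $u_0 = u_{\reg} + \sum_i c_i S_i$ one gets $\partial_\nu u_0 = \partial_\nu u_{\reg} + \sum_i \partial_\nu(c_i S_i)$, where $\partial_\nu u_{\reg} \in L^2(\Gamma_D)$, while in the polar coordinates centered at $\bm{x}_i$ the term $\partial_\nu(c_i S_i)$ behaves like $\tfrac{c_i}{2}r_i^{-1/2}$ near the corner and therefore fails to be square integrable on $\Gamma_D$. This non-integrable singularity, balanced against the penalization $\tfrac{1}{2\e}\int_{\Gamma_D}w^2$, is the source of both the scale $\e|\log\e|$ and the constant $\tfrac18$.

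For the limsup inequality it suffices to produce a recovery sequence at $v = u_0$. Near each $\bm{x}_i$ I would take $w_\e$ to mimic the formal pointwise minimizer $w = -\e\,\partial_\nu u_0 \approx -\tfrac{c_i\e}{2}r_i^{-1/2}$ of the boundary contribution in $(\ref{expansionproposal})$, suitably extended into $\Omega$ and truncated at the scale $r_i \sim \e$ by a radial cutoff, and $w_\e \equiv 0$ away from the corners. Completing the square shows that on $\Gamma_D$ the linear and penalty terms combine to $-\tfrac{\e}{2}(\partial_\nu u_0)^2$ integrated over $\{r_i > \e\}$, which by the logarithmic divergence of $\int r_i^{-1}\,dr_i$ between the scales $\e$ and $\rho$ equals $-\tfrac{\e|\log\e|}{8}\sum_i c_i^2 + O(\e)$; a direct estimate shows that the Dirichlet energy of this profile, together with the contribution of the regular part $\partial_\nu u_{\reg}$, is only $O(\e) = o(\e|\log\e|)$. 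Dividing by $\e|\log\e|$ gives $\F_\e^{(1)}(u_0 + w_\e) \to -\tfrac18\sum_i c_i^2 = \F_1(u_0)$.

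For the liminf inequality I would argue as follows. If $v \ne u_0$, then either $v \notin V$ or $\F_0(v) > \min\F_0$, and in both cases the zeroth order result \Cref{0gc} forces $\F_{\e_n}^{(1)}(v_n) \to +\infty = \F_1(v)$ along any $v_n \to v$. For $v = u_0$ I would use that $\F_\e^{(1)}$ is an increasing affine function of $\F_\e$, so that its minimizer is the minimizer $u_\e$ of $\F_\e$; consequently $\F_{\e_n}^{(1)}(v_n) \ge \F_{\e_n}^{(1)}(u_{\e_n})$ for \emph{every} sequence, and both the liminf inequality at $u_0$ and the development $(\ref{Fe1Tay})$ reduce to the sharp lower bound
\[
\min_{w\in H^1(\Omega)}T_\e[w]\ \ge\ -\tfrac{\e|\log\e|}{8}\sum_{i=1}^2 c_i^2 + o(\e|\log\e|).
\]
Since $T_\e$ is convex and quadratic, this is an upper bound, with sharp constant, on the weighted trace quantity $\sup_w \big(\int_{\Gamma_D}\partial_\nu u_0\, w\big)^2 / \big(\|\nabla w\|_{L^2(\Omega)}^2 + \e^{-1}\|w\|_{L^2(\Gamma_D)}^2\big)$. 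I would prove it by localizing near each corner, replacing $\partial_\nu u_0$ by $\tfrac{c_i}{2}r_i^{-1/2}$ up to an $L^2(\Gamma_D)$ error (absorbed by the penalty via Young's inequality), and estimating the resulting one-dimensional pairing by a weighted Cauchy--Schwarz inequality split at the scale $r_i \sim \e$: on $\{r_i > \e\}$ the penalty controls the pairing and produces precisely the factor $\tfrac{\e|\log\e|}{4}c_i^2$ in the numerator, while on $\{r_i < \e\}$ the Dirichlet energy must be invoked to show the contribution is $o(\e|\log\e|)$.

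The main obstacle is exactly this sharp corner estimate. The difficulty is that $\partial_\nu u_0 \notin L^2(\Gamma_D)$, so $\int_{\Gamma_D}\partial_\nu u_0\, w$ cannot be controlled by the penalty alone (a plain Cauchy--Schwarz returns $+\infty$), whereas the trace of $w$ on the singular set is coupled to $\|\nabla w\|_{L^2(\Omega)}$ only through a nonlocal, non-sharp trace inequality; moreover a naive Hardy-type control of $\int_0^\e w(r_i,0)^2 r_i^{-1}\,dr_i$ is false for functions that do not vanish at the corner. Reconciling these competing effects so as to recover the \emph{exact} constant $\tfrac18$, rather than merely the correct order $\e|\log\e|$, is where a careful blow-up at scale $\e$ and a precise matching of the logarithmic divergence are required. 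Once the two-sided bounds on $\F_\e^{(1)}(u_\e)$ are established, combining them yields both the $\Gamma$-limit $\F_1$ and the asymptotic development $(\ref{Fe1Tay})$.
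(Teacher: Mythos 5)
Your outline is essentially the paper's own proof. The expansion $T_\e[w]$ is made rigorous in \Cref{IBP} (where, exactly as you anticipate, the pairing with $\partial_\nu u_0$ is defined through the decomposition of \Cref{H2dec}, the singular part contributing $-\sum_i \frac{c_i}{2}\int_0^\rho \bar\varphi(r_i) r_i^{-1/2}\bar\psi^{(i)}(r_i,0)\,dr_i$; your sign $-\frac{c_i\e}{2}r_i^{-1/2}$ for the formal minimizer is just the opposite convention and immaterial, since only the square enters). Your recovery sequence is the paper's, see (\ref{zetain})--(\ref{recFe1}): the profile $\frac{c_i}{2\sqrt{|\log\e_n|}}r_i^{-1/2}$ cut off below scale $\e_n$ and above scale $\rho$, with Dirichlet energy $O(\e)$ as you compute, and the constant $\frac18$ extracted on $\{r_i>\e\}$ by completing the square against the penalty and using $\int_{\e}^{\rho}r^{-1}\,dr = |\log\e| + O(1)$, as in (\ref{gammalininf1}) and (\ref{limsupFen1}). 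Your reduction of the liminf at $u_0$ (and of (\ref{Fe1Tay})) to a lower bound on $\min_w T_\e[w]$, via the fact that $\F_\e^{(1)}$ is a positive affine rescaling of $\F_\e$, is a legitimate shortcut precisely because $\F_1$ is finite only at $u_0$; it forgoes the sequence-level compactness (\ref{1cpt1})--(\ref{1cpt2}) of \Cref{1gc-cpt}, which the paper values in its own right, but that is not needed for \Cref{1gc} alone.

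The genuine gap is the one you flag yourself and leave open: the inner region $\{r_i<\e\}$. Your weighted Cauchy--Schwarz fails in both natural splittings there ($\int_0^\e r^{-1}\,dr$ diverges, and $\int_0^\e r^{-1}w(r,0)^2\,dr$ is not controlled for traces with nonzero corner value, as you correctly observe), so the claim that this contribution is $o(\e|\log\e|)$ is asserted, not proved. The paper closes exactly this hole with the genuinely two-dimensional trace estimate of \Cref{lemmacpt}: for every $R>0$ and $h \in H^1(B_R^+(\bm{0}))$,
\[
\int_0^R x^{-1/2}|h(x,0)|\,dx \le \kappa \left(R\int_{B_R^+(\bm{0})}|\nabla h|^2\,d\bm{x}\right)^{1/2} + \kappa \left(\int_0^R h(x,0)^2\,dx\right)^{1/2},
\]
applied with $R = \e_n$ to $z_n = (v_n - u_0)/(\e_n\sqrt{|\log\e_n|})$ as in (\ref{RN1cpt}): after the normalization, the resulting cross terms carry a prefactor $1/\sqrt{|\log\e_n|}$ against the good quadratic terms $\frac{\e_n}{2}\|\nabla z_n\|^2_{L^2(\Omega;\RR^2)}$ and $\frac12\|z_n\|^2_{L^2(\Gamma_D)}$, hence contribute $o(1)$ to $\F^{(1)}_{\e_n}$ --- the scale-invariant factor $R$ in front of the Dirichlet energy is what makes the absorption work at the sharp order. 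This inequality is deduced (\Cref{hardy}) from the Machihara--Ozawa--Wadade Hardy inequality with logarithmic weight, \Cref{hardyonballs}, whose logarithmic correction and boundary $L^2$ term are precisely what make it valid for $H^1$ functions that do \emph{not} vanish at the corner, i.e., the obstruction you identified as defeating a naive Hardy bound. Note also that, contrary to your closing remark, no blow-up at scale $\e$ is needed at first order --- the lemma alone suffices, and blow-up arguments enter only in the second order analysis (\Cref{2gc-cpt}). Without \Cref{lemmacpt} or an equivalent, your lower bound, and with it both the liminf inequality at $u_0$ and (\ref{Fe1Tay}), remains unproven; everything else in your outline matches the paper.
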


To characterize the second order asymptotic development by $\Gamma$-convergence of the family of functionals $\{\F_{\e}\}_{\e}$, we  introduce the auxiliary functional 
\begin{equation}
\begin{aligned}
\label{Ji}
\J_i(w) \coloneqq &\ \int_{\RR^2_+}|\nabla w(\bm{x})|^2\,d\bm{x} + \int_0^1\left(w(x,0)^2 - c_ix^{-1/2}w(x,0)\right)\,dx \\
&\ \quad + \int_1^{\infty}\left(w(x,0) - \frac{c_i}{2}x^{-1/2}\right)^2\,dx
\end{aligned}
\end{equation}
defined in
\begin{equation}
\label{spaceH}
H \coloneqq \{w \in H^1_{\loc}(\RR^2_+) : w \in H^1(B_R^+(\bm{0})) \text{ for every } R > 0\},
\end{equation}
where $w(\cdot,0)$ indicates the trace of $w$ on the positive real axis. Let\footnote{In what follows, given a function $v = v(\bm{x})$, we denote by $\bar{v}^{i}$ the function $\bar{v}^{(i)}(r_i,\theta_i) \coloneqq v(\bm{x}_i + r_i(\cos \theta_i, \sin \theta_i))$, for polar coordinates $(r_i,\theta_i)$ given as in \Cref{H2dec}.} 
\begin{align}
A_i \coloneqq &\ \inf\{\J_i(w) : w \in H\}, \label{A_i} \\
B_i \coloneqq &\ \frac{1}{2}\int_0^{\rho}\bar{\varphi}(r_i)r_i^{-1/2}\overline{\partial_{\nu}u_{\reg}^0}^{(i)}(r_i,0)\,dr_i, \label{B_i} \\
C_{\varphi} \coloneqq &\ \frac{1}{8}\int_{\rho/2}^1\left(1 - \bar{\varphi}(x)^2\right)x^{-1}\,dx, \label{C_i} \\
\bar{\psi}_i(r_i) \coloneqq &\ \frac{1}{2}\bar{\varphi}(r_i)r_i^{-1/2}. \label{psi_ie}
\end{align}
As shown in \Cref{auxVP}, there exists $w_i \in H$ such that $\J_{i}(w_i) = A_i$, and thus $w_i$ satisfies 
\begin{equation}
\label{aux}
\left\{
\arraycolsep=1.4pt\def\arraystretch{1.6}
\begin{array}{rll}
\Delta w_i =  & 0 & \text{ in } \RR^2_+, \\
\partial_{\nu} w_i = & 0 & \text{ on } (-\infty, 0) \times \{0\}, \\
\partial_{\nu} w_i + w_i = & \frac{c_i}{2}x^{-1/2} & \text{ on } (0,\infty) \times \{0\}.
\end{array}
\right.
\end{equation}
Observe that if $c_i = 0$ then $\J_i \ge 0$ and so $w_i = 0$ and $A_i = 0$. Finally, let $u_1 \in H^1(\Omega)$ be the solutions to the Dirichlet-Neumann problem 
\begin{equation}
\label{mixedu1}
\left\{
\arraycolsep=1.4pt\def\arraystretch{1.6}
\begin{array}{rll}
\Delta u_1 =  & 0 & \text{ in } \Omega, \\
\partial_{\nu}u_1 = & 0 & \text{ on } \Gamma_N, \\
u_1 = & -\partial_{\nu}u_{\reg}^0 & \text{ on } \Gamma_D.
\end{array}
\right.
\end{equation}

\begin{thm}[Compactness]
\label{2gc-cpt}
Let $N = 2$, $\Omega$ be as in \Cref{H2dec}, $f \in L^2(\Omega)$, $g \in H^{3/2}(\partial \Omega)$, $\F_{\e}$, $\F_0$, $\F_{\e}^{(1)}$, $\F_1$, $\J_i$ be as in $(\ref{Fe})$, $(\ref{F0})$, $(\ref{Fe1})$, $(\ref{F1})$, and $(\ref{Ji})$, respectively, and define
\begin{equation}
\label{Fe2}
\F_{\e}^{(2)} \coloneqq \frac{\F_{\e}^{(1)} - \min \F_1}{1/|\log \e|} = \frac{\F_{\e} - \min \F_0}{\e} - |\log \e|\min \F_1.
\end{equation}
If $\e_n \to 0^+$, $w_n \in L^2(\Omega)$ are such that 
\[
\sup \{\F_{\e_n}^{(2)}(w_n) : n \in \NN\} < \infty,
\]
and $W_{i,n} \in H$ is defined as 
\begin{equation}
\label{barw}
\bar{W}_{i,n}(r_i,\theta_i) \coloneqq \bar{\varphi}(r_i\e_n)\frac{\bar{w}_n^{(i)}(r_i\e_n,\theta_i) - \bar{u}^{(i)}_0(r_i\e_n,\theta_i) - \e_n\bar{u}^{(i)}_1(r_i\e_n, \theta_i)}{\sqrt{\e_n}}
\end{equation}
for $(r_i,\theta_i)$ polar coordinates as in \Cref{H2dec}, then there exist a subsequence $\{w_{n_k}\}_k$ of $\{w_n\}_n$, $w_0 \in H^1(\Omega)$ and $q_0 \in L^2_{\loc}(\Gamma_D)$ such that
\begin{align}
\frac{w_{n_k} - u_0 - \e_{n_k}u_1}{\sqrt{\e_{n_k}}} \rightharpoonup &\  w_0 \quad \text{ in } H^1(\Omega),  \label{2cpt1}\\
\frac{w_{n_k} - u_0}{\e_{n_k}} - u_1 - \sum_{i = 1}^2c_i \psi_i [1 - \chi_{B_{\e_{n_k}}(\bm{x}_i)}] \rightharpoonup &\ q_0 - \sum_{i = 1}^2c_i \psi_i \quad \, \text{ in } L^2(\Gamma_D),\label{2cpt2}
\end{align}
where $\psi_i$ is the function given in polar coordinates by $(\ref{psi_ie})$ and $u_1$ is the solution to $(\ref{mixedu1})$. Furthermore, for every $R > 0$,
\begin{equation}
\label{barw1}
W_{i,n_k} \rightharpoonup W_i \quad \text{ in } H^1(B_R^+(\bm{0})), \quad  \nabla W_{i,n_k} \rightharpoonup \nabla W_i \quad \text{ in } L^2(\RR^2_+;\RR^2)),\\
\end{equation}
\begin{align}
W_{i,n_k}(\cdot, 0) \rightharpoonup &\ W_i(\cdot, 0) \quad \quad \quad \quad \quad \ \, \text{ in } L^2((0,1) \times \{0\}), \label{barw2}\\
W_{i,n_k}(\cdot, 0) - \frac{c_i}{2}x^{-1/2} \rightharpoonup &\ W_i(\cdot, 0) - \frac{c_i}{2}x^{-1/2} \quad \text{ in } L^2((1,\infty) \times \{0\}), \label{barw3}
\end{align}
for some $W_i \in H$ such that $\J_i(W_i) < \infty$, where $W_{i,n_k}(\cdot,0)$ and $W_i(\cdot,0)$ indicate the trace of $W_{i,n_k}$ and $W_i$ on the positive real axis.
\end{thm}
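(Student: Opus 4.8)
The plan is to extract every stated convergence from the single bound $\sup_n\F^{(2)}_{\e_n}(w_n)<\infty$ by expanding $\F_{\e_n}$ to second order about the two-term profile $u_0+\e_n u_1$ and then blowing up at the corners $\bm{x}_1,\bm{x}_2$. Since $\F^{(2)}_{\e_n}=|\log\e_n|\bigl(\F^{(1)}_{\e_n}-\min\F_1\bigr)$, the hypothesis forces $\F^{(1)}_{\e_n}(w_n)\to\min\F_1$ and in particular $\sup_n\F^{(1)}_{\e_n}(w_n)<\infty$; thus \Cref{1gc-cpt} applies and, after passing to a subsequence, yields $w_{n_k}\to u_0$ in $L^2(\Omega)$. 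This fixes the base point of the expansion and shows that $z_{n_k}:=w_{n_k}-u_0-\e_{n_k}u_1\to 0$ in $L^2(\Omega)$.

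I would then substitute $w_n=u_0+\e_n u_1+z_n$ into \eqref{Fe} and expand. Using $\Delta u_0=f$ and $\Delta u_1=0$ together with the boundary conditions in \eqref{P0} and \eqref{mixedu1}, an integration by parts cancels all terms linear in $z_n$ except for boundary contributions on $\Gamma_D$ that pair $z_n$ against $\partial_\nu u_0=\partial_\nu u^0_{\reg}+\sum_i c_i\,\partial_\nu S_i$. The regular part $\partial_\nu u^0_{\reg}$ is exactly the datum defining $u_1$ and is absorbed, whereas the singular part, which behaves like $\sum_i c_i\psi_i$ with $\psi_i$ as in \eqref{psi_ie}, survives and is the source of the linear term $-c_i x^{-1/2}w$ in \eqref{Ji}. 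After dividing by $\e_n$ and adding the compensator $\tfrac18|\log\e_n|\sum_i c_i^2$ built into \eqref{Fe2}, $\F^{(2)}_{\e_n}(w_n)$ becomes $\tfrac{1}{2\e_n}\int_\Omega|\nabla z_n|^2$ plus a renormalized boundary energy concentrated in $B_\rho(\bm{x}_i)$, plus remainders that I would estimate to be $o(1)$ using $g\in H^{3/2}$ and the $H^2$-regularity of $u^0_{\reg}$ from \Cref{H2dec}.

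The heart of the matter is the corner blow-up. Rescaling $\bm{x}=\bm{x}_i+\e_n\bm{y}$ in \eqref{barw}, the change of variables sends $\tfrac{1}{2\e_n}\int|\nabla z_n|^2$ over $B_\rho(\bm{x}_i)$ to a multiple of $\int_{\RR^2_+}|\nabla W_{i,n}|^2$, and the rescaled singular profile is precisely $\tfrac{c_i}{2}x^{-1/2}$; splitting the rescaled axis at $x=1$ reproduces the two boundary integrals of $\J_i$, while the boundary penalty carries a logarithmic divergence $\tfrac{c_i^2}{8}|\log\e_n|$ per corner (after dividing by $\e_n$) coming from the $r^{-1/2}$ singularity, which is exactly cancelled by the compensator, up to the finite constant $C_\varphi$ of \eqref{C_i}. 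This yields a coercivity estimate of the form
\[
\frac{1}{2\e_n}\int_\Omega|\nabla z_n|^2\,d\bm{x}+\sum_{i=1}^2\J_i(W_{i,n})\le C\bigl(\F^{(2)}_{\e_n}(w_n)+1\bigr)+o(1)
\]
along the subsequence. From it all the convergences follow: the bound on $\e_n^{-1}\|\nabla z_n\|_{L^2}^2$, together with $z_n\to0$ in $L^2$ and the trace estimate $\|z_n\|_{L^2(\Gamma_D)}=o(\sqrt{\e_n})$ coming from the first-order energy, makes $z_n/\sqrt{\e_n}$ bounded in $H^1(\Omega)$ and gives \eqref{2cpt1} after extracting $w_0$; the bound on $\J_i(W_{i,n})$ controls $\|\nabla W_{i,n}\|_{L^2(\RR^2_+)}$ and the two boundary integrals, so a local Poincar\'e argument on each $B_R^+(\bm{0})$ produces a limit $W_i\in H$ with $\J_i(W_i)<\infty$ by weak lower semicontinuity, which is \eqref{barw1}--\eqref{barw3}; finally \eqref{2cpt2} is obtained by transferring these limits back to $\Gamma_D$, where away from the corners $(w_n-u_0)/\e_n-u_1$ converges because the profile is regular, while inside $B_{\e_n}(\bm{x}_i)$ the singular term $c_i\psi_i$ is excised by the indicator $[1-\chi_{B_{\e_n}(\bm{x}_i)}]$, leaving the limit $q_0-\sum_i c_i\psi_i$.

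The main obstacle is the blow-up step: one must handle the bulk gradient energy, the singular normal derivative $\partial_\nu S_i$, and the Robin penalty simultaneously, and show that every multiscale error --- in particular those produced by the cutoff $\bar{\varphi}(r_i\e_n)$, by the indicator $\chi_{B_{\e_n}(\bm{x}_i)}$, and by the singular part of the corrector $u_1$ itself --- is negligible after division by $\e_n$, so that the only surviving divergence is the $\tfrac18|\log\e_n|\sum_i c_i^2$ matched by \eqref{Fe2}.
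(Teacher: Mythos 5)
Your proposal follows the same route as the paper: expand about the two-term profile $u_0+\e_n u_1$, integrate by parts using the singular decomposition of \Cref{H2dec} (the paper packages this as \Cref{IBP}, which replaces the ill-defined pairing of the remainder with $\partial_\nu u_0$ by the exact boundary term $-\sum_i\frac{c_i}{2}\int_0^\rho\bar{\varphi}(r_i)r_i^{-1/2}\bar{z}_n^{(i)}(r_i,0)\,dr_i$), cancel the logarithmic divergence against the compensator $\frac{|\log\e_n|}{8}\sum_ic_i^2$ via $|\log\e_n|=\int_{\e_n}^1r^{-1}\,dr$, and blow up at the corners so that the rescaled quantities reproduce the pieces of $\J_i$. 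One structural difference: the paper's bookkeeping, (\ref{Fe2final}), is an exact identity obtained by completing squares --- the cross terms are precisely $B_{i,n}c_i+C_\varphi c_i^2$ --- so there are no $o(1)$ remainders to estimate at all; your version, which defers "multiscale errors" to unspecified $o(1)$ estimates, is workable in principle but leaves the hardest accounting undone.

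There is, however, a genuine gap at the coercivity step. You assert that the bound on $\J_i(W_{i,n})$ "controls $\|\nabla W_{i,n}\|_{L^2(\RR^2_+)}$ and the two boundary integrals," and you propose "a local Poincar\'e argument on each $B_R^+(\bm{0})$" to extract $W_i$. But $\J_i$ contains the indefinite linear term $-c_i\int_0^1x^{-1/2}W_{i,n}(x,0)\,dx$ (before rescaling, $-c_i\int_0^{\e_n}r^{-1/2}\e_n^{-1/2}\bar{z}_n^{(i)}(r,0)\,dr$), and this term cannot be absorbed into the quadratic ones by Cauchy--Schwarz, since $x^{-1/2}\notin L^2(0,1)$; a Poincar\'e inequality gives no control of a weighted trace integral of this kind. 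This is exactly where the paper invokes the Hardy-type inequality of Machihara--Ozawa--Wadade (\Cref{hardyonballs}), in the scale-invariant trace form of \Cref{lemmacpt}:
\[
\int_0^Rx^{-1/2}|h(x,0)|\,dx\le\kappa\Bigl(R\int_{B_R^+(\bm{0})}|\nabla h|^2\,d\bm{x}\Bigr)^{1/2}+\kappa\Bigl(\int_0^Rh(x,0)^2\,dx\Bigr)^{1/2},
\]
applied with $R=\e_n$; note that the factor $R$ in front of the gradient term makes the scaling exactly critical, so that after dividing by $\sqrt{\e_n}$ the linear term is bounded by the square roots of the very quantities one is trying to bound, and the completed-square expression in (\ref{Fe2final}) is bounded below self-improvingly. (The same inequality is what makes $\J_i$ bounded below and the minimization in \Cref{auxVP} well posed; the introduction explicitly flags it as the key ingredient of the compactness proofs.) Without this inequality --- or an equivalent substitute, such as a scale-invariant $H^{1/2}\hookrightarrow L^p$ trace embedding with $p>2$ paired with $x^{-1/2}\in L^{p'}(0,1)$ --- your coercivity display is unjustified, and the extraction of $w_0$, $q_0$, and $W_i$ in (\ref{2cpt1})--(\ref{barw3}) does not get off the ground.
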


\begin{thm}[$2$nd order $\Gamma$-convergence]
\label{2gc}
Under the assumptions of \Cref{2gc-cpt}, the family $\{\F_{\e}^{(2)}\}_{\e}$ $\Gamma$-converges in $L^2(\Omega)$ to the functional
\[
\label{F}
\F_2(v) \coloneqq 
\left\{
\arraycolsep=1.4pt\def\arraystretch{1.6}
\begin{array}{ll}
\sum_{i = 1}^2 \left( \frac{A_i}{2} + B_i c_i + C_{\varphi} c_i^2 \right) - \frac{1}{2} \int_{\Gamma_D}\left(\partial_{\nu}u_{\reg}^0\right)^2\,d\mathcal{H}^1 & \text{ if } v = u_0, \\
+\infty & \text{ otherwise},
\end{array}
\right.
\]
where the numbers $A_i, B_i$, and $C_{\varphi}$ are defined in $(\ref{A_i})$, $(\ref{B_i})$, and $(\ref{C_i})$, respectively. In particular, if $u_{\e} \in H^1(\Omega)$ is a solution to $(\ref{Pe})$ then 
\begin{equation}
\label{Fe2Tay}
\F_{\e}(u_{\e}) = \F_0(u_0) + \e |\log \e|\F_1(u_0) + \e \F_2(u_0) + o\left(\e\right).
\end{equation}
\end{thm}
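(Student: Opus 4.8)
The plan is to verify the two $\Gamma$-convergence inequalities for $\{\F_{\e}^{(2)}\}_{\e}$ and then deduce the expansion \eqref{Fe2Tay} from the general theory of asymptotic developments. I would first record, from \Cref{0gc} and \Cref{1gc}, that $\min \F_0 = \F_0(u_0)$ and $\min \F_1 = -\tfrac18\sum_{i=1}^2 c_i^2$, so that \eqref{Fe2} becomes
\[
\F_{\e}^{(2)}(v) = \frac{\F_{\e}(v) - \F_0(u_0)}{\e} + \frac{|\log \e|}{8}\sum_{i=1}^2 c_i^2.
\]
Since $\F_2(v) = +\infty$ for $v \neq u_0$, both inequalities only need to be examined at $v = u_0$.

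For the liminf inequality, take $\e_n \to 0^+$ and $w_n \to u_0$ in $L^2(\Omega)$ with finite liminf; passing to a subsequence realizing it with uniformly bounded energy, \Cref{2gc-cpt} applies and furnishes $w_0 \in H^1(\Omega)$, $q_0 \in L^2_{\loc}(\Gamma_D)$ and profiles $W_i \in H$ with $\J_i(W_i) < \infty$, together with the convergences \eqref{2cpt1}--\eqref{barw3}. I would then expand $\F_{\e_n}^{(2)}(w_n)$ by writing $w_n - u_0 = \e_n u_1 + \sqrt{\e_n}\,\rho_n$ with $\rho_n \rightharpoonup w_0$ in $H^1(\Omega)$ from \eqref{2cpt1}, and reduce all bulk integrals to boundary integrals on $\Gamma_D$ using $\Delta u_0 = f$, $\Delta u_1 = 0$, the Neumann conditions on $\Gamma_N$, and the Dirichlet data in \eqref{P0} and \eqref{mixedu1}. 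The deterministic order-$\e_n$ term combines with the regular part of the penalty to give $-\tfrac12\int_{\Gamma_D}(\partial_{\nu}u_{\reg}^0)^2\,d\mathcal{H}^1 + \sum_i B_i c_i$, while the interaction of the $\sqrt{\e_n}$-corrector with the singular normal derivative must be grouped with the penalty and the corner coronae. Splitting off fixed coronae $B_{\rho}(\bm{x}_i)$ and performing the blow-up $\bm{x} = \bm{x}_i + \e_n\bm{y}$, the local energy becomes $\tfrac12\J_i$ evaluated on $W_{i,n}$; once the logarithmically divergent piece of the singular penalty is absorbed into the added term $\tfrac{|\log\e|}{8}\sum c_i^2$ (leaving the finite remainder $\sum_i C_{\varphi} c_i^2$), weak lower semicontinuity of the Dirichlet energy, of the penalty along \eqref{2cpt2}, and of $\J_i$ along \eqref{barw1}--\eqref{barw3} gives $\liminf_k \tfrac12\J_i(W_{i,n_k}) \ge \tfrac12\J_i(W_i) \ge \tfrac12 A_i$. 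Summing the three contributions yields $\liminf_n \F_{\e_n}^{(2)}(w_n) \ge \F_2(u_0)$.

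For the limsup inequality it suffices to build a recovery sequence at $u_0$. Guided by \eqref{2cpt1} and \eqref{barw}, I would adopt the two-scale ansatz
\[
w_n \coloneqq u_0 + \e_n u_1 + \sqrt{\e_n}\sum_{i=1}^2 \bar{\varphi}(r_i)\,\bar{w}_i\!\left(\frac{r_i}{\e_n},\theta_i\right),
\]
where $w_i \in H$ is the minimizer of $\J_i$ supplied by \Cref{auxVP} and $u_1$ solves \eqref{mixedu1}; here $u_1$ cancels the regular normal derivative $\partial_{\nu}u_{\reg}^0$ in the boundary penalty, while the rescaled profile $w_i$, whose far-field trace equals $\tfrac{c_i}{2}x^{-1/2} = c_i\psi_i$ by \eqref{aux}, matches the singular normal derivative. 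One checks $w_n \to u_0$ in $L^2(\Omega)$, and in evaluating $\F_{\e_n}^{(2)}(w_n)$ the inner regions contribute $\tfrac12\J_i(w_i) = \tfrac12 A_i$ in the limit, the outer penalty together with the bulk cross term reproduces $\sum_i(B_i c_i + C_{\varphi} c_i^2) - \tfrac12\int_{\Gamma_D}(\partial_{\nu}u_{\reg}^0)^2\,d\mathcal{H}^1$, and the gluing errors in the transition annuli vanish. Hence $\limsup_n \F_{\e_n}^{(2)}(w_n) \le \F_2(u_0)$, matching the liminf bound.

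Finally, since $u_{\e}$ minimizes $\F_{\e}$ and hence $\F_{\e}^{(2)}$, the equicoercivity supplied by \Cref{1gc-cpt} and \Cref{2gc-cpt} together with the $\Gamma$-convergences $\F_{\e}^{(1)}\overset{\Gamma}{\to}\F_1$ and $\F_{\e}^{(2)}\overset{\Gamma}{\to}\F_2$ give $\min\F_{\e}^{(2)}\to\min\F_2 = \F_2(u_0)$; unwinding the definitions \eqref{Fe1} and \eqref{Fe2} yields $\F_{\e}(u_{\e}) = \min\F_{\e} = \F_0(u_0) + \e|\log\e|\F_1(u_0) + \e\F_2(u_0) + o(\e)$, which is \eqref{Fe2Tay}. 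I expect the main obstacle to be the matching in the corona regions: tracking the logarithmic divergence so that it is exactly cancelled by the added term $\tfrac{|\log\e|}{8}\sum c_i^2$, and verifying, both on the liminf side (that the blow-up genuinely converges to the profile problem $\J_i$ with no loss of energy) and on the limsup side (that the gluing between the inner profiles $w_i$ and the outer field $u_0 + \e_n u_1$ produces no spurious order-$\e$ terms), that precisely the cutoff-dependent constant $C_{\varphi}$ and the interaction constant $B_i$ are recovered.
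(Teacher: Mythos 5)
Your proposal is correct and follows essentially the same route as the paper: your liminf argument (boundary reduction via \Cref{IBP}, completion of squares absorbing the $\tfrac{|\log\e|}{8}\sum_i c_i^2$ term into $B_ic_i + C_{\varphi}c_i^2$, and blow-up to the profile functional $\J_i$ along the convergences of \Cref{2gc-cpt}) reproduces the paper's identity (\ref{Fe2final}) and estimate (\ref{MFe2}), while your two-scale ansatz $w_n = u_0 + \e_n u_1 + \sqrt{\e_n}\,\bar{\varphi}(r_i)\bar{w}_i(r_i/\e_n,\theta_i)$ is exactly the paper's recovery sequence (\ref{znpolar}), with the transition-annulus errors handled as in \Cref{e_nL2}. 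Your closing minimality/equicoercivity argument for (\ref{Fe2Tay}) is precisely what the paper's citation of Theorem 1.2 in \cite{MR1202527} encapsulates.
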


As a consequence of our results, we obtain an alternative proof of the sharp estimates (\ref{ue-u0H1+s}) for $s = 0$ and (\ref{ue-u0L2bdry}) in \Cref{cdthm}. Indeed, we have the following theorem.

\begin{thm}
\label{mixedest}
Let $N = 2$, $\Omega$ as in \Cref{H2dec}, $f \in L^2(\Omega)$, $g \in H^{3/2}(\partial \Omega)$, and let $u_{\e}$ and $u_0$ be solutions to $(\ref{Pe})$ and $(\ref{P0})$, respectively. Then 
\begin{align}
\|u_{\e} - u_0\|_{L^2(\Gamma_D)} = &\ \mathcal{O}\left(\e \sqrt{|\log \e|}\right), \label{re-bdry} \\
\|\nabla (u_{\e} - u_0)\|_{L^2(\Omega;\RR^2)} = &\ \mathcal{O}\left(\e^{1/2}\right). \label{re-grad}
\end{align}
\end{thm}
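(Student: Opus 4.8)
The plan is to harvest the two asymptotic developments $(\ref{Fe1Tay})$ and $(\ref{Fe2Tay})$ together with the compactness theorems \Cref{1gc-cpt} and \Cref{2gc-cpt}, using crucially that $u_\e$ is the minimizer of $\F_\e$ and that $\F_1(u_0) = \min \F_1$ and $\F_2(u_0) = \min \F_2$. Since both conclusions are $\mathcal{O}(\cdot)$ statements as $\e \to 0^+$, it suffices to show that along every sequence $\e_n \to 0^+$ some subsequence obeys the claimed bound; a standard contradiction argument then upgrades this to the whole family. Concretely, were $\limsup_{\e \to 0^+}\|u_\e - u_0\|_{L^2(\Gamma_D)}/(\e\sqrt{|\log\e|}) = +\infty$, we could pick $\e_n \to 0^+$ along which this ratio diverges, and the compactness below would produce a subsequence along which it stays bounded, a contradiction (and similarly for the gradient).

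For the boundary estimate $(\ref{re-bdry})$ I would use the first order development. Evaluating $\F_\e^{(1)}$ at the minimizer $u_\e$ and inserting $(\ref{Fe1Tay})$ gives
\[
\F_\e^{(1)}(u_\e) = \frac{\F_\e(u_\e) - \min \F_0}{\e|\log\e|} = \F_1(u_0) + o(1),
\]
so $\sup_n \F_{\e_n}^{(1)}(u_{\e_n}) < \infty$. Hence \Cref{1gc-cpt} applies with $v_n = u_{\e_n}$ and yields, up to a subsequence, the weak convergence $(\ref{1cpt2})$ of $(u_{\e_{n_k}} - u_0)/(\e_{n_k}\sqrt{|\log\e_{n_k}|})$ in $L^2(\Gamma_D)$. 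A weakly convergent sequence is bounded, so $\|u_{\e_{n_k}} - u_0\|_{L^2(\Gamma_D)} \le C\,\e_{n_k}\sqrt{|\log\e_{n_k}|}$, which is precisely $(\ref{re-bdry})$.

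For the gradient estimate $(\ref{re-grad})$ the first order compactness is not sharp enough: $(\ref{1cpt1})$ only gives $\|\nabla(u_\e - u_0)\|_{L^2} = \mathcal{O}(\sqrt{\e|\log\e|})$, which carries a spurious logarithmic factor. To remove it I would pass to the second order development. Evaluating $\F_\e^{(2)}$ at $u_\e$ and using $(\ref{Fe2Tay})$ together with $\F_1(u_0) = \min\F_1$ gives
\[
\F_\e^{(2)}(u_\e) = |\log\e|\bigl(\F_1(u_0) - \min\F_1\bigr) + \F_2(u_0) + o(1) = \F_2(u_0) + o(1),
\]
so $\sup_n \F_{\e_n}^{(2)}(u_{\e_n}) < \infty$ and \Cref{2gc-cpt} applies with $w_n = u_{\e_n}$. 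From $(\ref{2cpt1})$, $(u_{\e_{n_k}} - u_0 - \e_{n_k}u_1)/\sqrt{\e_{n_k}}$ converges weakly, hence is bounded, in $H^1(\Omega)$, giving $\|\nabla(u_{\e_{n_k}} - u_0 - \e_{n_k}u_1)\|_{L^2} = \mathcal{O}(\sqrt{\e_{n_k}})$. Since $u_1 \in H^1(\Omega)$ is a fixed function solving $(\ref{mixedu1})$, the corrector obeys $\|\nabla(\e_{n_k}u_1)\|_{L^2} = \e_{n_k}\|\nabla u_1\|_{L^2} = \mathcal{O}(\e_{n_k}) = o(\sqrt{\e_{n_k}})$, so the triangle inequality yields $(\ref{re-grad})$.

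There is no single heavy computation here, since the proof is a bookkeeping of the already established machinery. The point I expect to require the most care is the logical structure rather than an estimate: the sharp exponent $\e^{1/2}$ for the gradient (without the extra $\sqrt{|\log\e|}$) genuinely needs the second order compactness \Cref{2gc-cpt} and the subtraction of the corrector $\e u_1$, whereas \Cref{1gc-cpt} would lose this sharpness. The only technical subtlety is the clean passage from subsequential weak bounds to the uniform $\mathcal{O}(\cdot)$ statement for the full family, which I would phrase through the contradiction argument indicated in the first paragraph.
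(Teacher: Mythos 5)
Your proposal is correct and follows essentially the same route as the paper: both arguments derive uniform energy bounds $\sup_n \F_{\e_n}^{(1)}(u_{\e_n}) < \infty$ and $\sup_n \F_{\e_n}^{(2)}(u_{\e_n}) < \infty$ from the expansions (\ref{Fe1Tay}) and (\ref{Fe2Tay}), invoke the compactness theorems \Cref{1gc-cpt} and \Cref{2gc-cpt} to obtain weak (hence bounded) subsequences, and upgrade to the full family by the contradiction argument. Your explicit handling of the corrector via $\e\|\nabla u_1\|_{L^2(\Omega;\RR^2)} = o(\e^{1/2})$ is exactly what the paper leaves implicit in its remark that (\ref{re-grad}) ``follows analogously from (\ref{2cpt1}) and (\ref{Fe2Tay}).''
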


In contrast to the work of Costabel and Dauge \cite{cd96}, our results rely on the variational structure of the mixed Neumann-Robin problem $(\ref{Pe})$, rather than the PDE. In particular, the compactness results in \Cref{1gc-cpt} and \Cref{2gc-cpt} are valid for energy bounded sequences and not just for minimizers, and thus are completely new. A key ingredient in the proof of compactness is the following Hardy-type inequality on balls due to Machihara, Ozawa and Wadade (see Corollary 6 in \cite{MR3102537}).
\begin{thm}
\label{hardyonballs}
Let $B_R(\bm{0})$ be the ball of $\RR^2$ with radius $R > 0$ and center at the origin. Then 
\[
\begin{aligned}
\left(\int_{B_R(\bm{0})}\,\frac{h(\bm{x})^2}{|\bm{x}|^2\left(1 + \log R - \log |\bm{x}|\right)^2}\,d\bm{x}\right)^{1/2} \le &\ \frac{\sqrt{2}}{R}\left(\int_{B_R(\bm{0})}h(\bm{x})^2\,d\bm{x}\right)^{1/2} \\
&\ \quad + 2(1 + \sqrt{2})\left(\int_{B_R(\bm{0})}\left|\frac{\bm{x}}{|\bm{x}|}\cdot \nabla h(\bm{x})\right|^2\,d\bm{x}\right)^{1/2}
\end{aligned}
\]
for every $h \in H^1(B_R(\bm{0}))$.
\end{thm}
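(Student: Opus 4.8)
\emph{Proof proposal.}
The plan is to reduce the stated inequality to a one-dimensional Hardy-type estimate along rays and then reassemble by the triangle inequality in $L^2(\mathbb{S}^1)$. First I would observe that the only derivative appearing on the right-hand side is the radial one, since $\frac{\bm{x}}{|\bm{x}|}\cdot \nabla h = \partial_r h$ in polar coordinates $\bm{x} = r\omega$, with $r = |\bm{x}|$ and $\omega \in \mathbb{S}^1$. Writing $d\bm{x} = r\,dr\,d\omega$, for a fixed direction $\omega$ I set $u(r) \coloneqq h(r\omega)$ and introduce the three quantities $L(\omega)^2 \coloneqq \int_0^R \frac{u^2}{r(1 + \log(R/r))^2}\,dr$, $A(\omega)^2 \coloneqq \int_0^R u^2 r\,dr$, and $G(\omega)^2 \coloneqq \int_0^R (u')^2 r\,dr$, whose integrals over $\mathbb{S}^1$ recover, respectively, the square of the left-hand side and the squares of the two integrals on the right-hand side. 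It then suffices to establish the pointwise-in-$\omega$ estimate $L(\omega) \le \frac{\sqrt 2}{R}A(\omega) + 2(1 + \sqrt 2)G(\omega)$, since integrating its square over $\mathbb{S}^1$ and applying the triangle inequality in $L^2(\mathbb{S}^1)$ yields the claim.

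To prove the one-dimensional estimate I would first remove the logarithmic weight via the substitution $s \coloneqq \log(R/r) \in (0,\infty)$ and $v(s) \coloneqq u(Re^{-s})$, under which a direct computation (using $v'(s) = -r\,u'(r)$ and $ds = -dr/r$) gives $L(\omega)^2 = \int_0^\infty \frac{v^2}{(1+s)^2}\,ds =: I$, $A(\omega)^2 = R^2\int_0^\infty v^2 e^{-2s}\,ds =: R^2 P$, and $G(\omega)^2 = \int_0^\infty (v')^2\,ds =: Q$. Hence the target reduces to the clean form $\sqrt I \le \sqrt 2\,\sqrt P + 2(1+\sqrt 2)\sqrt Q$ on the half-line.

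For this, integrating by parts against the primitive $-1/(1+s)$ of the weight gives $I = v(0)^2 + 2\int_0^\infty \frac{vv'}{1+s}\,ds$, where the boundary contribution at $s = \infty$ vanishes. By Cauchy--Schwarz the last integral is at most $\sqrt I\,\sqrt Q$, so $I \le v(0)^2 + 2\sqrt I\,\sqrt Q$; solving this quadratic inequality in $\sqrt I$ and using $\sqrt{a+b} \le \sqrt a + \sqrt b$ yields $\sqrt I \le 2\sqrt Q + |v(0)|$. It remains to control the trace $v(0)$, which I would do by writing $v(0) = -\int_0^\infty \frac{d}{ds}\big(v e^{-2s}\big)\,ds = 2\int_0^\infty v e^{-2s}\,ds - \int_0^\infty v' e^{-2s}\,ds$ and estimating each integral by Cauchy--Schwarz, obtaining $|v(0)| \le \sqrt 2\,\sqrt P + \tfrac12 \sqrt Q$. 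Combining the two bounds gives $\sqrt I \le \sqrt 2\,\sqrt P + \tfrac52 \sqrt Q$, which is in fact stronger than required since $\tfrac52 \le 2(1+\sqrt 2)$; the stated constants are therefore recovered with room to spare on the gradient term.

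The computations above are justified for, say, $h \in C^\infty(\overline{B_R(\bm{0})})$, for which $v$ is smooth and bounded and the boundary terms at $s = \infty$ (equivalently $r \to 0^+$) genuinely vanish thanks to the finiteness of $P$ and $Q$. The main point requiring care — and the step I expect to be the chief obstacle — is the passage to a general $h \in H^1(B_R(\bm{0}))$: because the left-hand side carries a weight that is singular at the origin, one cannot simply invoke continuity of all the terms under $H^1$ convergence. Instead I would approximate $h$ by smooth functions $h_n \to h$ in $H^1(B_R(\bm{0}))$, note that the right-hand side passes to the limit, extract an a.e.-convergent subsequence, and apply Fatou's lemma to the singularly weighted left-hand side, thereby upgrading the inequality from smooth functions to all of $H^1(B_R(\bm{0}))$.
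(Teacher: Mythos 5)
Your proposal is correct, and it differs from the paper in the most basic possible way: the paper does not prove \Cref{hardyonballs} at all, but imports it as Corollary 6 of Machihara, Ozawa and Wadade \cite{MR3102537} and uses it as a black box (via \Cref{hardy} and \Cref{lemmacpt}) in the compactness arguments. Your argument is thus a self-contained, elementary replacement for the citation. The reduction to rays is legitimate because the right-hand side involves only the radial derivative $\frac{\bm{x}}{|\bm{x}|}\cdot\nabla h=\partial_r h$, and Minkowski's inequality in $L^2(\mathbb{S}^1)$ correctly reassembles the pointwise-in-$\omega$ bound; the substitution $s=\log(R/r)$, the integration by parts against the primitive $-1/(1+s)$, the quadratic inequality $I\le v(0)^2+2\sqrt{I}\sqrt{Q}$, and the trace identity $v(0)=2\int_0^\infty v e^{-2s}\,ds-\int_0^\infty v' e^{-2s}\,ds$ all check out (the two Cauchy--Schwarz constants are $2(\int_0^\infty e^{-2s}\,ds)^{1/2}=\sqrt{2}$ and $(\int_0^\infty e^{-4s}\,ds)^{1/2}=\tfrac12$, as you state). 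The concluding density-plus-Fatou step is indeed the only delicate point given the singular weight, and you handle it correctly: the right-hand side is continuous under $H^1(B_R(\bm{0}))$ convergence since $\left|\frac{\bm{x}}{|\bm{x}|}\cdot\nabla(h_n-h)\right|\le|\nabla(h_n-h)|$, while the left-hand side is only lower semicontinuous along an a.e.\ convergent subsequence. Two remarks: first, your constants are strictly better, namely $\sqrt{I}\le\sqrt{2}\sqrt{P}+\tfrac{5}{2}\sqrt{Q}$, i.e.\ $5/2$ in place of $2(1+\sqrt{2})\approx 4.83$ on the gradient term, with the $L^2$ constant $\sqrt{2}/R$ matched exactly; this is harmless for the paper, which never uses the specific values of the constants. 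Second, you should say explicitly that $I<\infty$ before solving the quadratic inequality in $\sqrt{I}$; your restriction to $h\in C^\infty(\overline{B_R(\bm{0})})$ at that stage does cover it, since then $v$ is bounded and $\int_0^\infty(1+s)^{-2}\,ds=1$ gives $I\le\sup_s v(s)^2<\infty$, but the sentence is worth adding.
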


It also important to observe that the asymptotic development by $\Gamma$-convergence leads naturally to the asymptotic expansion of the solutions $u_{\e}$ to $(\ref{Pe})$, and does not require an a priori ansatz of this expansion. Thus it could be applied to a large class of problems, including the $p$-Laplacian mixed problem
\[
\left\{
\arraycolsep=1.4pt\def\arraystretch{1.6}
\begin{array}{rll}
\operatorname{div}(|\nabla u_0|^{p - 2}\nabla u_0) = & f & \text{ in } \Omega, \\
|\nabla u_0|^{p-2}\partial_{\nu}u_0 = & 0 & \text{ on } \Gamma_N, \\
u_0 = & g & \text{ on } \Gamma_D.
\end{array}
\right.
\]
Another example is the seminal paper \cite{bcn90}, where  Berestycki, Caffarelli and Nirenberg considered the family of elliptic equations
\begin{equation}
\label{Lube}
L u_{\e} = \beta_{\e}(u_{\e})
\end{equation}
to approximate (as $\e \to 0^+$) a one-phase free boundary problem. Here the family $\{\beta_{\e}\}_{\e}$ is an approximate identity and the term $\beta_{\e}(u_{\e})$ is non-zero only for values of $u_{\e}$ less than $\e$. In particular, the region $\{u_{\e} < \e\}$ can be thought of as an approximation of the free boundary of the solution to the limiting problem. One-phase free boundary problems with mixed boundary conditions are strongly related to problems arising in fluid-dynamics (see \cite{GL18}). Our original motivation for this paper was the study of the regularized problem
\[
\left\{
\arraycolsep=1.4pt\def\arraystretch{1.6}
\begin{array}{rll}
\Delta u_{\e} = & \frac{1}{2}\beta_{\e}(u_{\e})Q^2 & \text{ in } \Omega, \\
\partial_{\nu}u_{\e} = & 0 & \text{ on } \Gamma_N, \\
\e \partial_{\nu} u_{\e} + u_{\e} = & g & \text{ on } \Gamma_D,
\end{array}
\right.
\]
where $\{\beta_{\e}\}_{\e}$ is a family of approximate identities as in (\ref{Lube}) and $Q$ is a nonnegative function in $L^2_{\loc}(\Omega)$. Solutions $u_{\e}$ of this problem converge to a solution $u$ of the one-phase free boundary problem 
\[
\left\{
\arraycolsep=1.4pt\def\arraystretch{1.6}
\begin{array}{rll}
\Delta u =  & 0 & \text{ in } \Omega, \\
u = & 0,\ |\nabla u| = Q & \text{ on } \Omega \cap \partial \{u > 0\}, \\
\partial_{\nu}u = & 0 & \text{ on } \Gamma_N, \\
u = & g & \text{ on } \Gamma_D.
\end{array}
\right.
\]
The asymptotic development by $\Gamma$-convergence of the corresponding family of functionals
\[
\int_{\Omega}\left(|\nabla v|^2 + B_{\e}(v)Q^2\right)\,d\bm{x} + \frac{1}{\e}\int_{\Gamma_D}(v - g)^2\,d\mathcal{H}^{N-1}, \quad v \in H^1(\Omega)
\]
is ongoing work. Here $B_{\e}$ is a primitive of $\beta_{\e}$.

Our paper is organized as follows: in Section 2 we study the minimization problem for the functional (\ref{Pe}) and prove \Cref{0gc}. As a consequence, in \Cref{convofmin} we show that there exists a unique variational solution to the problem (\ref{P0}). Section 3 is devoted to the study of the simpler case in which $\Gamma_D = \partial \Omega$, so that (\ref{Pe}) reduces to
\begin{equation}
\label{Re}
\left\{
\arraycolsep=1.4pt\def\arraystretch{1.6}
\begin{array}{rll}
\Delta u_{\e} = & f &  \text{ in } \Omega, \\
\e \partial_{\nu}u_{\e} + u_{\e} = & g & \text{ on } \partial \Omega.
\end{array}
\right.
\end{equation} 
Under suitable regularity assumptions on the set $\Omega$, we characterize the complete asymptotic expansion by $\Gamma$-convergence of $\{\F_{\e}\}_{\e}$, still defined as in (\ref{Fe}), but with $\Gamma_D$ replaced by $\partial \Omega$ (see \Cref{1gcJ}, \Cref{2gcJ}, and \Cref{kgcJ}). In \Cref{estimatesr1J} and \Cref{estimatesrkJ} we address the question of the convergence of $u_{\e}$ to $u_0$, i.e. the unique variational solution to the Dirichlet problem
\begin{equation}
\label{R0}
\left\{
\arraycolsep=1.4pt\def\arraystretch{1.6}
\begin{array}{rll}
\Delta u_0 = & f &  \text{ in } \Omega, \\
u_0 = & g & \text{ on } \partial \Omega.
\end{array}
\right.
\end{equation}
To be precise, we show that the asymptotic expansion 
\[
u_{\e} = \sum_{i = 1}^{\infty}\e^i u_i
\]
holds, where for every $i \in \NN$ the function $u_i$ is a solution to the Dirichlet problem
\[
\left\{
\arraycolsep=1.4pt\def\arraystretch{1.6}
\begin{array}{rll}
\Delta u_i = & 0 &  \text{ in } \Omega, \\
u_i = & -\partial_{\nu}u_{i - 1} & \text{ on } \partial \Omega.
\end{array}
\right.
\]
We remark that \Cref{estimatesrkJ} fully recovers the results of Theorem 2.3 in \cite{cd96} and that the auxiliary problems for $u_i$ arise naturally during the study of higher order $\Gamma$-limits of $\F_{\e}$ (see, for example, the proof of \Cref{2gcJ}). The case of a Robin boundary condition that transforms into a Dirichlet boundary condition for Helmholtz equation was considered by Kirsch in \cite{MR803839}. In Section 4 we prove our main results. In Section 5 we recast these results in a more general framework by decoupling the different scales in the asymptotic expansion of $u_{\e}$.
\section{Gamma-convergence of order zero and global minimizers}
Throughout the section we study the mixed problem (\ref{Pe}) and the associated minimization problem under the following assumptions on the set $\Omega$ and on $\Gamma_D$, namely the portion of the boundary where the Robin boundary condition is imposed:
\begin{equation}
\label{H0}\tag{$H_0$}
\left\{
\arraycolsep=1.4pt\def\arraystretch{1.6}
\begin{array}{rl}
(i) & \Omega \text{ is an open, bounded and connected subset of } \RR^N, \\
(ii) & \partial \Omega \text{ is Lipschitz continuous}, \\
(iii) & \Gamma_D \text{ is a non-empty and relatively open subset of } \partial \Omega.
\end{array}
\right.
\end{equation}
Furthermore, define $\Gamma_N \coloneqq \partial \Omega \setminus \overline{\Gamma_D}$. Notice that for the purposes of this section we do not assume that $\Gamma_N \neq \emptyset$; analogous results hold (with trivial changes) if $\Gamma_N = \emptyset$.

\begin{thm}
\label{existenceFe}
Let $\Omega$ be as in $($\ref{H0}$)$, $f \in L^2(\Omega)$, $g \in L^2(\partial \Omega)$, and $\e \in (0,1)$. Then the functional $\F_{\e}$, defined as in $(\ref{Fe})$, admits a unique minimizer $u_{\e} \in H^1(\Omega)$. Furthermore, $u_{\e}$ is a weak solution to the mixed Neumann-Robin problem $(\ref{Pe})$.
\end{thm}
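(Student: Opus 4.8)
The plan is to apply the direct method of the calculus of variations. The four ingredients are coercivity, weak lower semicontinuity, strict convexity (for uniqueness), and the computation of the first variation (to identify the minimizer with a weak solution of (\ref{Pe})). I expect coercivity to be the only genuinely technical step. The point is that the boundary term, although supported only on $\Gamma_D$, controls the full $H^1$-norm once combined with a suitable Poincar\'e inequality. Concretely, since $\Gamma_D$ is relatively open and nonempty it has positive $\mathcal{H}^{N-1}$-measure, and because $\Omega$ is bounded, connected and Lipschitz there is a constant $C = C(\Omega,\Gamma_D)$ with
\[
\|v\|_{L^2(\Omega)}^2 \le C\left(\|\nabla v\|_{L^2(\Omega)}^2 + \|v\|_{L^2(\Gamma_D)}^2\right) \quad \text{for all } v \in H^1(\Omega).
\]
Using $\|v\|_{L^2(\Gamma_D)}^2 \le 2\|v-g\|_{L^2(\Gamma_D)}^2 + 2\|g\|_{L^2(\Gamma_D)}^2$ and Young's inequality to absorb the linear term $\int_\Omega fv\,d\bm{x}$, I would derive a bound of the form $\F_\e(v) \ge c_1\|v\|_{H^1(\Omega)}^2 - c_2$ with constants depending only on $\e$, $f$, $g$, and $\Omega$. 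This simultaneously shows $\F_\e$ is bounded below (and not identically $+\infty$, since any fixed $H^1$ function has finite energy) and that its sublevel sets are bounded in $H^1(\Omega)$. I regard this Poincar\'e-type estimate as the main obstacle; everything else is routine.

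For weak lower semicontinuity, take $v_n \rightharpoonup v$ in $H^1(\Omega)$. The Dirichlet term $\tfrac12\int_\Omega|\nabla v|^2\,d\bm{x}$ is convex and continuous, hence weakly lower semicontinuous; the term $\int_\Omega fv\,d\bm{x}$ is weakly continuous because $f \in L^2(\Omega)$; and by the compactness of the trace operator $H^1(\Omega) \to L^2(\partial\Omega)$ one has $v_n|_{\Gamma_D} \to v|_{\Gamma_D}$ strongly in $L^2(\Gamma_D)$, so the boundary term $\tfrac{1}{2\e}\int_{\Gamma_D}(v-g)^2\,d\mathcal{H}^{N-1}$ passes to the limit. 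Thus $\F_\e$ is weakly lower semicontinuous. Taking a minimizing sequence, coercivity provides a weakly convergent subsequence in $H^1(\Omega)$ whose limit is then a minimizer $u_\e$.

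Uniqueness follows from convexity. The functional is a sum of the convex quadratic forms $v \mapsto \int_\Omega |\nabla v|^2\,d\bm{x}$ and $v \mapsto \int_{\Gamma_D}(v-g)^2\,d\mathcal{H}^{N-1}$ together with a linear term, hence convex. If $u_1$ and $u_2$ are both minimizers, then so is their midpoint, and the strict convexity of $t \mapsto t^2$ applied to the gradient and boundary contributions forces $\nabla u_1 = \nabla u_2$ a.e.\ in $\Omega$ and $u_1 = u_2$ $\mathcal{H}^{N-1}$-a.e.\ on $\Gamma_D$. Since $\Omega$ is connected, $u_1 - u_2$ equals a constant a.e., and this constant must vanish because $\Gamma_D$ has positive measure; therefore $u_1 = u_2$.

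Finally, to see that $u_\e$ is a weak solution of (\ref{Pe}), I would compute the first variation. For every $\varphi \in H^1(\Omega)$ the map $t \mapsto \F_\e(u_\e + t\varphi)$ is a quadratic polynomial in $t$, and its derivative at $t = 0$ is
\[
\int_\Omega \left(\nabla u_\e \cdot \nabla \varphi + f\varphi\right)d\bm{x} + \frac{1}{\e}\int_{\Gamma_D}(u_\e - g)\varphi\,d\mathcal{H}^{N-1}.
\]
Minimality forces this expression to vanish for all $\varphi \in H^1(\Omega)$, which is exactly the weak formulation of (\ref{Pe}): testing with $\varphi \in C_c^\infty(\Omega)$ recovers $\Delta u_\e = f$ in $\Omega$, and the remaining boundary integrals encode $\partial_\nu u_\e = 0$ on $\Gamma_N$ together with $\e\,\partial_\nu u_\e + u_\e = g$ on $\Gamma_D$.
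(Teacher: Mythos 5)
Your proposal is correct and follows essentially the same route as the paper: the direct method with coercivity obtained from the Poincar\'e-type inequality $\|v\|_{L^2(\Omega)}^2 \le C(\|\nabla v\|_{L^2(\Omega)}^2 + \|v\|_{L^2(\Gamma_D)}^2)$, which is exactly the paper's \Cref{equivnorm}, plus Young's inequality to handle the $g$ and $f$ terms, uniqueness by strict convexity, and the first variation to identify $u_\e$ as a weak solution of $(\ref{Pe})$. The only difference is one of detail, not substance: you spell out the lower semicontinuity (via compactness of the trace) and the strict-convexity argument (connectedness of $\Omega$ and positivity of $\mathcal{H}^{N-1}(\Gamma_D)$), which the paper asserts without elaboration.
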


The proof of \Cref{existenceFe} is based on the following well-known result.

\begin{lem}
\label{equivnorm}
Let $\Omega$ be as in $($\ref{H0}$)$ and for $u \in H^1(\Omega)$ set
\begin{equation}
\label{notationnorm}
\niii{u}_{H^1(\Omega)} \coloneqq \left(\|\nabla u\|^2_{L^2(\Omega;\RR^N)} + \|u\|^2_{L^2(\Gamma_D)}\right)^{1/2}.
\end{equation}
Then $\niii{ \cdot }_{H^1(\Omega)}$ defines a norm on $H^1(\Omega)$ that is equivalent to the standard norm, i.e., there are two constants $\kappa_1, \kappa_2$, which only depend on the geometry of $\Omega$ and $\Gamma_D$, such that for every $u \in H^1(\Omega)$,
\[
\kappa_1\niii{u}_{H^1(\Omega)} \le \|u\|_{H^1(\Omega)} \le \kappa_2\niii{u}_{H^1(\Omega)}.
\]
\end{lem}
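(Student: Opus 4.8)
The plan is to establish the two inequalities separately: the upper bound $\niii{u}_{H^1(\Omega)} \le \kappa_1^{-1}\|u\|_{H^1(\Omega)}$ is elementary and follows from the trace inequality, whereas the lower bound $\|u\|_{H^1(\Omega)} \le \kappa_2\niii{u}_{H^1(\Omega)}$ is a Poincar\'e-type estimate that I would prove by a compactness-contradiction argument. Before doing either, I would observe that $\niii{\cdot}_{H^1(\Omega)}$ is automatically a seminorm, since homogeneity and the triangle inequality are inherited from the two $L^2$-norms appearing in \eqref{notationnorm}; positive-definiteness, and hence the fact that it is genuinely a norm, will follow a posteriori from the equivalence itself.

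For the upper bound, the gradient contribution is controlled trivially by $\|\nabla u\|_{L^2(\Omega;\RR^N)} \le \|u\|_{H^1(\Omega)}$, while the boundary contribution is handled by the trace inequality $\|u\|_{L^2(\Gamma_D)} \le \|u\|_{L^2(\partial \Omega)} \le c\,\|u\|_{H^1(\Omega)}$, which is valid because $\partial \Omega$ is Lipschitz by \eqref{H0}$(ii)$. Squaring and adding yields $\niii{u}_{H^1(\Omega)} \le (1 + c^2)^{1/2}\|u\|_{H^1(\Omega)}$, giving the constant $\kappa_1$.

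The substance of the lemma is the lower bound, and here I would argue by contradiction. Suppose no constant $\kappa_2$ works; then there exists a sequence $\{u_n\}_n \subset H^1(\Omega)$ with $\|u_n\|_{H^1(\Omega)} = 1$ and $\niii{u_n}_{H^1(\Omega)} \to 0$, so in particular $\|\nabla u_n\|_{L^2(\Omega;\RR^N)} \to 0$ and $\|u_n\|_{L^2(\Gamma_D)} \to 0$. Since $\{u_n\}_n$ is bounded in $H^1(\Omega)$ and $\Omega$ is bounded with Lipschitz boundary, by Rellich--Kondrachov I may pass to a subsequence (not relabeled) with $u_n \rightharpoonup u$ in $H^1(\Omega)$ and $u_n \to u$ strongly in $L^2(\Omega)$; moreover, the trace operator $H^1(\Omega) \to L^2(\partial \Omega)$ is compact, so $u_n \to u$ strongly in $L^2(\partial \Omega)$ as well. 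Weak lower semicontinuity gives $\|\nabla u\|_{L^2(\Omega;\RR^N)} \le \liminf_n \|\nabla u_n\|_{L^2(\Omega;\RR^N)} = 0$, whence $\nabla u = 0$ and, by connectedness of $\Omega$ from \eqref{H0}$(i)$, the function $u$ is constant. The strong trace convergence then yields $\|u\|_{L^2(\Gamma_D)} = \lim_n \|u_n\|_{L^2(\Gamma_D)} = 0$, and since $\Gamma_D$ is nonempty and relatively open by \eqref{H0}$(iii)$ it has positive $\mathcal{H}^{N-1}$-measure, forcing the constant $u$ to vanish. Consequently $u_n \to 0$ in $L^2(\Omega)$ and $\nabla u_n \to 0$ in $L^2(\Omega;\RR^N)$, so $\|u_n\|_{H^1(\Omega)} \to 0$, contradicting $\|u_n\|_{H^1(\Omega)} = 1$.

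The hard part is this lower bound, and the delicate point within it is guaranteeing that \emph{both} the interior and the boundary $L^2$-norms pass to the limit along the extracted subsequence. This rests on the compactness of the embedding $H^1(\Omega) \hookrightarrow L^2(\Omega)$ and of the trace operator $H^1(\Omega) \to L^2(\partial \Omega)$, both of which exploit the Lipschitz regularity in \eqref{H0}$(ii)$; the roles of connectedness and of the nonemptiness of $\Gamma_D$ are precisely to rule out a nonzero constant as the limit, closing the contradiction.
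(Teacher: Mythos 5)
Your proof is correct. The paper in fact states this lemma without proof, introducing it as a ``well-known result,'' and your argument --- the trace inequality for the direction $\niii{u}_{H^1(\Omega)} \le \kappa_1^{-1}\|u\|_{H^1(\Omega)}$, and a Rellich--Kondrachov compactness-contradiction for the Poincar\'e-type direction, using weak lower semicontinuity of the gradient norm, connectedness of $\Omega$ to force the limit to be constant, and the positive $\mathcal{H}^{N-1}$-measure of the nonempty relatively open set $\Gamma_D$ to force that constant to vanish --- is precisely the canonical proof that the paper implicitly invokes, with every hypothesis of (\ref{H0}) used where it is needed.
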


\begin{proof}[Proof of \Cref{existenceFe}]
By H\"older's inequality, we have that for every $\e \in (0,1)$ and for every $u \in H^1(\Omega)$,
\begin{equation}
\label{energylowerbound}
\F_{\e}(u) \ge \frac{1}{2}\|\nabla u\|^2_{L^2(\Omega;\RR^N)} - \|f\|_{L^2(\Omega)}\|u\|_{L^2(\Omega)} + \frac{1}{2}\|u - g\|^2_{L^2(\Gamma_D)}.
\end{equation}
Young's inequality then implies
\begin{equation}
\label{u-g}
\|u - g\|_{L^2(\Gamma_D)}^2 = \|u\|_{L^2(\Gamma_D)}^2 + \|g\|_{L^2(\Gamma_D)}^2 -2\int_{\Gamma_D}ug\,d\mathcal{H}^{N-1} \ge \frac{1}{2} \|u\|_{L^2(\Gamma_D)}^2 - 7\|g\|_{L^2(\Gamma_D)}^2,
\end{equation}
and thus, combining the estimates (\ref{energylowerbound}) and (\ref{u-g}) with \Cref{equivnorm}, we obtain
\[
\F_{\e}(u) \ge \frac{1}{4}\niii{u}_{H^1(\Omega)}^2 - \kappa_2\|f\|_{L^2(\Omega)}\niii{u}_{H^1(\Omega)} - \frac{7}{2}\|g\|_{L^2(\Gamma_D)}^2.
\]
In turn, 
\[
\inf\{\F_{\e}(u) : \e \in (0,1), u \in L^2(\Omega)\} > -\infty
\]
and for every $\e \in (0,1)$ the functional $\F_{\e}$ is coercive. Since $\F_{\e}$ is lower semicontinuous with respect to weak convergence in $L^2(\Omega)$, the existence of a global minimizer $u_{\e}$ follows from the direct method in the calculus of variations and the assertion about uniqueness is a consequence of the strict convexity of the functional $\F_{\e}$. Moreover, one can check that $u_{\e}$ is a weak solution to (\ref{Pe}) by considering variations of the functional $\F_{\e}$. We omit the details.
\end{proof}

\begin{prop}[Compactness]
\label{compactness}
Under the assumptions of Theorem \ref{0gc}, if $\e_n \to 0^+$ and $u_n$ are such that 
\[
\sup\{\F_{\e_n}(u_n) : n \in \NN\} < \infty,
\]
then there exist a subsequence $\{u_{n_k}\}_{k}$ of $\{u_n\}_n$, $u \in V$ and $v \in L^2(\Gamma_D)$ such that  
\begin{align*}
u_{n_k} \rightharpoonup &\ u \quad  \text{ in } H^1(\Omega), \\
\e_{n_k}^{-1/2}(u_{n_k} - g) \rightharpoonup &\ v \quad \text{ in } L^2(\Gamma_D).
\end{align*}
\end{prop}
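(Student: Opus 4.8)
The plan is to exploit the coercivity estimate already established in the proof of \Cref{existenceFe} to obtain an $H^1$ bound, and then to isolate the boundary penalization term in order to control the trace.

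First I would observe that since $\F_{\e_n}(u_n) \le M < \infty$ for all $n$, each $u_n$ necessarily lies in $H^1(\Omega)$ (otherwise $\F_{\e_n}(u_n) = +\infty$). As $\e_n \in (0,1)$ eventually, the lower bound derived in the proof of \Cref{existenceFe},
\[
\F_{\e_n}(u_n) \ge \frac{1}{4}\niii{u_n}_{H^1(\Omega)}^2 - \kappa_2\|f\|_{L^2(\Omega)}\niii{u_n}_{H^1(\Omega)} - \frac{7}{2}\|g\|_{L^2(\Gamma_D)}^2,
\]
yields a uniform bound on $\niii{u_n}_{H^1(\Omega)}$ through the resulting quadratic inequality. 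By the norm equivalence in \Cref{equivnorm}, the sequence $\{u_n\}_n$ is bounded in $H^1(\Omega)$, so after passing to a subsequence (not relabeled) we obtain $u_{n_k} \rightharpoonup u$ in $H^1(\Omega)$ for some $u \in H^1(\Omega)$.

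Next I would isolate the boundary term. Writing $\F_{\e_n}(u_n)$ as the sum of the bulk integral and $\frac{1}{2\e_n}\|u_n - g\|^2_{L^2(\Gamma_D)}$, and using that the bulk integral is bounded below by $-\|f\|_{L^2(\Omega)}\|u_n\|_{L^2(\Omega)}$, hence uniformly bounded below by the $H^1$ estimate, the energy bound forces
\[
\frac{1}{2\e_n}\|u_n - g\|^2_{L^2(\Gamma_D)} \le C
\]
for some $C$ independent of $n$. This single inequality carries both remaining conclusions. On one hand it gives $\|u_n - g\|^2_{L^2(\Gamma_D)} \le 2C\e_n \to 0$, so $u_n \to g$ strongly in $L^2(\Gamma_D)$; since the trace operator $H^1(\Omega) \to L^2(\Gamma_D)$ is bounded and linear, hence weakly continuous, the convergence $u_{n_k} \rightharpoonup u$ forces the traces to converge weakly to the trace of $u$, and by uniqueness of weak limits $u = g$ on $\Gamma_D$, i.e.\ $u \in V$. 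On the other hand, rewriting the inequality as $\|\e_n^{-1/2}(u_n - g)\|^2_{L^2(\Gamma_D)} \le 2C$ shows that $\{\e_{n_k}^{-1/2}(u_{n_k} - g)\}_k$ is bounded in $L^2(\Gamma_D)$; extracting one further subsequence and invoking weak compactness in the Hilbert space $L^2(\Gamma_D)$ produces $v \in L^2(\Gamma_D)$ with $\e_{n_k}^{-1/2}(u_{n_k} - g) \rightharpoonup v$, as claimed.

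This argument is essentially routine, and no quantitative rate on $\e_n$ is needed anywhere; the whole proof rests on the coercivity estimate of \Cref{existenceFe} together with the elementary observation that a bounded energy and a bounded bulk part leave the boundary penalty bounded. The only point requiring a little care is the identification $u = g$ on $\Gamma_D$: rather than appealing to compactness of the trace, the cleanest route is to combine the strong convergence $u_n \to g$ in $L^2(\Gamma_D)$ with the weak continuity of the trace under weak $H^1$-convergence, which immediately pins down the trace of the weak limit.
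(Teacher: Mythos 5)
Your proof is correct and follows essentially the same route as the paper: coercivity via \Cref{equivnorm} gives the uniform $H^1$ bound, the energy bound then forces $\frac{1}{2\e_n}\|u_n - g\|^2_{L^2(\Gamma_D)} \le C$, and weak compactness plus the strong convergence $u_n \to g$ in $L^2(\Gamma_D)$ identifies $u \in V$. The only (immaterial) difference is bookkeeping: the paper keeps the boundary penalty inside one combined inequality by splitting $\frac{1}{2\e_n} \ge \frac{1}{4} + \frac{1}{4\e_n}$, whereas you first extract the $H^1$ bound and then feed it back to bound the boundary term.
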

\begin{proof}
Let $M \coloneqq \sup_n\F_{\e_n}(u_n)$ and assume without loss of generality that $\e_1 \le 1$. Reasoning as in the proof of \Cref{existenceFe}, by H\"older's inequality we see that
\begin{equation}
\label{cptestimate}
M \ge \frac{1}{2}\|\nabla u_n\|_{L^2(\Omega;\RR^N)}^2 - \|f\|_{L^2(\Omega)}\|u_n\|_{L^2(\Omega)} + \frac{1}{2\e_n}\|u_n - g\|^2_{L^2(\Gamma_D)}
\end{equation}
for every $n \in \NN$. Young's inequality, together with the fact that $\e_n \le 1$, then implies that
\begin{equation}
\begin{aligned}
\label{un-g}
\frac{1}{2\e_n}\|u_n - g\|^2_{L^2(\Gamma_D)} \ge &\ \frac{1}{4}\|u_n - g\|^2_{L^2(\Gamma_D)} + \frac{1}{4\e_n}\|u_n - g\|^2_{L^2(\Gamma_D)} \\
\ge &\ \frac{1}{8}\|u_n\|_{L^2(\Gamma_D)}^2 - \frac{7}{4}\|g\|_{L^2(\Gamma_D)}^2 + \frac{1}{4\e_n}\|u_n - g\|^2_{L^2(\Gamma_D)},
\end{aligned}
\end{equation}
and thus, combining the estimates (\ref{cptestimate}) and (\ref{un-g}) with \Cref{equivnorm}, and using the notation (\ref{notationnorm}), we arrive at
\[
M \ge \frac{1}{8}\niii{u_n}_{H^1(\Omega)}^2 - \kappa_2\|f\|_{L^2(\Omega)}\niii{u_n}_{H^1(\Omega)} - \frac{7}{4}\|g\|_{L^2(\Gamma_D)}^2 + \frac{1}{4\e_n}\|u_n - g\|^2_{L^2(\Gamma_D)}.
\]
Consequently, $\{u_n\}_n$ is bounded in $H^1(\Omega)$ by \Cref{equivnorm}, and furthermore $\{\e_n^{-1/2}(u_n - g)\}_n$ is bounded in $L^2(\Gamma_D)$. Hence there are a functions $u \in H^1(\Omega)$, $v \in L^2(\Gamma_D)$ and a subsequence $\{u_{n_k}\}_k$ of $\{u_n\}_n$ as in the statement. To conclude we notice that $u_n \to g$ in $L^2(\Gamma_D)$, and so $u \in V$.
\end{proof}

\begin{proof}[Proof of \Cref{0gc}]
Let $\e_n \to 0^+$ and $\{u_n\}_n$ be a sequence of functions in $L^2(\Omega)$ such that $u_n \to u$ in $L^2(\Omega)$. If $\liminf_{n \to \infty} \F_{\e_n}(u_n) = \infty$ there is nothing to prove. Hence, up to the extraction of a subsequence (not relabeled), we can assume without loss of generality that 
\[
\liminf_{n \to \infty} \F_{\e_n}(u_n) = \lim_{n \to \infty} \F_{\e_n}(u_n) < \infty.
\]
In particular, $\F_{\e_n}(u_n) < \infty$ for every $n$ sufficiently large. Let $\{u_{n_k}\}_k$ and $u$ be given as in \Cref{compactness}, then 
\begin{align*}
\liminf_{k \to \infty} \F_{\e_{n_k}}(u_{n_k}) \ge \liminf_{k \to \infty} \int_{\Omega}\left(\frac{1}{2}|\nabla u_{n_k}|^2 + fu_{n_k}\right)\,d\bm{x}
\ge \frac{1}{2}\int_{\Omega}|\nabla u|^2\,d\bm{x} + \int_{\Omega}fu\,d\bm{x} = \F_0(u).
\end{align*}

On the other hand, for every $u \in L^2(\Omega)$, the constant sequence $u_n = u$ is a recovery sequence. Indeed, $\F_{\e_n}(u) = \F_0(u)$ for every $u \in V$, while if $u \notin V$ then $\F_0(u) = \infty$ and hence there is nothing to prove.
\end{proof}

\begin{cor}
\label{convofmin}
Under the assumptions of Theorem \ref{0gc}, if $\e_n \to 0^+$ and $\{u_n\}_n$ is a sequence of functions in $L^2(\Omega)$ such that 
\[
\limsup_{n \to \infty}\F_{\e_n}(u_n) \le \inf\left\{\F_0(v) : v \in L^2(\Omega)\right\}
\] 
then $u_n \to u_0$ strongly in $H^1(\Omega)$, where $u_0$ is the unique global minimizer of $\F_0$. In particular, global minimizers $u_{\e_n}$ of $\F_{\e_n}$ converge in $H^1(\Omega)$ to $u_0$.
\end{cor}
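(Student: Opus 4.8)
The plan is to derive the corollary from the standard machinery of $\Gamma$-convergence: the liminf inequality of \Cref{0gc} together with the compactness of \Cref{compactness}, augmented by the strict convexity of $\F_0$ (to get uniqueness) and by a norm-convergence argument (to upgrade weak convergence to strong).

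First I would establish that $\inf\{\F_0(v) : v \in L^2(\Omega)\}$ is finite and attained at a unique $u_0 \in V$. Finiteness holds because $V \neq \emptyset$ (any $H^1(\Omega)$ extension of $g$ belongs to $V$), so that $\inf \F_0 = \inf_{v \in V}\F_0(v) < \infty$. Existence of a minimizer follows from the direct method, using \Cref{equivnorm} for coercivity on the affine space $V$ and weak lower semicontinuity of the Dirichlet energy. For uniqueness I would exploit the identity $\F_0\big((u_0 + u_0')/2\big) = \tfrac12\F_0(u_0) + \tfrac12\F_0(u_0') - \tfrac18\|\nabla(u_0 - u_0')\|_{L^2(\Omega;\RR^N)}^2$, valid for $u_0, u_0' \in V$: if both are minimizers, the right-hand side cannot drop below the minimum, forcing $\nabla(u_0 - u_0') = 0$; since $u_0 - u_0' = 0$ on the nonempty set $\Gamma_D$, \Cref{equivnorm} gives $u_0 = u_0'$.

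Next, from the hypothesis $\limsup_n \F_{\e_n}(u_n) \le \inf \F_0 < \infty$ the energies are bounded along a tail, so \Cref{compactness} provides a subsequence $u_{n_k} \rightharpoonup u$ in $H^1(\Omega)$ with $u \in V$. The liminf inequality of \Cref{0gc} then yields $\F_0(u) \le \liminf_k \F_{\e_{n_k}}(u_{n_k}) \le \limsup_n \F_{\e_n}(u_n) \le \inf \F_0$, so $u$ is a minimizer and hence $u = u_0$. Since the same limit $u_0$ arises from every subsequence, the usual subsequence argument upgrades this to $u_n \rightharpoonup u_0$ in $H^1(\Omega)$ for the whole sequence, and the chain of inequalities collapses to $\lim_n \F_{\e_n}(u_n) = \F_0(u_0)$.

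The decisive step is the passage from weak to strong convergence. Writing $\F_{\e_n}(u_n)$ as the bulk part $\tfrac12\|\nabla u_n\|_{L^2(\Omega;\RR^N)}^2 + \int_\Omega f u_n$ plus the nonnegative boundary penalization, and noting that $u_n \to u_0$ strongly in $L^2(\Omega)$ by Rellich so that $\int_\Omega f u_n \to \int_\Omega f u_0$, discarding the (nonnegative) boundary term gives $\limsup_n \tfrac12\|\nabla u_n\|_{L^2(\Omega;\RR^N)}^2 \le \F_0(u_0) - \int_\Omega f u_0 = \tfrac12\|\nabla u_0\|_{L^2(\Omega;\RR^N)}^2$. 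Combined with weak lower semicontinuity $\liminf_n \|\nabla u_n\|_{L^2(\Omega;\RR^N)}^2 \ge \|\nabla u_0\|_{L^2(\Omega;\RR^N)}^2$, this forces $\|\nabla u_n\|_{L^2(\Omega;\RR^N)} \to \|\nabla u_0\|_{L^2(\Omega;\RR^N)}$; since $\nabla u_n \rightharpoonup \nabla u_0$ in the Hilbert space $L^2(\Omega;\RR^N)$, convergence of the norms promotes weak to strong convergence of the gradients, and with the $L^2$ convergence of $u_n$ this yields $u_n \to u_0$ in $H^1(\Omega)$. Finally, the \emph{in particular} assertion is immediate: if $u_{\e_n}$ minimizes $\F_{\e_n}$, then for every fixed $v \in V$ optimality and the recovery sequence (the constant sequence $v$, for which $\F_{\e_n}(v) = \F_0(v)$) give $\F_{\e_n}(u_{\e_n}) \le \F_{\e_n}(v) = \F_0(v)$, and taking the infimum over $v \in V$ verifies the hypothesis. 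The one point demanding care is precisely this strong-convergence upgrade, which relies on the limit energy being exactly $\F_0(u_0)$ and on the sign of the boundary term, rather than on any quantitative rate.
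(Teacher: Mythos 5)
Your proof is correct and follows essentially the same route as the paper: compactness plus the $\Gamma$-liminf inequality and a subsequence argument to identify the weak limit as the unique minimizer $u_0$, then the energy squeeze (discarding the nonnegative boundary term) to obtain convergence of the gradient norms, upgraded to strong $H^1$ convergence via the Radon--Riesz property, exactly as in the paper's appeal to the strict convexity of the $L^2$-norm. Your added details (the explicit convexity identity for uniqueness and the verification of the \emph{in particular} claim via the constant recovery sequence) are fine elaborations of steps the paper leaves implicit.
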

\begin{proof}
Since $g \in H^{1/2}(\partial \Omega)$, by standard trace theorems (see Theorem 18.40 in \cite{leoni}) the space $V$ defined in (\ref{Vspace}) is nonempty. In turn, the strictly convex functional $\F_0$ given in (\ref{F0}) admits a unique minimizer $u_0$ which is a weak solution to (\ref{P0}). Let $\{u_n\}_n$ be a sequence of functions in $L^2(\Omega)$ such that 
\begin{equation}
\label{limsup2.4}
\limsup_{n \to \infty} \F_{\e_n}(u_n) \le \F_0(u_0).
\end{equation}
Given a subsequence $\{\e_{n_k}\}_k$ of $\{\e_n\}_n$, by \Cref{compactness} we can find a further subsequence $\{u_{n_{k_j}}\}_j$ and $v_0 \in V$ such that $u_{n_{k_j}} \to v_0$. By $\Gamma$-convergence 
\[
\F_0(u_0) \ge \limsup_{j \to \infty} \F_{\e_{n_{k_j}}}(u_{n_{k_j}}) \ge F_0(v_0),
\]
which in turn implies that $v_0 = u_0$. Hence the full sequence $\{u_n\}_n$ converges in $L^2(\Omega)$ to $u_0$. Moreover, by (\ref{limsup2.4})
\begin{align*}
\F_0(u_0) \ge \limsup_{n \to \infty} \F_{\e_n}(u_n) \ge &\  \limsup_{n \to \infty} \int_{\Omega} \left(\frac{1}{2}|\nabla u_n|^2 + fu_n\right)\,d\bm{x} \\
\ge &\ \liminf_{n \to \infty}\frac{1}{2}\int_{\Omega}|\nabla u_n|^2\,d\bm{x} + \int_{\Omega}fu_0\,d\bm{x} \ge \F_0(u_0),
\end{align*}
and so
\[
\lim_{n \to \infty}\int_{\Omega}|\nabla u_n|^2\,d\bm{x}  = \int_{\Omega}|\nabla u_0|^2\,d\bm{x}.
\]
By the strict convexity of the $L^2$-norm it follows that $\nabla u_n \to \nabla u_0$ in $L^2(\Omega; \RR^2)$.
\end{proof}
\section{A problem without singularities}
Following Costabel and Dauge \cite{cd96}, in this section we will be concerned with the study of the easier case of the non-mixed problem (\ref{Re}); to be precise, it is assumed throughout the section that $\Gamma_D = \partial \Omega$. Under this additional assumption we prove asymptotic developments by $\Gamma$-convergence of all orders for the family of functionals $\{\F_{\e}\}_{\e}$ and deduce a complete asymptotic expansion for $u_{\e}$, i.e the solution to (\ref{Re}) (see \Cref{existenceFe}). Throughout the section, we will make the following assumptions on the set $\Omega$: 
\begin{equation}
\label{Hj}\tag{$H_j$}
\left\{
\arraycolsep=1.4pt\def\arraystretch{1.6}
\begin{array}{rl}
(i) & \Omega \text{ is an open, bounded and connected subset of } \RR^N, \\
(ii) & \partial \Omega \text{ is of class } C^{j,1}.  
\end{array}
\right.
\end{equation}

\subsection{The non-mixed problem: Gamma-convergence of order one}
In this section we prove a first order asymptotic expansion for $\F_{\e}$. We begin by studying the compactness properties of sequences with bounded energy.
\begin{prop}[Compactness]
\label{1compactnessJ}
Let $\Omega$ be as in $(H_1)$, $f \in L^2(\Omega)$, $g \in H^{3/2}(\partial \Omega)$, $\F_{\e}$ and $\F_0$ be the functionals defined in $(\ref{Fe})$ and $(\ref{F0})$ (with $\Gamma_D = \partial \Omega$), respectively, and define
\begin{equation}
\label{Fe1J}
\F_{\e}^{(1)} \coloneqq \frac{\F_{\e} - \min\F_0}{\e}.
\end{equation}
If $\e_n \to 0^+$ and $v_n \in L^2(\Omega)$ are such that 
\[
\sup\{\F_{\e_n}^{(1)}(v_n) : n \in \NN\} < \infty,
\]
then $u_n \to u_0$ in $H^1(\Omega)$ and there exist a subsequence $\{v_{n_k}\}_k$ of $\{v_n\}_n$, $r_0 \in H^1(\Omega)$ and $v_0 \in L^2(\partial \Omega)$ such that
\begin{equation}
\begin{aligned}
\label{1cptJ}
\frac{v_{n_k} - u_0}{\sqrt{\e_{n_k}}} \rightharpoonup &\ r_0 \quad \text{ in } H^1(\Omega), \\
\frac{v_{n_k} - u_0}{\e_{n_k}} \rightharpoonup &\ v_0 \quad \text{ in } L^2(\partial \Omega),
\end{aligned}
\end{equation}
where $u_0$ is the solution to $(\ref{R0})$.
\end{prop}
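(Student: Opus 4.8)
The plan is to first upgrade the energy bound to strong $H^1$ convergence $v_n \to u_0$ by invoking the zeroth-order theory, and then to read off the two rescaled limits from an exact expansion of $\F_{\e_n}$ about $u_0$. The feature that makes the non-mixed case substantially simpler than \Cref{1gc-cpt} is that here $\Gamma_D = \partial\Omega$ is of class $C^{1,1}$ with no corner at which the boundary condition changes type; consequently elliptic regularity for the Dirichlet problem \eqref{R0} gives $u_0 \in H^2(\Omega)$, so that $\partial_\nu u_0$ is well defined and belongs to $L^2(\partial\Omega)$. This is precisely what removes the logarithmic corrections present in the singular setting and produces the clean scalings $\sqrt{\e_n}$ and $\e_n$.

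First I would note that the hypothesis $\sup_n \F_{\e_n}^{(1)}(v_n) =: M < \infty$ is exactly the statement that $\F_{\e_n}(v_n) \le \min \F_0 + M\e_n$, whence $\limsup_n \F_{\e_n}(v_n) \le \min \F_0 = \inf \F_0$. \Cref{convofmin} then applies directly and yields $v_n \to u_0$ strongly in $H^1(\Omega)$, which is the first assertion.

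For the rates, set $w_n := v_n - u_0$. Since $u_0 = g$ on $\partial\Omega$ the penalization becomes $\frac{1}{2\e_n}\int_{\partial\Omega}w_n^2\,d\mathcal{H}^{N-1}$, and integrating by parts with $\Delta u_0 = f$ yields the exact identity
\[
\F_{\e_n}(v_n) - \min \F_0 = \frac12\int_\Omega|\nabla w_n|^2\,d\bm{x} + \int_{\partial\Omega}(\partial_\nu u_0)\,w_n\,d\mathcal{H}^{N-1} + \frac{1}{2\e_n}\int_{\partial\Omega}w_n^2\,d\mathcal{H}^{N-1}.
\]
The left-hand side is at most $M\e_n$. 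I would absorb the linear boundary term into the penalization by Young's inequality, $|\int_{\partial\Omega}(\partial_\nu u_0)w_n\,d\mathcal{H}^{N-1}| \le \frac{1}{4\e_n}\|w_n\|_{L^2(\partial\Omega)}^2 + \e_n\|\partial_\nu u_0\|_{L^2(\partial\Omega)}^2$, which, after setting $K := \|\partial_\nu u_0\|_{L^2(\partial\Omega)}^2 < \infty$, leaves
\[
\frac12\|\nabla w_n\|_{L^2(\Omega;\RR^N)}^2 + \frac{1}{4\e_n}\|w_n\|_{L^2(\partial\Omega)}^2 \le (M + K)\e_n.
\]
Hence $\|\nabla w_n\|_{L^2(\Omega;\RR^N)}^2 = \mathcal{O}(\e_n)$ and $\|w_n\|_{L^2(\partial\Omega)}^2 = \mathcal{O}(\e_n^2)$.

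To promote the gradient bound to a bound for the full $H^1$-norm, I would invoke the equivalent norm $\niii{\cdot}_{H^1(\Omega)}$ of \Cref{equivnorm} with $\Gamma_D = \partial\Omega$: since $\niii{w_n}_{H^1(\Omega)}^2 = \|\nabla w_n\|_{L^2(\Omega;\RR^N)}^2 + \|w_n\|_{L^2(\partial\Omega)}^2 = \mathcal{O}(\e_n)$, the sequence $\e_n^{-1/2}w_n$ is bounded in $H^1(\Omega)$, while $\e_n^{-2}\|w_n\|_{L^2(\partial\Omega)}^2 = \mathcal{O}(1)$ shows $\e_n^{-1}w_n$ is bounded in $L^2(\partial\Omega)$. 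Weak sequential compactness in $H^1(\Omega)$ and in $L^2(\partial\Omega)$ then produces a subsequence along which $\e_{n_k}^{-1/2}(v_{n_k}-u_0)\rightharpoonup r_0$ in $H^1(\Omega)$ and $\e_{n_k}^{-1}(v_{n_k}-u_0)\rightharpoonup v_0$ in $L^2(\partial\Omega)$, as claimed in \eqref{1cptJ}. The only genuinely delicate ingredient is the $H^2$-regularity of $u_0$, which underpins both the integration by parts and the finiteness of $K$; everything else reduces to one application of Young's inequality and the norm equivalence. I therefore expect the regularity of $u_0$ — exactly where the $C^{1,1}$ assumption $(H_1)$ and $g \in H^{3/2}(\partial\Omega)$ enter — to be the conceptual crux rather than any computation.
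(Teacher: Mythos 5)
Your proof is correct and follows essentially the same route as the paper: strong convergence via \Cref{convofmin}, the exact expansion of $\F_{\e_n}$ about $u_0$ using $H^2$-regularity and the divergence theorem, and then absorption of the linear boundary term --- where you use Young's inequality, the paper completes the square in $(r_n + \partial_\nu u_0)$, which is the same algebra and yields the same bounds $\|\nabla(v_n - u_0)\|_{L^2(\Omega;\RR^N)}^2 = \mathcal{O}(\e_n)$ and $\|v_n - u_0\|_{L^2(\partial\Omega)}^2 = \mathcal{O}(\e_n^2)$. Your explicit final step via \Cref{equivnorm} and weak compactness is left implicit in the paper (``proved at once''), so if anything your write-up is slightly more detailed.
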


\begin{proof}
If we let $M \coloneqq \sup\{\F_{\e_n}^{(1)}(v_n) : n \in \NN\}$, then $\F_{\e}(v_n) \le \F_0(u_0) + \e_nM$. On the other hand, 
\[
\liminf_{n \to \infty}\F_{\e_n}(v_n) \ge \F_0(u_0)
\]
by \Cref{0gc}, and in turn $v_n \to u_0$ strongly in $H^1(\Omega)$ by \Cref{convofmin}.

For every $n \in \NN$, let $r_n \in L^2(\Omega)$ be such that $v_n = u_0 + \e_n r_n$. Then $\F_{\e_n}^{(1)}(v_n)$ can be rewritten as
\begin{equation}
\label{F1evnJ}
\F_{\e_n}^{(1)}(v_n) = \int_{\Omega}\left(\nabla u_0 \cdot \nabla r_n + \frac{\e_n}{2}|\nabla r_n|^2 + fr_n\right)\, d\bm{x} + \frac{1}{2}\int_{\partial \Omega}r_n^2\,d\mathcal{H}^{N-1}.
\end{equation}
Since $\partial \Omega$ is of class $C^{1,1}$, $f \in L^2(\Omega)$, and $g \in H^{3/2}(\partial \Omega)$, by standard elliptic regularity theory for (\ref{R0}), $u_0 \in H^2(\Omega)$ (see Theorem 2.4.2.5 in \cite{grisvard85}) and by an application of the divergence theorem we have
\begin{equation}
\label{divthmF1eJ}
\int_{\Omega}\left(\nabla u_0 \cdot \nabla r_n + f r_n \right)\,d\bm{x} = \int_{\partial \Omega}\partial_{\nu}u_0 r_n\,d\mathcal{H}^{N-1}.
\end{equation}
Substituting (\ref{divthmF1eJ}) into (\ref{F1evnJ}) we arrive at
\begin{equation}
\label{Fe1vn}
\begin{aligned}
M \ge \F_{\e_n}^{(1)}(v_n) = &\ \frac{\e_n}{2}\int_{\Omega}|\nabla r_n|^2\,d\bm{x} + \int_{\partial \Omega}\left(\frac{1}{2}r_n^2 + \partial_{\nu}u_0 r_n\right)\,d\mathcal{H}^{N-1} \\
= &\ \frac{\e_n}{2}\int_{\Omega}|\nabla r_n|^2\,d\bm{x} + \frac{1}{2}\int_{\partial \Omega}\left(r_n + \partial_{\nu}u_0\right)^2\,d\mathcal{H}^{N-1} - \frac{1}{2}\int_{\partial \Omega}\left(\partial_{\nu}u_0\right)^2\,d\mathcal{H}^{N-1},
\end{aligned}
\end{equation}
and (\ref{1cptJ}) is proved at once.
\end{proof}

\begin{thm}[$1$st order $\Gamma$-convergence]
\label{1gcJ}
Under the assumptions of \Cref{1compactnessJ}, the family $\{\F_{\e}^{(1)}\}_{\e}$ $\Gamma$-converges in $L^2(\Omega)$ to the functional
\begin{equation}
\label{F1J}
\F_1(v) \coloneqq 
\left\{
\arraycolsep=1.4pt\def\arraystretch{1.6}
\begin{array}{ll}
\displaystyle-\frac{1}{2}\int_{\partial \Omega}\left(\partial_{\nu}u_0\right)^2\,d\mathcal{H}^{N-1} & \text{ if } v = u_0, \\
+ \infty & \text{ otherwise}.
\end{array}
\right.
\end{equation}
In particular, if $u_{\e} \in H^1(\Omega)$ is the solution to $(\ref{Re})$, then 
\begin{equation}
\label{Fe1JTay}
\F_{\e}(u_{\e}) = \F_0(u_0) + \e \F_1(u_0) +  o\left(\e\right).
\end{equation}
\end{thm}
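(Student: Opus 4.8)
The plan is to extract both $\Gamma$-convergence inequalities directly from the completed-square identity \eqref{Fe1vn} established in the proof of \Cref{1compactnessJ}, and then to deduce the energy expansion \eqref{Fe1JTay} from the convergence of minima. First I would record that, by the elliptic regularity invoked there, $u_0 \in H^2(\Omega)$, so that $\partial_{\nu}u_0 \in H^{1/2}(\partial\Omega) \subset L^2(\partial\Omega)$ and the identity \eqref{Fe1vn} is meaningful.

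For the liminf inequality I would fix $v_n \to v$ in $L^2(\Omega)$ and assume $\liminf_n \F^{(1)}_{\e_n}(v_n) < \infty$ (otherwise there is nothing to prove); after passing to a subsequence realizing the liminf and discarding the indices with $v_n \notin H^1(\Omega)$, the energies are uniformly bounded, so \Cref{1compactnessJ} forces $v_n \to u_0$ strongly in $H^1(\Omega)$ and hence $v = u_0$. Thus when $v \neq u_0$ the liminf is $+\infty = \F_1(v)$, while when $v = u_0$ I would write $r_n \coloneqq (v_n - u_0)/\e_n$ and read off from \eqref{Fe1vn} that $\F^{(1)}_{\e_n}(v_n)$ equals $\tfrac{\e_n}{2}\int_\Omega|\nabla r_n|^2\,d\bm{x} + \tfrac12\int_{\partial\Omega}(r_n + \partial_{\nu}u_0)^2\,d\mathcal{H}^{N-1} - \tfrac12\int_{\partial\Omega}(\partial_{\nu}u_0)^2\,d\mathcal{H}^{N-1}$; since the first two terms are nonnegative, $\F^{(1)}_{\e_n}(v_n) \ge \F_1(u_0)$ for every $n$, and the liminf inequality follows.

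For the limsup inequality it is enough to build a recovery sequence at $v = u_0$ (for $v \neq u_0$ the bound is automatic since $\F_1(v) = +\infty$). The idea is to choose the perturbation so that the middle term in \eqref{Fe1vn} cancels exactly: let $u_1 \in H^1(\Omega)$ solve $\Delta u_1 = 0$ in $\Omega$ with $u_1 = -\partial_{\nu}u_0$ on $\partial\Omega$ (which exists because $-\partial_{\nu}u_0 \in H^{1/2}(\partial\Omega)$; in fact any $H^1$-extension of $-\partial_{\nu}u_0$ would serve), and set $v_n \coloneqq u_0 + \e_n u_1 \to u_0$ in $L^2(\Omega)$. Then $r_n = u_1$ has trace $-\partial_{\nu}u_0$, so
\[
\F^{(1)}_{\e_n}(v_n) = \frac{\e_n}{2}\int_\Omega|\nabla u_1|^2\,d\bm{x} - \frac12\int_{\partial\Omega}(\partial_{\nu}u_0)^2\,d\mathcal{H}^{N-1} \longrightarrow \F_1(u_0),
\]
and the limsup inequality holds, indeed with equality in the limit.

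Finally, for \eqref{Fe1JTay} I would invoke the fundamental theorem of $\Gamma$-convergence. Since $u_\e$ minimizes $\F_\e$ by \Cref{existenceFe}, it minimizes $\F^{(1)}_\e$, and $\min\F^{(1)}_\e = (\F_\e(u_\e) - \min\F_0)/\e$. The recovery sequence gives $\limsup_n\min\F^{(1)}_{\e_n} \le \F_1(u_0)$; this in particular bounds the energies of the minimizers, so \Cref{1compactnessJ} applies to $u_{\e_n}$ and the liminf inequality yields $\liminf_n\min\F^{(1)}_{\e_n} \ge \F_1(u_0)$. Hence $\min\F^{(1)}_\e \to \F_1(u_0) = \min\F_1$ along every sequence $\e_n \to 0^+$, which rearranges into \eqref{Fe1JTay}. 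I do not expect a serious obstacle here: the argument is driven entirely by the completed square, and the only points needing care are the $H^1$-solvability of the problem for $u_1$ (guaranteed by $u_0 \in H^2(\Omega)$) and the routine equi-coercivity needed for convergence of minima, which is exactly supplied by \Cref{1compactnessJ}.
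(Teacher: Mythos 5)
Your proposal is correct and follows essentially the same route as the paper: both arguments hinge on the completed-square identity \eqref{Fe1vn}, use \Cref{1compactnessJ} to force $v = u_0$ when energies stay bounded, and recover $u_0$ via $v_n = u_0 + \e_n w$ with $w$ an $H^1$-extension of $-\partial_{\nu}u_0$ (your harmonic choice $u_1$ versus the paper's arbitrary extension is immaterial, as you note). The only cosmetic difference is that the paper deduces \eqref{Fe1JTay} by citing Theorem 1.2 in \cite{MR1202527}, whereas you unwind that citation into the standard convergence-of-minima argument, which is valid.
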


\begin{proof}
Let $\e_n \to 0^+$ and $\{v_n\}_n$ be a sequence of functions in $L^2(\Omega)$ such that $v_n \to v$ in $L^2(\Omega)$. Reasoning as in the proof of \Cref{0gc}, we can assume without loss of generality that 
\[
\liminf_{n \to \infty} \F_{\e_n}^{(1)}(v_n) = \lim_{n \to \infty} \F_{\e_n}^{(1)}(v_n) < \infty.
\]
In particular, $\F_{\e_n}^{(1)}(v_n) < \infty$ for every $n$ sufficiently large. Let $\{v_{n_k}\}_k$ be as in \Cref{1compactnessJ}. Then $v_n \to u_0$ in $H^1(\Omega)$ and from (\ref{Fe1vn}) we deduce that
\[
\liminf_{n \to \infty}\F_{\e_n}^{(1)}(v_n) \ge - \frac{1}{2}\int_{\partial \Omega}\left(\partial_{\nu}u_0\right)^2\,d\mathcal{H}^{N-1} = \F_1(u_0).
\]

On the other hand, for every $v \in L^2(\Omega) \setminus \{u_0\}$ the constant sequence $v_n = v$ is a recovery sequence. If now $v = u_0$, since by assumption $\partial_{\nu}u_0 \in H^{1/2}(\partial \Omega)$, we can find $w \in H^1(\Omega)$ such that $w = -\partial_{\nu}u_0$ on $\partial \Omega$, where the equality holds in the sense of traces. Set $v_n \coloneqq u_0 + \e_n w$. Then $v_n \to u_0$ in $H^1(\Omega)$ and again from (\ref{Fe1vn}) it follows that
\[
\lim_{n \to \infty}\F_{\e_n}^{(1)}(v_n) = \lim_{n \to \infty}\frac{\e_n}{2}\int_{\Omega}|\nabla w|^2\,d\bm{x} - \frac{1}{2}\int_{\partial \Omega}\left(\partial_{\nu}u_0\right)^2\,d\mathcal{H}^{N-1} = \F_1(u_0).
\]
This concludes the proof of the $\Gamma$-convergence. The energy expansion (\ref{Fe1JTay}) follows from Theorem 1.2 in \cite{MR1202527}.
\end{proof}

\subsection{The non-mixed problem: Gamma-convergence of order two}
In this section we prove a second order asymptotic expansion for $\F_{\e}$. As customary, we begin by investigating the compactness properties of sequences with bounded energy.
\begin{prop}[Compactness]
\label{2compactnessJ}
Let $\Omega$ be as in $(H_1)$, $f \in L^2(\Omega)$, $g \in H^{3/2}(\partial \Omega)$, $\F_{\e}$, $\F_0$, $\F_{\e}^{(1)}$, and $\F_1$ be as in $(\ref{Fe})$, $(\ref{F0})$, $(\ref{Fe1J})$, and $(\ref{F1J})$, respectively, and define
\[
\F_{\e}^{(2)} \coloneqq \frac{\F_{\e}^{(1)} - \min\F_1}{\e} = \frac{\F_{\e} - \min \F_0 - \e \min \F_1}{\e^2}.
\]
If $\e_n \to 0^+$ and $w_n \in L^2(\Omega)$ are such that 
\[
\sup\{\F_{\e_n}^{(2)}(w_n) : n \in \NN\} < \infty,
\]
then $w_n \to u_0$ in $H^1(\Omega)$ and there exist a subsequence $\{w_{n_k}\}_k$ of $\{w_n\}_n$, $w_0 \in H^1(\Omega)$ and $q_0 \in L^2(\partial \Omega)$ such that   
\begin{align*} 
\frac{w_{n_k} - u_0}{\e_{n_k}} \rightharpoonup &\ w_0 \quad \text{ in } H^1(\Omega), \\
\frac{w_{n_k} - u_0 + \e_{n_k}\partial_{\nu}u_0}{\e_{n_k}^{3/2}} \rightharpoonup &\ q_0 \quad \, \text{ in } L^2(\partial \Omega),
\end{align*}
where $u_0$ is the solution to $(\ref{R0})$. In particular, $w_0 = -\partial_{\nu}u_0$ on $\partial \Omega$ in the sense of traces. 
\end{prop}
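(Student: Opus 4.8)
The plan is to mirror the argument used for the first-order compactness in \Cref{1compactnessJ}, pushing the Taylor-type expansion of the energy one order further. Set $M := \sup_n \F_{\e_n}^{(2)}(w_n)$; in particular $w_n \in H^1(\Omega)$ for all $n$. Since
\[
\F_{\e_n}^{(1)}(w_n) = \min\F_1 + \e_n\F_{\e_n}^{(2)}(w_n) \le \min\F_1 + \e_n M
\]
is bounded from above, \Cref{1compactnessJ} immediately yields $w_n \to u_0$ in $H^1(\Omega)$. The heart of the proof is then an exact algebraic identity for $\F_{\e_n}^{(2)}(w_n)$. Introducing $r_n := (w_n - u_0)/\e_n$, so that $w_n = u_0 + \e_n r_n$, I would expand the bulk integrand and use two facts: first, that $u_0 \in H^2(\Omega)$ by elliptic regularity (Theorem 2.4.2.5 in \cite{grisvard85}, applicable since $\partial\Omega$ is $C^{1,1}$, $f \in L^2(\Omega)$, and $g \in H^{3/2}(\partial\Omega)$), so that the divergence theorem gives $\int_\Omega(\nabla u_0\cdot\nabla r_n + f r_n)\,d\bm{x} = \int_{\partial\Omega}\partial_\nu u_0\, r_n\,d\mathcal{H}^{N-1}$; and second, that $u_0 = g$ on $\partial\Omega$, so that on the boundary $w_n - g = \e_n r_n$ and the penalization term contributes exactly $\tfrac{\e_n}{2}\int_{\partial\Omega}r_n^2\,d\mathcal{H}^{N-1}$.

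Carrying out these substitutions and recalling $\min\F_1 = -\tfrac12\int_{\partial\Omega}(\partial_\nu u_0)^2\,d\mathcal{H}^{N-1}$, I expect the bulk and boundary terms to telescope into the clean identity
\[
\F_{\e_n}^{(2)}(w_n) = \frac{1}{2}\int_\Omega|\nabla r_n|^2\,d\bm{x} + \frac{1}{2\e_n}\int_{\partial\Omega}(r_n + \partial_\nu u_0)^2\,d\mathcal{H}^{N-1}.
\]
The point worth emphasizing is that, because $\Gamma_D = \partial\Omega$ and $u_0 = g$ there, this expansion is exact: no singular correction arises and the first-order corrector does not yet enter the compactness argument (it will only appear in the recovery sequence for the associated $\Gamma$-limit). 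From $\F_{\e_n}^{(2)}(w_n) \le M$ I then read off, using the nonnegativity of each summand, the two a priori estimates $\|\nabla r_n\|_{L^2(\Omega;\RR^N)}^2 \le 2M$ and $\int_{\partial\Omega}(r_n + \partial_\nu u_0)^2\,d\mathcal{H}^{N-1} \le 2M\e_n$.

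The second estimate says that $\{(r_n + \partial_\nu u_0)/\sqrt{\e_n}\}_n$ is bounded in $L^2(\partial\Omega)$, so along a subsequence it converges weakly to some $q_0$; since this quantity equals $(w_{n_k} - u_0 + \e_{n_k}\partial_\nu u_0)/\e_{n_k}^{3/2}$, this is precisely the claimed second convergence. The same estimate forces $r_n \to -\partial_\nu u_0$ strongly in $L^2(\partial\Omega)$, hence $\{r_n\}_n$ is bounded in $L^2(\partial\Omega)$. Combined with the bound on $\|\nabla r_n\|_{L^2}$, the equivalent-norm \Cref{equivnorm} (with $\Gamma_D = \partial\Omega$) shows $\{r_n\}_n$ is bounded in $H^1(\Omega)$, so along a further subsequence $r_{n_k} = (w_{n_k} - u_0)/\e_{n_k} \rightharpoonup w_0$ in $H^1(\Omega)$, which is the first convergence.

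To identify the trace I would invoke compactness of the trace operator $H^1(\Omega)\to L^2(\partial\Omega)$, so that weak $H^1$ convergence of $r_{n_k}$ to $w_0$ yields strong $L^2(\partial\Omega)$ convergence of the traces; since these same traces converge to $-\partial_\nu u_0$, uniqueness of limits gives $w_0 = -\partial_\nu u_0$ on $\partial\Omega$. The only genuinely delicate point is the handling of the two scales on the boundary: the penalization must be recombined into the perfect square $(r_n + \partial_\nu u_0)^2$ so that the $O(1)$ identification $r_n \to -\partial_\nu u_0$ and the finer $O(\e_n^{1/2})$ information encoded in $q_0$ can be extracted simultaneously from the single bound. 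Everything else is the bookkeeping already carried out at first order.
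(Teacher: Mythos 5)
Your proposal is correct and follows essentially the same route as the paper: both hinge on the exact identity $\F_{\e_n}^{(2)}(w_n) = \frac{1}{2}\int_{\Omega}|\nabla r_n|^2\,d\bm{x} + \frac{1}{2\e_n}\int_{\partial \Omega}(r_n + \partial_{\nu}u_0)^2\,d\mathcal{H}^{N-1}$ (the paper's (\ref{Fe2vn})), obtained from $u_0 \in H^2(\Omega)$ and the divergence theorem together with $u_0 = g$ on $\partial\Omega$, after which both weak limits are read off from the nonnegativity of the two terms. The only cosmetic differences are that the paper invokes \Cref{convofmin} directly for $w_n \to u_0$ in $H^1(\Omega)$ (you route through \Cref{1compactnessJ}, which itself rests on \Cref{convofmin}) and delegates the extraction of the limits to the argument of \Cref{compactness} with $f = 0$ and $g = -\partial_{\nu}u_0$, whereas you spell those steps out.
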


\begin{proof}
By \Cref{convofmin}, we deduce that $w_n \to u_0$ in $H^1(\Omega)$. For every $n \in \NN$, let $r_n \in L^2(\Omega)$ be such that $w_n = u_0 + \e_n r_n$. Then $\F_{\e_n}^{(2)}(w_n)$ can be rewritten as
\begin{equation}
\label{Fe2vn}
\F_{\e_n}^{(2)}(w_n) = \frac{1}{2}\int_{\Omega}|\nabla r_n|^2\,d\bm{x} + \frac{1}{2\e_n}\int_{\partial \Omega}(r_n + \partial_{\nu}u_0)^2\,d\mathcal{H}^{N-1}.
\end{equation}
We then proceed as in the proof of \Cref{compactness} with $f = 0$, $g = - \partial_{\nu}u_0$ and $r_n$ in place of $u_n$.
\end{proof}

\begin{thm}[$2$nd order $\Gamma$-convergence]
\label{2gcJ}
Under the assumptions of \Cref{2compactnessJ}, let $u_1 \in H^1(\Omega)$ be the unique solution to the Dirichlet problem
\[
\left\{
\arraycolsep=1.4pt\def\arraystretch{1.6}
\begin{array}{rll}
\Delta u_1 = & 0 &  \text{ in } \Omega, \\
u_1 = & -\partial_{\nu}u_0 & \text{ on } \partial \Omega.
\end{array}
\right.
\]
Then the family $\{\F_{\e}^{(2)}\}_{\e}$ $\Gamma$-converges in $L^2(\Omega)$ to the functional 
\[ 
\F_2(v) \coloneqq 
\left\{
\arraycolsep=1.4pt\def\arraystretch{1.6}
\begin{array}{ll}
\displaystyle \frac{1}{2}\int_{\Omega}|\nabla u_1|^2\,d\bm{x} & \text{ if } v = u_0, \\
+\infty & \text{ otherwise}.
\end{array}
\right.
\]
In particular, if $u_{\e} \in H^1(\Omega)$ is the solution to $(\ref{Re})$, then 
\begin{equation}
\label{Fe2JTay}
\F_{\e}(u_{\e}) = \F_0(u_0) + \e \F_1(u_0) +  \e^2 \F_2(u_0) + o\left(\e^2\right).
\end{equation}
\end{thm}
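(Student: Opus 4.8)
The plan is to establish the two $\Gamma$-convergence inequalities separately and then deduce the energy expansion from the abstract theory, exactly as in the first order case \Cref{1gcJ}. The starting point for both inequalities is the identity (\ref{Fe2vn}), which, writing $w_n = u_0 + \e_n r_n$, rewrites $\F_{\e_n}^{(2)}(w_n)$ as the sum of a bulk Dirichlet term $\frac{1}{2}\int_{\Omega}|\nabla r_n|^2\,d\bm{x}$ and a \emph{nonnegative} boundary term $\frac{1}{2\e_n}\int_{\partial \Omega}(r_n + \partial_{\nu}u_0)^2\,d\mathcal{H}^{N-1}$.

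For the liminf inequality, let $w_n \to v$ in $L^2(\Omega)$. If $v \neq u_0$ there is nothing to prove: any subsequence with finite liminf would have uniformly bounded energy and hence, by \Cref{2compactnessJ}, converge to $u_0$ in $L^2(\Omega)$, a contradiction, so the liminf is $+\infty = \F_2(v)$. Thus assume $v = u_0$, and after passing to a subsequence that realizes the liminf with bounded energy, apply \Cref{2compactnessJ} to obtain $r_{n_k} \rightharpoonup w_0$ in $H^1(\Omega)$ with $w_0 = -\partial_{\nu}u_0$ on $\partial \Omega$ in the sense of traces. Discarding the nonnegative boundary term in (\ref{Fe2vn}) and using the weak lower semicontinuity of the Dirichlet energy yields $\liminf_k \F_{\e_{n_k}}^{(2)}(w_{n_k}) \ge \frac{1}{2}\int_{\Omega}|\nabla w_0|^2\,d\bm{x}$. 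The key step is then to recognize that $u_1$, being the harmonic extension of $-\partial_{\nu}u_0$, minimizes the Dirichlet energy among all $H^1(\Omega)$ functions with trace $-\partial_{\nu}u_0$ on $\partial \Omega$: since $w_0 - u_1 \in H^1_0(\Omega)$ and $\int_{\Omega}\nabla u_1 \cdot \nabla(w_0 - u_1)\,d\bm{x} = 0$ by harmonicity, one has $\int_{\Omega}|\nabla w_0|^2\,d\bm{x} \ge \int_{\Omega}|\nabla u_1|^2\,d\bm{x}$, whence $\liminf_k \F_{\e_{n_k}}^{(2)}(w_{n_k}) \ge \frac{1}{2}\int_{\Omega}|\nabla u_1|^2\,d\bm{x} = \F_2(u_0)$.

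For the limsup inequality, when $v \neq u_0$ the constant sequence is a recovery sequence, so assume $v = u_0$ and take $w_n \coloneqq u_0 + \e_n u_1$. This is admissible because $u_0 \in H^2(\Omega)$ by elliptic regularity, so $\partial_{\nu}u_0 \in H^{1/2}(\partial \Omega)$ and $u_1 \in H^1(\Omega)$; moreover $w_n \to u_0$ in $H^1(\Omega)$, hence in $L^2(\Omega)$. With $r_n = u_1$ the boundary term in (\ref{Fe2vn}) vanishes identically, because $u_1 + \partial_{\nu}u_0 = 0$ on $\partial \Omega$, so that $\F_{\e_n}^{(2)}(w_n) = \frac{1}{2}\int_{\Omega}|\nabla u_1|^2\,d\bm{x} = \F_2(u_0)$ for every $n$. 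The recovery sequence is thus exact and the limsup inequality holds with equality.

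Finally, the energy expansion (\ref{Fe2JTay}) follows from the abstract theory of asymptotic development by $\Gamma$-convergence (Theorem 1.2 in \cite{MR1202527}), applied exactly as in the proof of \Cref{1gcJ}: since $\min \F_0 = \F_0(u_0)$, $\min \F_1 = \F_1(u_0)$, and $\min \F_2 = \F_2(u_0)$, the $\Gamma$-convergence $\F_{\e}^{(2)} \overset{\Gamma}{\to} \F_2$ together with the lower order statements \Cref{0gc} and \Cref{1gcJ} yields the claimed expansion of $\F_{\e}(u_{\e})$. I expect no serious obstacle; the only points requiring care are the identification of the liminf bound with $\F_2(u_0)$ through the Dirichlet principle and the verification that the natural recovery sequence $u_0 + \e_n u_1$ annihilates the boundary term exactly.
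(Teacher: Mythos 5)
Your proposal is correct and follows essentially the same route as the paper's proof: the identity (\ref{Fe2vn}), the compactness statement of \Cref{2compactnessJ} together with weak lower semicontinuity of the Dirichlet energy for the liminf inequality, the exact recovery sequence $w_n = u_0 + \e_n u_1$ for the limsup, and Theorem 1.2 of \cite{MR1202527} for the expansion (\ref{Fe2JTay}). Your explicit orthogonality argument, using $w_0 - u_1 \in H^1_0(\Omega)$ and $\int_{\Omega}\nabla u_1 \cdot \nabla(w_0 - u_1)\,d\bm{x} = 0$, is just a spelled-out version of the paper's appeal to the Dirichlet principle $\frac{1}{2}\int_{\Omega}|\nabla w_0|^2\,d\bm{x} \ge \inf\bigl\{\frac{1}{2}\int_{\Omega}|\nabla p|^2\,d\bm{x} : p = -\partial_{\nu}u_0 \text{ on } \partial\Omega\bigr\} = \frac{1}{2}\int_{\Omega}|\nabla u_1|^2\,d\bm{x}$.
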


\begin{proof}
Let $\e_n \to 0^+$ and $\{w_n\}_n$ be a sequence of functions in $L^2(\Omega)$ such that $w_n \to w$ in $L^2(\Omega)$. Reasoning as in the proof of \Cref{0gc}, we can assume without loss of generality that 
\[
\liminf_{n \to \infty} \F_{\e_n}^{(2)}(w_n) = \lim_{n \to \infty} \F_{\e_n}^{(2)}(w_n) < \infty.
\]
In particular, $\F_{\e_n}^{(2)}(w_n) < \infty$ for every $n$ sufficiently large. Let $\{w_{n_k}\}_k$ and $w_0$ be as in \Cref{2compactnessJ}. Then $w_n \to u_0$ in $H^1(\Omega)$ and from (\ref{Fe2vn}) we deduce that
\[
\begin{aligned}
\liminf_{k \to \infty}\F_{\e_{n_k}}^{(2)}(w_{n_k}) \ge &\ \liminf_{k \to \infty} \frac{1}{2}\int_{\Omega}|\nabla r_{n_k}|^2\,d\bm{x} \ge \frac{1}{2}\int_{\Omega}|\nabla w_0|^2\,d\bm{x} \\
\ge &\ \inf \left\{ \frac{1}{2}\int_{\Omega}|\nabla p|^2\,d\bm{x} : p \in H^1(\Omega),\ p = - \partial_{\nu}u_0 \text{ on } \partial \Omega \right\} = \frac{1}{2}\int_{\Omega}|\nabla u_1|^2\,d\bm{x} = \F_2(u_0).
\end{aligned}
\]
We remark that the function $u_1$ exists (and is unique) by an application of \Cref{convofmin}.

On the other hand, for every $w \in L^2(\Omega) \setminus \{u_0\}$ the constant sequence $w_n = w$ is a recovery sequence. As one can  check from (\ref{Fe2vn}), $w_n \coloneqq u_0 + \e_n u_1$ is a recovery sequence for $u_0$. This concludes the proof of the $\Gamma$-convergence. The energy expansion (\ref{Fe2JTay}) follows from Theorem 1.2 in \cite{MR1202527}.
\end{proof}

\begin{cor}
\label{estimatesr1J}
Let $\Omega$ be as in $(H_1)$, $f \in L^2(\Omega)$, $g \in H^{3/2}(\partial \Omega)$, and let $u_{\e}$ and $u_0$ be solutions to $(\ref{Re})$ and $(\ref{R0})$, respectively. Then there exists a constant $c > 0$, independent of $\e$, such that 
\[
\label{estimatereps}
\begin{aligned}
\|u_{\e} - u_0\|_{H^1(\Omega)} \le &\ c\e\left(\|f\|_{L^2(\Omega)} + \|g\|_{H^{3/2}(\partial \Omega)}\right), \\
\left\|u_{\e} - u_0 + \e \partial_{\nu}u_0\right\|_{L^2(\partial \Omega)} \le &\ c\e^{3/2}\left(\|f\|_{L^2(\Omega)} + \|g\|_{H^{3/2}(\partial \Omega)}\right).
\end{aligned}
\]
\end{cor}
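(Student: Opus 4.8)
The plan is to turn the qualitative second-order expansion into quantitative rates by combining the minimality of $u_\e$ with the exact identity satisfied by the second-order recovery sequence. First I would write $u_\e = u_0 + \e r_\e$, so that the quantities to be estimated are exactly $\|u_\e - u_0\|_{H^1(\Omega)} = \e\|r_\e\|_{H^1(\Omega)}$ and $\|u_\e - u_0 + \e\partial_\nu u_0\|_{L^2(\partial\Omega)} = \e\|r_\e + \partial_\nu u_0\|_{L^2(\partial\Omega)}$. It therefore suffices to prove the two $\e$-independent bounds
\[
\|r_\e\|_{H^1(\Omega)} \le c\left(\|f\|_{L^2(\Omega)} + \|g\|_{H^{3/2}(\partial\Omega)}\right), \qquad \|r_\e + \partial_\nu u_0\|_{L^2(\partial\Omega)} \le c\e^{1/2}\left(\|f\|_{L^2(\Omega)} + \|g\|_{H^{3/2}(\partial\Omega)}\right).
\]

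The crux is an energy comparison at the level of $\F_\e^{(2)}$. Since $\F_\e^{(2)}$ is an increasing affine transformation of $\F_\e$, the solution $u_\e$ to $(\ref{Re})$ minimizes $\F_\e^{(2)}$, and in particular $\F_\e^{(2)}(u_\e) \le \F_\e^{(2)}(u_0 + \e u_1)$. Evaluating the right-hand side through $(\ref{Fe2vn})$ with $r = u_1$, the boundary term vanishes identically because $u_1 = -\partial_\nu u_0$ on $\partial\Omega$, leaving the $\e$-free upper bound $\F_\e^{(2)}(u_0 + \e u_1) = \frac12\int_\Omega|\nabla u_1|^2\,d\bm{x} = \F_2(u_0)$. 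Expanding the left-hand side via $(\ref{Fe2vn})$ with $r = r_\e$ then yields
\[
\frac12\int_\Omega|\nabla r_\e|^2\,d\bm{x} + \frac{1}{2\e}\int_{\partial\Omega}(r_\e + \partial_\nu u_0)^2\,d\mathcal{H}^{N-1} \le \frac12\int_\Omega|\nabla u_1|^2\,d\bm{x}.
\]
Both terms on the left are nonnegative, so I would read off directly that $\|\nabla r_\e\|_{L^2(\Omega;\RR^N)}^2 \le \int_\Omega|\nabla u_1|^2\,d\bm{x}$ and $\|r_\e + \partial_\nu u_0\|_{L^2(\partial\Omega)}^2 \le \e\int_\Omega|\nabla u_1|^2\,d\bm{x}$.

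It then remains to control $\int_\Omega|\nabla u_1|^2\,d\bm{x}$ and $\|\partial_\nu u_0\|_{L^2(\partial\Omega)}$ by $\|f\|_{L^2(\Omega)} + \|g\|_{H^{3/2}(\partial\Omega)}$. Here I would invoke elliptic regularity for $(\ref{R0})$: since $\partial\Omega$ is of class $C^{1,1}$, one has $u_0 \in H^2(\Omega)$ with $\|u_0\|_{H^2(\Omega)} \le c(\|f\|_{L^2(\Omega)} + \|g\|_{H^{3/2}(\partial\Omega)})$, whence $\|\partial_\nu u_0\|_{H^{1/2}(\partial\Omega)} \le c(\|f\|_{L^2(\Omega)} + \|g\|_{H^{3/2}(\partial\Omega)})$ by the trace theorem. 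As $u_1$ solves a homogeneous Dirichlet problem with data $-\partial_\nu u_0 \in H^{1/2}(\partial\Omega)$, the standard energy estimate gives $\|u_1\|_{H^1(\Omega)} \le c\|\partial_\nu u_0\|_{H^{1/2}(\partial\Omega)}$, so $\int_\Omega|\nabla u_1|^2\,d\bm{x} \le c(\|f\|_{L^2(\Omega)} + \|g\|_{H^{3/2}(\partial\Omega)})^2$. The boundary estimate follows immediately. For the $H^1$ estimate I would bound $\|r_\e\|_{H^1(\Omega)}$ through the equivalent norm of \Cref{equivnorm} applied with $\Gamma_D = \partial\Omega$: the gradient part is already controlled, and $\|r_\e\|_{L^2(\partial\Omega)} \le \|r_\e + \partial_\nu u_0\|_{L^2(\partial\Omega)} + \|\partial_\nu u_0\|_{L^2(\partial\Omega)}$, where the first summand is $\mathcal{O}(\e^{1/2})$ and the second is already bounded.

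The computations are routine once the comparison is in place; the only point demanding care is checking that every constant is genuinely independent of $\e$, which is automatic because the upper bound $\F_2(u_0)$ produced by the recovery sequence carries no $\e$ dependence. In this sense there is no serious obstacle: the essential mechanism is that the exact cancellation of the boundary contribution for the recovery sequence $u_0 + \e u_1$ converts the qualitative asymptotic development by $\Gamma$-convergence into the sharp rates $\e$ and $\e^{3/2}$.
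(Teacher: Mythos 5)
Your proof is correct and follows essentially the same route as the paper: both compare $u_\e$ against the recovery sequence $u_0 + \e u_1$ (you phrase the comparison at the level of $\F_\e^{(2)}$, the paper equivalently at the level of $\F_\e$), exploit the exact vanishing of the boundary term since $u_1 = -\partial_\nu u_0$ on $\partial\Omega$, and close with elliptic regularity for $u_0$, the trace theorem, and the energy bound for $u_1$. Your explicit use of the equivalent norm from \Cref{equivnorm} to recover the full $H^1$ bound merely spells out a step the paper leaves implicit.
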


\begin{proof}
If we let $w_{\e} \coloneqq u_0 + \e u_1$, for $u_1$ as in \Cref{2gcJ}, then
\[
\F_{\e}(w_{\e}) = \F_0(u_0) + \e \F_1(u_0) + \e^2 \F_2(u_0)
\]
and from the minimality of $u_{\e}$ we deduce
\[
\F_{\e}(u_{\e}) \le \F_0(u_0) + \e \F_1(u_0) + \e^2 \F_2(u_0).
\]
Writing $r_{\e} \coloneqq \frac{u_{\e} - u_0}{\e}$, expanding, and rearranging the terms in the previous inequality we arrive at 
\begin{equation}
\label{estimateforre}
\frac{1}{2}\int_{\Omega}|\nabla r_{\e}|^2\,d\bm{x} + \frac{1}{2\e}\int_{\partial \Omega}\left(r_{\e} + \e \partial_{\nu}u_0\right)^2\,d\mathcal{H}^{N-1} \le \frac{\e^2}{2}\int_{\Omega}|\nabla u_1|^2\,d\bm{x}.
\end{equation}
Since $\partial \Omega$ is of class $C^{1,1}$, $f \in L^2(\Omega)$, and $g \in H^{3/2}(\partial \Omega)$, by standard elliptic estimates (see Theorem 2.4.2.5 in \cite{grisvard85}) the solution $u_0 \in H^1(\Omega)$ to the Dirichlet problem (\ref{R0}) belongs to $H^2(\Omega)$ with
\[
\|u_0\|_{H^2(\Omega)} \le k_1\left(\|f\|_{L^2(\Omega)} + \|g\|_{H^{3/2}(\Omega)}\right).
\]
In turn, by standard trace theorems (see Theorem 18.40 in \cite{leoni}), we have that $\partial_{\nu}u_0 \in H^{1/2}(\partial \Omega)$, and so there is $z_0 \in H^1(\Omega)$ such that $z_0 = -\partial_{\nu}u_0$ on $\partial \Omega$ in the sense of traces and
\[
\|z_0\|_{H^1(\Omega)} \le k_2 \|\partial_{\nu}u_0\|_{H^{1/2}(\partial \Omega)} \le k_3\|u_0\|_{H^2(\Omega)} \le c\left(\|f\|_{L^2(\Omega)} + \|g\|_{H^{3/2}(\Omega)}\right).
\]
Since $u_1 \in H^1(\Omega)$ is a minimizer of
\[
v \mapsto \int_{\Omega}|\nabla v|^2\,d\bm{x}
\]
over all functions $v$ with $v = - \partial_{\nu}u_0$ on $\partial \Omega$, we have that 
\[
\|\nabla u_1\|_{L^2(\Omega;\RR^N)} \le \|\nabla z_0\|_{L^2(\Omega;\RR^N)} \le c\left(\|f\|_{L^2(\Omega)} + \|g\|_{H^{3/2}(\Omega)}\right).
\]
The previous estimate, together with (\ref{estimateforre}), gives the desired result.
\end{proof}

\subsection{The non-mixed problem: Gamma-convergences of all orders}
In this section we prove asymptotic expansions by $\Gamma$-convergence of any order for $\F_{\e}$ and derive asymptotic expansions for $u_{\e}$, i.e. the solution to $(\ref{Re})$.
\begin{thm}
\label{kgcJ}
Given $k \in \NN$, let $j \in \NN$ be such that $k = 2j - 1$ or $k = 2j$, $\Omega$ be as in $($\ref{Hj}$)$, $f \in L^2(\Omega)$, $g \in H^{1/2 + j}(\partial \Omega)$, and for every $m \in \{1, \dots, j\}$ let $u_m \in H^1(\Omega)$ be the solution to the Dirichlet problem
\begin{equation}
\label{Rm}
\left\{
\arraycolsep=1.4pt\def\arraystretch{1.6}
\begin{array}{rll}
\Delta u_m = & 0 &  \text{ in } \Omega, \\
u_m = & -\partial_{\nu}u_{m-1} & \text{ on } \partial \Omega,
\end{array}
\right.
\end{equation}
where $u_0$ is the solution to $(\ref{R0})$. Let $\F_{\e}^{(k+1)}$ be defined recursively by
\[
\F_{\e}^{(k+1)} \coloneqq \frac{\F_{\e}^{(k)} - \F_{k}(u_0)}{\e},
\]
where $\F_{\e}^{(1)}$ is given as in $(\ref{Fe1J})$ and the functionals $\F_i$, for $i \in \{1,\dots, k+1\}$, are given by
\[
\F_{2m + 1}(v) \coloneqq
\left\{
\arraycolsep=1.4pt\def\arraystretch{1.9}
\begin{array}{ll}
\displaystyle -\frac{1}{2}\int_{\partial \Omega}\left(\partial_{\nu}u_m\right)^2\,d\mathcal{H}^{N-1} & \text{ if } v = u_0, \\
+\infty & \text{ otherwise},
\end{array} 
\right. 
\]
and
\[
\F_{2m }(v) \coloneqq
\left\{
\arraycolsep=1.4pt\def\arraystretch{1.9}
\begin{array}{ll}
\displaystyle +\frac{1}{2}\int_{\Omega}|\nabla u_m|^2\,d\bm{x} & \text{ if } v = u_0, \\
+\infty & \text{ otherwise}.
\end{array} 
\right. 
\]
Then the family $\{\F_{\e}^{(i)}\}_{\e}$ $\Gamma$-converges in $L^2(\Omega)$ to the functional $\F_i$ for every $i \in \{2,\dots, k+1\}$. In particular, if $u_{\e} \in H^1(\Omega)$ is the solution to $(\ref{Re})$, then 
\[
\F_{\e}(u_{\e}) = \sum_{i = 0}^{k+1}\e^i\F_i(u_0) + o\left(\e^{k+1}\right).
\]
\end{thm}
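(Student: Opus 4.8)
The plan is to argue by induction on $m$, peeling off two orders of the expansion at a time and exploiting the self-similar structure of the family $\{\F_\e\}_\e$. For data $(\tilde f,\tilde g)$ write $\mathcal{G}_\e[\tilde f,\tilde g]$ for the functional $(\ref{Fe})$ with $\Gamma_D=\partial\Omega$, and let $\tilde u$ denote the minimizer of its zeroth order $\Gamma$-limit (\Cref{0gc}). The starting observation, already implicit in the proof of \Cref{2gcJ}, is that completing the square and applying the divergence theorem (legitimate once $\tilde u\in H^2(\Omega)$, so that $\Delta\tilde u=\tilde f$ a.e.\ and $\partial_\nu\tilde u\in L^2(\partial\Omega)$) gives the exact identity
\[
\big(\mathcal{G}_\e[\tilde f,\tilde g]\big)^{(2)}(v)=\mathcal{G}_\e[0,-\partial_\nu\tilde u]\left(\frac{v-\tilde u}{\e}\right).
\]
In words, dividing out two powers of $\e$ turns a problem of type $(\ref{Re})$ into another problem of the same type, with the forcing erased and the Robin datum replaced by minus the normal derivative of the previous solution.

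Set $\mathcal{G}_\e^{[0]}\coloneqq\F_\e$ and $\mathcal{G}_\e^{[m]}\coloneqq\mathcal{G}_\e[0,-\partial_\nu u_{m-1}]$ for $m\ge1$, so that $u_m$, the solution of $(\ref{Rm})$, is precisely the minimizer of the zeroth order $\Gamma$-limit of $\mathcal{G}_\e^{[m]}$. Iterating the identity above, I would prove the reduction formulas
\[
\F_\e^{(2m)}(v)=\mathcal{G}_\e^{[m]}\left(\frac{v-\sum_{l=0}^{m-1}\e^l u_l}{\e^m}\right),\qquad \F_\e^{(2m+1)}(v)=\big(\mathcal{G}_\e^{[m]}\big)^{(1)}\left(\frac{v-\sum_{l=0}^{m-1}\e^l u_l}{\e^m}\right),
\]
the inductive step being exactly one more application of the displayed identity to $\mathcal{G}_\e^{[m]}$. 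The two $\Gamma$-limits are then read off from the base cases applied to $\mathcal{G}_\e^{[m]}$ with $u_m$ in the role of $u_0$: \Cref{0gc} identifies the minimum of the zeroth order limit of $\mathcal{G}_\e^{[m]}$ as $\tfrac12\int_\Omega|\nabla u_m|^2=\F_{2m}(u_0)$, while \Cref{1gcJ} yields the first order limit $-\tfrac12\int_{\partial\Omega}(\partial_\nu u_m)^2\,d\mathcal{H}^{N-1}=\F_{2m+1}(u_0)$. That these minima coincide with the constants $\F_i(u_0)$ subtracted in the recursive definition of $\F_\e^{(k+1)}$ is exactly what keeps the recursion consistent.

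It remains to transport these limits through the $\e$-dependent rescalings $v\mapsto(v-\sum_{l<m}\e^l u_l)/\e^m$, and this turns out to be harmless. The compactness statements \Cref{compactness} and \Cref{1compactnessJ}, applied to $\mathcal{G}_\e^{[m]}$, produce the weak $H^1$-convergence of the rescaled remainders directly from a bound on $\F_{\e_n}^{(2m)}(v_n)$ or $\F_{\e_n}^{(2m+1)}(v_n)$; the same bound forces $v_n\to u_0$ in $L^2(\Omega)$, so the liminf inequality against $\F_i$ (which is $+\infty$ off $u_0$) holds automatically when $v\ne u_0$, and follows from the base-case liminf when $v=u_0$. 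For the limsup inequality the recovery sequences are explicit, namely the partial sums $v_n=\sum_{l\le m}\e_n^l u_l$ at even orders and $v_n=\sum_{l\le m}\e_n^l u_l+\e_n^{m+1}w$, with $w=-\partial_\nu u_m$ on $\partial\Omega$, at odd orders, mirroring \Cref{2gcJ} and \Cref{1gcJ}.

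The main obstacle is the regularity bookkeeping. Each use of the reduction requires $-\partial_\nu u_{m-1}$ to be an admissible datum for $\mathcal{G}_\e^{[m]}$, and each application of \Cref{1gcJ}/\Cref{2gcJ} to $\mathcal{G}_\e^{[m]}$ requires the relevant solution to lie in $H^2(\Omega)$, so that its normal trace belongs to $L^2(\partial\Omega)$ and the divergence theorem is available. By elliptic regularity (Theorem 2.4.2.5 in \cite{grisvard85}), the assumptions that $\partial\Omega$ be of class $C^{j,1}$ and that $g\in H^{1/2+j}(\partial\Omega)$ (with $f$ on the matching Sobolev scale) are calibrated so that every $u_m$ with $m\le j$ inherits enough regularity for $\partial_\nu u_m$ to appear in $\F_{2m}$ and $\F_{2m+1}$; verifying that this reservoir of regularity suffices for exactly $k$ steps, and controlling the loss incurred at each step of the cascade $(\ref{Rm})$, is the chief technical content of the argument. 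Finally, once $\F_\e^{(i)}\overset{\Gamma}{\to}\F_i$ is established for all $i\le k+1$, the energy expansion
\[
\F_\e(u_\e)=\sum_{i=0}^{k+1}\e^i\F_i(u_0)+o\left(\e^{k+1}\right)
\]
for the minimizer $u_\e$ of $\F_\e$ follows from Theorem 1.2 in \cite{MR1202527}, exactly as in $(\ref{Fe1JTay})$ and $(\ref{Fe2JTay})$.
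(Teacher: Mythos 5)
Your proposal is correct, and it is in substance the induction that the paper itself intends: the paper's own proof of \Cref{kgcJ} is only two lines long, observing that $k=1$ reduces to \Cref{2gcJ} and asserting that $k \ge 2$ ``follows by induction from arguments similar to the ones of \Cref{1gcJ} and \Cref{2gcJ} (depending on the parity of $k$),'' with all details omitted. What you add is a genuinely better organization of that induction. Your renormalization identity
\[
\big(\mathcal{G}_{\e}[\tilde f,\tilde g]\big)^{(2)}(v)=\mathcal{G}_{\e}\big[0,-\partial_{\nu}\tilde u\big]\Big(\frac{v-\tilde u}{\e}\Big)
\]
is exactly the content of the paper's computations (\ref{Fe2vn}) and (\ref{estimateforre}) (complete the square, then apply the divergence theorem using $\tilde u\in H^2(\Omega)$ and $\tilde u=\tilde g$ on $\partial\Omega$), but promoting it to an exact self-similarity of the family and iterating it to get $\F_{\e}^{(2m)}(v)=\mathcal{G}_{\e}^{[m]}\big((v-\sum_{l<m}\e^{l}u_{l})/\e^{m}\big)$ means each inductive step is a \emph{verbatim} application of \Cref{0gc}, \Cref{1gcJ}, \Cref{compactness}, and \Cref{1compactnessJ} to the shifted data $(0,-\partial_{\nu}u_{m-1})$, rather than a re-derivation of the estimates at each order; the consistency of the subtracted constants $\F_{i}(u_0)$ with the successive minima, which the paper's recursion takes for granted, becomes transparent. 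Your transport of the $\Gamma$-limits through the $\e$-dependent rescaling is also handled correctly: an energy bound forces the rescaled remainders to be bounded in $H^1(\Omega)$, hence $v_n\to u_0$ in $L^2(\Omega)$, so the liminf is automatic off $u_0$, and your recovery sequences are the natural generalizations of those in \Cref{1gcJ} and \Cref{2gcJ}. One point where your care actually exceeds the theorem's literal statement: your parenthetical that $f$ must sit ``on the matching Sobolev scale'' is not optional. The cascade requires, e.g., $\partial_{\nu}u_{m-1}\in H^{3/2}(\partial\Omega)$ to invoke \Cref{1gcJ} at level $m$, which pulls back through (\ref{Rm}) to $u_0\in H^{j+1}(\Omega)$; by elliptic regularity this needs $g\in H^{j+1/2}(\partial\Omega)$ and $\partial\Omega$ of class $C^{j,1}$, as hypothesized, but also $f\in H^{j-1}(\Omega)$, so the stated assumption $f\in L^2(\Omega)$ alone does not suffice for $j\ge 2$ --- a gap in the bookkeeping that the paper's omitted proof glosses over and your scheme makes visible.
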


\begin{proof}
Notice that for $k = 1$ we have that $j = 1$ and so the statement reduces to the one of \Cref{2gcJ}. The result for $ k \ge 2$ follows by induction from arguments similar to the ones of \Cref{1gcJ} and \Cref{2gcJ} (depending on the parity of $k$). We omit the details. 
\end{proof}

\begin{cor}
\label{estimatesrkJ}
Under the assumptions of Theorem \ref{kgcJ}, and for an odd value of $k \in \NN$, let $u_{\e}$, $u_0$, $u_i$ be solutions to $(\ref{Re})$, $(\ref{R0})$, and $(\ref{Rm})$, respectively. Then there exists a constant $c > 0$, independent of $\e$, such that for every $j \in\{1, \dots, (k+1)/2\}$
\[
\label{estimatereps}
\begin{aligned}
\left\|u_{\e} - \sum_{i = 0}^{j-1} \e^i u_i\right\|_{H^1(\Omega)} \le &\ C\e^{j}\left(\|f\|_{L^2(\Omega)} + \|g\|_{H^{1/2 + j}(\Omega)}\right), \\
\left\|u_{\e} - \sum_{i = 0}^{j-1} \e^i u_i + \e \partial_{\nu}u_j\right\|_{L^2(\partial \Omega)} \le &\ C\e^{1/2 + j}\left(\|f\|_{L^2(\Omega)} + \|g\|_{H^{1/2 + j}(\Omega)}\right).
\end{aligned}
\]
\end{cor}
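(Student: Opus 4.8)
The plan is to follow the scheme of \Cref{estimatesr1J}, comparing the minimizer $u_\e$ with the truncated expansion $S_m := \sum_{i=0}^m \e^i u_i$ at a general level. Everything reduces to a single first-variation identity. Since $\Delta S_m = f$ in $\Omega$ (because $\Delta u_0 = f$ and $\Delta u_i = 0$ for $i\ge 1$), while on $\partial\Omega$ one has $u_0 = g$ and $u_i = -\partial_\nu u_{i-1}$, the divergence theorem gives
\[
\int_\Omega\left(\nabla S_m\cdot\nabla\phi + f\phi\right)d\bm{x} + \frac{1}{\e}\int_{\partial\Omega}(S_m - g)\phi\,d\mathcal{H}^{N-1} = \e^m\int_{\partial\Omega}\partial_\nu u_m\,\phi\,d\mathcal{H}^{N-1}
\]
for every $\phi\in H^1(\Omega)$, the point being that $\partial_\nu S_m + \e^{-1}(S_m - g) = \e^m\partial_\nu u_m$ on $\partial\Omega$ after telescoping the boundary relations.

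First I would use this to compute the energy of the competitor exactly; the left-hand side above is the first variation $\langle\F_\e'(S_m),\phi\rangle$ of $\F_\e$ at $S_m$. Expanding the quadratic functional as $\F_\e(S_m) = \F_\e(S_{m-1}) + \langle\F_\e'(S_{m-1}),\e^m u_m\rangle + \tfrac{\e^{2m}}{2}\|\nabla u_m\|_{L^2}^2 + \tfrac{\e^{2m-1}}{2}\|u_m\|_{L^2(\partial\Omega)}^2$, the identity at level $m-1$ turns the linear term into $\e^{2m-1}\int_{\partial\Omega}\partial_\nu u_{m-1}u_m\,d\mathcal{H}^{N-1}$, and the relation $u_m = -\partial_\nu u_{m-1}$ on $\partial\Omega$ collapses the boundary contributions to $\e^{2m-1}\F_{2m-1}(u_0) + \e^{2m}\F_{2m}(u_0)$. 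Since $\F_\e(S_0) = \F_0(u_0)$, an induction on $m$ yields the exact energy expansion $\F_\e(S_j) = \sum_{i=0}^{2j}\e^i\F_i(u_0)$ that underlies \Cref{kgcJ}.

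Next I would expand $\F_\e(u_\e)$ around $S_{j-1}$. Writing $r_\e := u_\e - S_{j-1}$ and applying the same identity with $m = j-1$ and $\phi = r_\e$, completing the square on the boundary, and using $-\tfrac{\e^{2j-1}}{2}\int_{\partial\Omega}(\partial_\nu u_{j-1})^2 = \e^{2j-1}\F_{2j-1}(u_0)$ together with the expansion of $\F_\e(S_{j-1})$ from the previous step, I obtain $\F_\e(u_\e) = \sum_{i=0}^{2j-1}\e^i\F_i(u_0) + \tfrac12\|\nabla r_\e\|_{L^2}^2 + \tfrac{1}{2\e}\|r_\e + \e^j\partial_\nu u_{j-1}\|_{L^2(\partial\Omega)}^2$. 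Since the competitor gives $\F_\e(u_\e)\le\F_\e(S_j) = \sum_{i=0}^{2j}\e^i\F_i(u_0)$, the terms up to order $2j-1$ cancel and I am left with the analogue of (\ref{estimateforre}),
\[
\frac12\int_\Omega|\nabla r_\e|^2\,d\bm{x} + \frac{1}{2\e}\int_{\partial\Omega}\left(r_\e + \e^j\partial_\nu u_{j-1}\right)^2\,d\mathcal{H}^{N-1} \le \e^{2j}\F_{2j}(u_0) = \frac{\e^{2j}}{2}\int_\Omega|\nabla u_j|^2\,d\bm{x}.
\]
This immediately gives $\|\nabla r_\e\|_{L^2}\le\e^j\|\nabla u_j\|_{L^2}$ and $\|r_\e + \e^j\partial_\nu u_{j-1}\|_{L^2(\partial\Omega)}\le\e^{j+1/2}\|\nabla u_j\|_{L^2}$; the triangle inequality then bounds $\|r_\e\|_{L^2(\partial\Omega)}$ by $O(\e^j)$, and \Cref{equivnorm} (with $\Gamma_D = \partial\Omega$) upgrades the two pieces to the full $H^1$ estimate.

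Finally I would convert these into the stated bounds by propagating elliptic regularity through the hierarchy: with $\partial\Omega$ of class $C^{(k+1)/2,1}$ and $g\in H^{1/2+j}(\partial\Omega)$, Theorem 2.4.2.5 in \cite{grisvard85} gives $u_0\in H^{j+1}(\Omega)$ and, inductively, $u_m\in H^{j+1-m}(\Omega)$ with $\partial_\nu u_{m-1}\in H^{1/2}(\partial\Omega)\hookrightarrow L^2(\partial\Omega)$ for $m\le j$, so that $\|\nabla u_j\|_{L^2}$ and $\|\partial_\nu u_{j-1}\|_{L^2(\partial\Omega)}$ are controlled by $C(\|f\|_{L^2(\Omega)} + \|g\|_{H^{1/2+j}(\partial\Omega)})$. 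The main obstacle is exactly this bootstrapping, which must be carried far enough to justify every application of the divergence theorem (one needs $u_{m-1}\in H^2$, hence an $L^2$ normal trace) while keeping the accumulated constants independent of $\e$. I note that the boundary correction $\e^j\partial_\nu u_{j-1}$ that the argument naturally produces reduces, for $j = 1$, to the term $\e\partial_\nu u_0$ appearing in \Cref{estimatesr1J}.
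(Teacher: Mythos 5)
Your proposal is correct and follows essentially the same route as the paper, whose proof of \Cref{estimatesrkJ} simply declares it analogous to \Cref{estimatesr1J}: your telescoped boundary identity $\partial_\nu S_m + \e^{-1}(S_m - g) = \e^m\partial_\nu u_m$, the exact competitor expansion $\F_\e(S_j) = \sum_{i=0}^{2j}\e^i\F_i(u_0)$, and the completed square obtained from minimality of $u_\e$ are precisely that analogy carried out, with the same trace-theorem and elliptic-regularity bootstrap used to bound $\|\nabla u_j\|_{L^2}$ and $\|\partial_\nu u_{j-1}\|_{L^2(\partial\Omega)}$ by the data. You are also right that the boundary corrector the argument produces is $\e^j\partial_\nu u_{j-1}$ (reducing to $\e\,\partial_\nu u_0$ for $j=1$, in agreement with \Cref{estimatesr1J} and \Cref{2compactnessJ}), which is evidently the intended form of the term written as ``$\e\,\partial_{\nu}u_j$'' in the statement.
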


\begin{proof}
The proof is analogous to the one of \Cref{estimatesr1J} and therefore we omit the details.
\end{proof}
\section{The case of mixed boundary conditions}
In this section we prove our main results regarding the higher order $\Gamma$-limits for the functional $\F_{\e}$ defined as in (\ref{Fe}). 

\subsection{Preliminary results}
Throughout the section $\Omega$ is assumed to be as in the statement of \Cref{H2dec}. We recall that we use the following notations: given a function $v = v(\bm{x})$ where $\bm{x} = (x,y)$, we denote by $\bar{v}$ the function 
\begin{equation}
\label{origin}
\bar{v}(r,\theta) \coloneqq v(r \cos \theta, r \sin \theta),
\end{equation}
and with a slight abuse of notation we write $v = \bar{v}(r,\theta)$. Moreover, we denote by $\bar{v}^{(i)}$ the function
\begin{equation}
\label{polarxi}
\bar{v}^{(i)}(r_i,\theta_i) \coloneqq v(\bm{x}_i + r_i(\cos \theta_i, \sin \theta_i)),
\end{equation}
where the polar coordinates $(r_i,\theta_i)$ are as in \Cref{H2dec}. 
Furthermore, recall that $\bar{\varphi} \in C^{\infty}([0,\infty))$ is such that $\bar{\varphi} \equiv 1$ in $[0,\rho/2]$ and $\bar{\varphi} \equiv 0$ outside $[0,\rho]$. 

\begin{prop}
\label{IBP}
Let $N = 2$, $\Omega$ be as in \Cref{H2dec}, $f \in L^2(\Omega)$, $g \in H^{3/2}(\partial \Omega)$, and let $u_0 \in H^1(\Omega)$ be the solution to $(\ref{P0})$. Then  
\[
\int_{\Omega}\left(\nabla u_0 \cdot \nabla \psi + f\psi \right) \,d\bm{x} = \int_{\Gamma_D}\partial_{\nu}u^0_{\reg}\psi \,d\mathcal{H}^1 - \sum_{i = 1}^2 \frac{c_i}{2}\int_0^{\rho}\bar{\varphi}(r_i)r_i^{-1/2}\bar{\psi}^{(i)}(r_i,0)\,dr_i
\]
for every $\psi \in H^1(\Omega)$, where $u^0_{\reg},c_i$ and $\bar{\varphi}$ are given as in \Cref{H2dec}.
\end{prop}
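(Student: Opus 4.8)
The plan is to prove the integration-by-parts formula by substituting the decomposition $u_0 = u_{\reg}^0 + \sum_i c_i S_i$ from \Cref{H2dec} into the left-hand side and handling the regular and singular parts separately. First I would write
\[
\int_{\Omega}\left(\nabla u_0 \cdot \nabla \psi + f\psi\right)\,d\bm{x} = \int_{\Omega}\left(\nabla u_{\reg}^0 \cdot \nabla \psi + f\psi\right)\,d\bm{x} + \sum_{i=1}^2 c_i \int_{\Omega}\nabla S_i \cdot \nabla \psi\,d\bm{x}.
\]
Since $u_{\reg}^0 \in H^2(\Omega)$ and $\Delta u_0 = f$ in $\Omega$ (so $\Delta u_{\reg}^0 = f - \sum_i c_i \Delta S_i = f$ away from the singular support, using that each $\bar S_i$ is harmonic where $\bar\varphi \equiv 1$), the first term can be treated by the divergence theorem, producing a boundary integral $\int_{\partial\Omega}\partial_\nu u_{\reg}^0\,\psi\,d\mathcal{H}^1$. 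Using $\partial_\nu u_0 = 0$ on $\Gamma_N$ together with the boundary behavior of the $S_i$, this should collapse to the $\int_{\Gamma_D}\partial_\nu u_{\reg}^0\,\psi\,d\mathcal{H}^1$ term in the statement.

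The crux is the singular contribution $\int_{\Omega}\nabla S_i \cdot \nabla \psi\,d\bm{x}$. Each $S_i$ is supported in $B_\rho(\bm{x}_i)$ and is not $H^2$ near $\bm{x}_i$, so I cannot apply the divergence theorem directly. Instead I would excise a small disk $B_\delta(\bm{x}_i)$, integrate by parts on $\Omega \setminus B_\delta(\bm{x}_i)$ where $S_i$ is smooth, and let $\delta \to 0^+$. On this region $\Delta S_i = 0$ (in the zone where $\bar\varphi \equiv 1$; the cutoff region contributes a smooth piece that I expect to reorganize into the stated formula), so the only surviving terms are boundary integrals over $\partial\Omega$ and over the arc $\partial B_\delta(\bm{x}_i) \cap \Omega$. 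Working in polar coordinates with $\bar S_i = \bar\varphi(r_i)r_i^{1/2}\sin(\theta_i/2)$, one computes $\partial_\nu S_i = \partial_{r_i}S_i$ on $\partial B_\delta$; the singular $r_i^{-1/2}$ growth of the radial derivative is exactly balanced by the $r_i\,d\theta_i$ arc-length measure, so the inner-boundary integral remains bounded as $\delta \to 0$ and in fact vanishes in the limit by the oddness/vanishing of $\sin(\theta_i/2)$-type factors at the relevant endpoints.

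The remaining boundary piece over $\partial\Omega \cap B_\rho(\bm{x}_i)$ is where the $-\tfrac{c_i}{2}\int_0^\rho \bar\varphi(r_i)r_i^{-1/2}\bar\psi^{(i)}(r_i,0)\,dr_i$ term is generated. On the flat segment $\Gamma_D \cap B_\rho(\bm{x}_i) = \{\theta_i = 0\}$ the outward normal derivative of $S_i$ picks up the factor $\tfrac{1}{2}\bar\varphi(r_i)r_i^{1/2}\cdot r_i^{-1}\cos(0) = \tfrac{1}{2}\bar\varphi(r_i)r_i^{-1/2}$ from differentiating $\sin(\theta_i/2)$ in $\theta_i$, matching the claimed integrand (the function $\bar\psi_i$ in \eqref{psi_ie} is precisely this), while on $\Gamma_N \cap B_\rho(\bm{x}_i) = \{\theta_i = \pi\}$ the Neumann-type derivative of $S_i$ vanishes since $\cos(\pi/2) = 0$. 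The main obstacle will be the careful bookkeeping of the cutoff $\bar\varphi$: where $\bar\varphi$ is not constant, $\Delta S_i \neq 0$, so one must verify that the corresponding $H^2$-regular remainder has already been absorbed into $u_{\reg}^0$ (consistent with \Cref{H2dec}), and that no spurious boundary terms survive on the arcs $\{r_i = \rho\}$ where $\bar\varphi$ transitions to zero. I expect this to work out because $S_i \equiv 0$ outside $B_\rho(\bm{x}_i)$, so the far boundary of the support contributes nothing, and the only genuine boundary contribution is along $\Gamma_D$ as stated.
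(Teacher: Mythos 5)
Your overall strategy coincides with the paper's up to the treatment of the singular part: both split $u_0 = u^0_{\reg} + \sum_i c_i S_i$, handle $u^0_{\reg}$ with the divergence theorem (using that $u^0_{\reg}\in H^2(\Omega)$ inherits the homogeneous Neumann condition on $\Gamma_N$), and generate the term $-\frac{c_i}{2}\int_0^{\rho}\bar{\varphi}(r_i)r_i^{-1/2}\bar{\psi}^{(i)}(r_i,0)\,dr_i$ from the angular derivative of $\bar{S}_i$ at $\theta_i = 0$, the contribution at $\theta_i = \pi$ vanishing since $\cos(\pi/2) = 0$. Where you diverge is the mechanism for integrating $\nabla S_i \cdot \nabla \psi$ by parts: you excise $B_{\delta}(\bm{x}_i)$ and let $\delta \to 0^+$, whereas the paper works directly on the polar rectangle $(0,\rho)\times(0,\pi)$, checks that $\bar{\Phi} = r_i\partial_{r_i}\bar{S}_i\,\bar{\psi}^{(i)}$ and $\bar{\Psi} = r_i^{-1}\partial_{\theta_i}\bar{S}_i\,\bar{\psi}^{(i)}$ belong to $W^{1,1}((0,\rho)\times(0,\pi))$, and applies the fundamental theorem of calculus on almost every line. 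The paper's route buys exactly what yours lacks: it never evaluates $\psi$ on shrinking arcs, because the polar Jacobian tames the singularity at $r_i = 0$ line by line.

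That is where your proposal has a genuine flaw. You assert that the inner arc integral vanishes ``by the oddness/vanishing of $\sin(\theta_i/2)$-type factors at the relevant endpoints.'' This is false: $\sin(\theta_i/2) > 0$ on $(0,\pi)$ and $\int_0^{\pi}\sin(\theta/2)\,d\theta = 2$, so no cancellation comes from the angular factor. The arc term equals (up to the cutoff) $\frac{\delta^{1/2}}{2}\int_0^{\pi}\sin(\theta_i/2)\,\bar{\psi}^{(i)}(\delta,\theta_i)\,d\theta_i$, and the true reason it vanishes is the $\delta^{1/2}$ scaling; but for $\psi$ merely in $H^1(\Omega)$ the trace of $\psi$ on $\partial B_{\delta}$ is defined only for a.e.\ $\delta$ and is not uniformly controlled, so you must either select good radii by an averaging/Fubini argument (e.g., using $H^1 \hookrightarrow L^p$ for all $p < \infty$ one finds a sequence $\delta_j \to 0$ along which $\int_0^{\pi}\bar{\psi}^{(i)}(\delta_j,\theta)^2\,d\theta \lesssim \delta_j^{2/q - 2}$ with $q < 2$, making the arc term $O\left(\delta_j^{1/q - 1/2}\right) \to 0$), or first prove the identity for $\psi \in C^{\infty}(\overline{\Omega})$ and conclude by density, verifying that the singular pairing $\psi \mapsto \int_0^{\rho}\bar{\varphi}(r_i)r_i^{-1/2}\bar{\psi}^{(i)}(r_i,0)\,dr_i$ is continuous on $H^1(\Omega)$ (this follows from \Cref{lemmacpt}, or simply because $r^{-1/2} \in L^p(0,\rho)$ for $p < 2$ while the trace lies in every $L^q$). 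A smaller bookkeeping point: the cutoff annulus $\{\rho/2 \le r_i \le \rho\}$, where $\Delta S_i \neq 0$, is not ``absorbed into $u^0_{\reg}$''; rather, the volume term $-\int \psi\,\Delta S_i$ produced by your integration by parts cancels exactly against the corresponding piece of $\int_{\Omega} f\psi\,d\bm{x}$ through the identity $f = \Delta u^0_{\reg} + \sum_i c_i \Delta S_i$, which is precisely how the paper closes the argument.
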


\begin{proof}
By \Cref{H2dec}, given $\psi \in H^1(\Omega)$, we get
\begin{equation}
\label{gradgrad}
\int_{\Omega}\nabla u_0 \cdot \nabla \psi \,d\bm{x} = \int_{\Omega}\nabla u^0_{\reg} \cdot \nabla \psi \,d\bm{x} + \sum_{i = 1}^2c_i\int_0^{\pi}\int_0^{\rho}\left(\partial_{r_i}\bar{S}_i \partial_{r_i} \bar{\psi}^{(i)} + r_i^{-2}\partial_{\theta_i}\bar{S}_i \partial_{\theta_i} \bar{\psi}^{(i)} \right)r_i\,dr_id\theta_i.
\end{equation}
Since the function $u^0_{\reg}$ belongs to $H^2(\Omega)$ and satisfies a homogenous Neumann boundary condition on $\Gamma_N$, the divergence theorem yields
\begin{equation}
\int_{\Omega}\nabla u^0_{\reg} \cdot \nabla \psi \,d\bm{x} = \int_{\Omega}-\Delta u^0_{\reg}\psi\,d\bm{x} + \int_{\Gamma_D}\partial_{\nu}u^0_{\reg}\psi\,d\mathcal{H}^1. \label{uregneumann}
\end{equation}
To rewrite the second term on the right-hand side of (\ref{gradgrad}), we consider the auxiliary function 
\[
\bar{\Phi}(r_i, \theta_i) \coloneqq r_i\partial_{r_i}\bar{S}_i(r_i,\theta_i)\bar{\psi}^{(i)}(r_i,\theta_i);
\] 
indeed, a simple computation shows that $\bar{\Phi} \in W^{1,1}((0,\rho) \times (0,\pi))$ and thus $\bar{\Phi}(\cdot, \theta_i)$ is absolutely continuous for $\mathcal{L}^1$-a.e.\@ $\theta_i \in (0,\pi)$. For any such $\theta_i$, by the fundamental theorem of calculus, we have that
\begin{align}
0 = \bar{\Phi}(\rho,\theta_i) - \bar{\Phi}(0,\theta_i) = \int_0^{\rho}\partial_{r_i}\bar{\Phi}(r_i,\theta_i)\,dr_i = \int_0^{\rho} \left(\partial_{r_i}\bar{S}_i \bar{\psi}^{(i)} + r_i \partial_{r_i}^2\bar{S}_i \bar{\psi}^{(i)} + r_i \partial_{r_i}\bar{S}_i \partial_{r_i} \bar{\psi}^{(i)}\right)\,dr_i. \label{ftcPHI}
\end{align}
Similarly, noticing that the function $\bar{\Psi}(r_i,\theta_i) \coloneqq r_i^{-1}\partial_{\theta_i}\bar{S}_i(r_i,\theta_i) \bar{\psi}^{(i)}(r_i,\theta_i)$ belongs to the space $W^{1,1}((0,\rho) \times (0,\pi))$, and reasoning as above we find that  
\begin{align}
-\frac{1}{2}\bar{\varphi}(r_i)r_i^{-1/2}\bar{\psi}^{(i)}(r_i,0) = \bar{\Psi}(r_i,\pi) - \bar{\Psi}(r_i,0) = &\ \int_0^{\pi}\partial_{\theta_i}\bar{\Psi}(r_i, \theta_i)\,d\theta_i \notag \\
= &\ \int_0^{\pi}r_i^{-1}\left(\partial_{\theta_i}^2\bar{S}_i \bar{\psi}^{(i)} + \partial_{\theta_i}\bar{S}_i \partial_{\theta_i} \bar{\psi}^{(i)} \right)\,d\theta_i \label{ftcPSI}
\end{align}
holds for $\mathcal{L}^1$-a.e. $r_i \in (0,\rho)$. Combining the identities (\ref{ftcPHI}) and (\ref{ftcPSI}), we get
\begin{align*}
\int_0^{\pi}\int_0^{\rho} & \left(\partial_{r_i}\bar{S}_i \partial_{r_i} \bar{\psi}^{(i)} + r_i^{-2}\partial_{\theta_i}\bar{S}_i \partial_{\theta_i} \bar{\psi}^{(i)} \right)r_i\,dr_id\theta_i \\
= &\ - \frac{1}{2}\int_0^{\rho}\bar{\varphi}(r_i)r_i^{-1/2} \bar{\psi}^{(i)}(r_i,0)\,dr_i  - \int_0^{\pi}\int_0^{\rho} \bar{\psi}^{(i)} \left(\partial_{r_i}^2\bar{S}_i + r_i^{-1}\partial_{r_i}\bar{S}_i + r_i^{-2}\partial_{\theta_i}^2\bar{S}_i \right)r_i\,dr_id\theta_i \notag \\
= &\ - \frac{1}{2} \int_0^{\rho}\bar{\varphi}(r_i)r_i^{-1/2} \bar{\psi}^{(i)}(r_i,0)\,dr_i - \int_0^{\pi}\int_0^{\rho} \bar{\psi}^{(i)} \Delta_{(r_i,\theta_i)} \bar{S}_i r_i \,dr_id\theta_i.
\end{align*}
Consequently, the desired formula follows from the previous equality, (\ref{gradgrad}), (\ref{uregneumann}), and upon noticing that 
\[
\int_{\Omega}f\psi \,d\bm{x} = \int_{\Omega}\Delta u^0_{\reg}\psi\,d\bm{x} + \sum_{i = 1}^2 c_i\int_0^{\pi}\int_0^{\rho}\bar{\psi}^{(i)} \Delta_{(r_i,\theta_i)} \bar{S}_i r_i \,dr_id\theta_i.
\] 
This concludes the proof.
\end{proof}

In the following theorem we present an estimate that will prove instrumental for the proofs of our compactness results, namely \Cref{1gc-cpt} and \Cref{2gc-cpt}.

\begin{thm}
\label{lemmacpt}
There exists a constant $\kappa$ such that for any $R > 0$ and $h \in H^1(B_R^+(\bm{0}))$,
\[
\int_0^R x^{-1/2}|h(x,0)|\,dx \le \kappa \left(R\int_{B_R^+(\bm{0})}|\nabla h(\bm{x})|^2\,d\bm{x}\right)^{1/2} + \kappa \left(\int_0^R h(x,0)^2\,dx\right)^{1/2},
\]
where $h(\cdot,0)$ indicates the trace of $h$ on the positive real axis.
\end{thm}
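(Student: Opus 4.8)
The plan is to work in polar coordinates $(\rho,\theta)$ centered at $\bm 0$, writing $\overline h(\rho,\theta)$ for $h$ expressed in these coordinates. On the flat part $\{\theta=0\}$ one has $\overline h(\rho,0)=h(\rho,0)$, so the quantity to estimate is $\int_0^R \rho^{-1/2}|\overline h(\rho,0)|\,d\rho$, and by the scaling $h\mapsto h(\,\cdot\,/R)$ all three terms in the claimed inequality are homogeneous of degree $1/2$ in $R$, so it is enough to treat $R=1$ (alternatively one carries $R$ through, which only affects the constant). The key device is to replace the trace $\overline h(\rho,0)$, whose tangential derivative is \emph{not} controlled by $\int_{B_R^+(\bm 0)}|\nabla h|^2$, by the angular average $m(\rho) \coloneqq \frac{1}{\pi}\int_0^\pi \overline h(\rho,\theta)\,d\theta$, whose radial derivative \emph{is} controlled. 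I would therefore split
\[
\int_0^R \rho^{-1/2}|\overline h(\rho,0)|\,d\rho \le \int_0^R \rho^{-1/2}|\overline h(\rho,0)-m(\rho)|\,d\rho + \int_0^R \rho^{-1/2}|m(\rho)|\,d\rho
\]
and estimate the two pieces separately.

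For the oscillation term, the fundamental theorem of calculus in $\theta$ gives $|\overline h(\rho,0)-m(\rho)|^2 \le \pi\int_0^\pi|\partial_\theta\overline h(\rho,\theta)|^2\,d\theta$. Writing $\rho^{-1/2}(\,\cdot\,)^{1/2}=\big(\rho^{-1}\int_0^\pi|\partial_\theta\overline h|^2\,d\theta\big)^{1/2}$ and applying Cauchy--Schwarz in $\rho$ produces the factor $\big(\int_0^R d\rho\big)^{1/2}=R^{1/2}$ together with $\int_0^R\int_0^\pi \rho^{-2}|\partial_\theta\overline h|^2\,\rho\,d\theta\,d\rho \le \int_{B_R^+(\bm 0)}|\nabla h|^2$, since $|\nabla h|^2=(\partial_\rho\overline h)^2+\rho^{-2}(\partial_\theta\overline h)^2$. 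This yields $\int_0^R \rho^{-1/2}|\overline h(\rho,0)-m(\rho)|\,d\rho \le \pi^{1/2}R^{1/2}\big(\int_{B_R^+(\bm 0)}|\nabla h|^2\big)^{1/2}$, of the required form, and the same computation bounds $\int_0^R \rho^{-1}|\overline h(\rho,0)-m(\rho)|^2\,d\rho$.

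For the average term I would use that $m'(\rho)=\frac{1}{\pi}\int_0^\pi\partial_\rho\overline h\,d\theta$ satisfies $\int_0^R \rho\,|m'(\rho)|^2\,d\rho \le \pi^{-1}\int_{B_R^+(\bm 0)}|\nabla h|^2$ by Jensen and the same polar identity, while $\int_0^R |m|^2\,d\rho \lesssim \int_0^R \overline h(\rho,0)^2\,d\rho + R\int_{B_R^+(\bm 0)}|\nabla h|^2$ from the oscillation bound. Fixing a reference interval $(R/2,R)$, representing $m(\rho)$ for $\rho<R/2$ via the fundamental theorem of calculus as an average of $m(y)$, $y\in(R/2,R)$, plus $\int_\rho^y m'(s)\,ds$, and estimating $\int_\rho^R|m'(s)|\,ds \le (\log(R/\rho))^{1/2}\big(\int_0^R s|m'|^2\,ds\big)^{1/2}$ by Cauchy--Schwarz, gives the pointwise bound $|m(\rho)| \lesssim R^{-1/2}\big(\int_0^R|m|^2\big)^{1/2} + (\log(R/\rho))^{1/2}\big(\int_0^R s|m'|^2\,ds\big)^{1/2}$. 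Multiplying by $\rho^{-1/2}$ and integrating, the decisive point is that both $\int_0^R \rho^{-1/2}\,d\rho$ and $\int_0^R \rho^{-1/2}(\log(R/\rho))^{1/2}\,d\rho=c_0R^{1/2}$ (via $\rho=Rt$) converge, so the slow logarithmic growth of $m$ near the origin is absorbed by the integrable weight $\rho^{-1/2}$; the region $(R/2,R)$ away from the origin is elementary.

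The main obstacle is exactly this last term. The weight $\rho^{-1/2}$ fails to be square integrable near $0$, so one cannot simply apply Cauchy--Schwarz to the trace integral against $\rho^{-1/2}$ and invoke $\|\overline h(\cdot,0)\|_{L^2}$; the singularity at the origin must be controlled by the gradient. The logarithmic factor $(\log(R/\rho))^{1/2}$ that unavoidably appears in the weighted one-dimensional estimate for $m$ is borderline, and the argument hinges on its product with $\rho^{-1/2}$ remaining integrable; this weighted radial estimate is precisely a one-dimensional counterpart of the logarithmic Hardy inequality of \Cref{hardyonballs}. Reassembling the two pieces and recalling $\int_0^R|m|^2 \lesssim \int_0^R\overline h(\cdot,0)^2 + R\int_{B_R^+(\bm 0)}|\nabla h|^2$ then yields the stated inequality with an explicit constant $\kappa$.
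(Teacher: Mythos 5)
Your proof is correct, but it takes a genuinely different route from the paper's. Both arguments begin with the same decomposition in disguise: the paper writes $\bar{h}(r,0)$ as the angular average of $\bar{h}(r,\theta)$ minus an angular-derivative correction, which is exactly your splitting $\bar h(\rho,0) = m(\rho) + \bigl(\bar h(\rho,0)-m(\rho)\bigr)$, and both control the oscillation piece identically, by Cauchy--Schwarz against $\int_0^R\int_0^\pi \rho^{-1}|\partial_\theta \bar h|^2\,d\theta\,d\rho \le \int_{B_R^+(\bm{0})}|\nabla h|^2\,d\bm{x}$, yielding the same $(\pi R \int |\nabla h|^2)^{1/2}$-type bound. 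The divergence is in the treatment of the averaged term: the paper stays two-dimensional, writing $r^{-1/2}|\bar h| = \frac{|h|}{|\bm{x}|(1+\log R - \log|\bm{x}|)}\cdot\frac{1+\log R-\log|\bm{x}|}{|\bm{x}|^{1/2}}$, computing $\int_{B_R^+(\bm{0})}\frac{(1+\log R-\log|\bm{x}|)^2}{|\bm{x}|}\,d\bm{x} = 5\pi R$, and then invoking the Machihara--Ozawa--Wadade logarithmic Hardy inequality (\Cref{hardyonballs}), adapted to the half-ball in \Cref{hardy} via an extension operator and the scaled form of \Cref{equivnorm}. You instead avoid the external inequality entirely: your pointwise bound $|m(\rho)| \lesssim R^{-1/2}\bigl(\int_0^R m^2\bigr)^{1/2} + (\log(R/\rho))^{1/2}\bigl(\int_0^R s|m'(s)|^2\,ds\bigr)^{1/2}$, obtained from the radial fundamental theorem of calculus and Cauchy--Schwarz, is a self-contained one-dimensional version of precisely that logarithmic Hardy inequality — which, as you observe, is all that is needed here, since only the angular average ever enters — and the borderline logarithm is then absorbed by the integrability of $\rho^{-1/2}(\log(R/\rho))^{1/2}$, just as the paper absorbs its logarithmic weight through the $5\pi R$ computation. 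All your intermediate estimates check out, including $\int_0^R \rho\,|m'|^2\,d\rho \le \pi^{-1}\int_{B_R^+(\bm{0})}|\nabla h|^2\,d\bm{x}$ by Jensen, the bound $\int_0^R m^2 \lesssim \int_0^R h(\cdot,0)^2 + R\int_{B_R^+(\bm{0})}|\nabla h|^2$ via the unweighted oscillation estimate, and the scaling homogeneity of degree $1/2$ in $R$ (the Dirichlet energy being conformally invariant in two dimensions), so the constant is indeed absolute. What each approach buys: yours is elementary, self-contained, and comes with explicit constants, at the cost of the extra one-dimensional bookkeeping for $m$; the paper's is shorter given the citation, requires no absolutely continuous representative of the radial average, and deliberately showcases \Cref{hardyonballs}, which the authors highlight as the key ingredient in their compactness arguments. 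The only points you should make explicit in a polished write-up are the standard density/trace justifications (that $m \in W^{1,1}_{\loc}((0,R))$ with $m' = \pi^{-1}\int_0^\pi \partial_\rho \bar h\,d\theta$, and that the fundamental-theorem manipulations are first performed for smooth $h$), at the same level of detail the paper uses for its own $W^{1,1}$ computations in \Cref{IBP}.
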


We begin by adapting \Cref{hardyonballs} to our framework.
\begin{lem}
\label{hardy}
There exists a constant $\overline{\kappa}$ such that for any $R > 0$ and $h \in H^1(B_R^+(\bm{0}))$,
\[
\int_{B_R^+(\bm{0})}\frac{h(\bm{x})^2}{|\bm{x}|^2\left(1 + \log R - \log|\bm{x}|\right)^2}\,d\bm{x} \le \overline{\kappa} \left(\int_{B_R^+(\bm{0})}\left|\nabla h(\bm{x})\right|^2\,d\bm{x} + \frac{1}{R}\int_0^R h(x,0)^2\,dx \right),
\]
where $h(\cdot,0)$ indicates the trace of $h$ on the positive real axis.
\end{lem}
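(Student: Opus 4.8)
The plan is to reduce the half-ball estimate to the full-ball inequality of \Cref{hardyonballs} by an even reflection, and then to absorb the resulting full $L^2$ norm of $h$ into the gradient and the boundary trace by means of a scaled Poincar\'e inequality. First I would extend $h$ to the whole ball $B_R(\bm{0})$ by setting $\tilde{h}(x,y) \coloneqq h(x,|y|)$. Since $h \in H^1(B_R^+(\bm{0}))$, the even reflection $\tilde{h}$ belongs to $H^1(B_R(\bm{0}))$, and a direct computation gives $\int_{B_R(\bm{0})}|\nabla \tilde{h}|^2\,d\bm{x} = 2\int_{B_R^+(\bm{0})}|\nabla h|^2\,d\bm{x}$ and $\int_{B_R(\bm{0})}\tilde{h}^2\,d\bm{x} = 2\int_{B_R^+(\bm{0})}h^2\,d\bm{x}$. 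Moreover, the weight $|\bm{x}|^{-2}(1 + \log R - \log|\bm{x}|)^{-2}$ is radial, hence invariant under the reflection $(x,y) \mapsto (x,-y)$, so that the corresponding weighted integral of $\tilde{h}^2$ over $B_R(\bm{0})$ equals twice that of $h^2$ over $B_R^+(\bm{0})$.

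Next I would apply \Cref{hardyonballs} to $\tilde{h}$. Bounding the radial derivative pointwise by the full gradient, $\bigl|\tfrac{\bm{x}}{|\bm{x}|}\cdot \nabla \tilde{h}\bigr| \le |\nabla \tilde{h}|$, then squaring the resulting inequality and using $(a+b)^2 \le 2a^2 + 2b^2$, the symmetry relations above yield
\[
\int_{B_R^+(\bm{0})}\frac{h^2}{|\bm{x}|^2(1 + \log R - \log|\bm{x}|)^2}\,d\bm{x} \le c\left(\frac{1}{R^2}\int_{B_R^+(\bm{0})}h^2\,d\bm{x} + \int_{B_R^+(\bm{0})}|\nabla h|^2\,d\bm{x}\right)
\]
for some absolute constant $c$. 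It now remains only to replace the full $L^2$ norm $R^{-2}\int_{B_R^+(\bm{0})}h^2\,d\bm{x}$ by the gradient and the trace.

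To this end I would establish the scale-invariant Poincar\'e-type inequality
\[
\frac{1}{R^2}\int_{B_R^+(\bm{0})}h^2\,d\bm{x} \le c\left(\int_{B_R^+(\bm{0})}|\nabla h|^2\,d\bm{x} + \frac{1}{R}\int_0^R h(x,0)^2\,dx\right),
\]
which by the change of variables $\bm{x} = R\bm{y}$ reduces to the case $R = 1$, namely $\|h\|_{L^2(B_1^+(\bm{0}))}^2 \le c\bigl(\|\nabla h\|_{L^2(B_1^+(\bm{0}))}^2 + \|h(\cdot,0)\|_{L^2((0,1))}^2\bigr)$. The latter follows from a standard compactness argument: were it false, there would be a sequence $\{h_n\}_n$ with $\|h_n\|_{L^2(B_1^+(\bm{0}))} = 1$ while both the gradient norm and the trace norm tend to zero; by Rellich's theorem a subsequence converges strongly in $L^2$ and weakly in $H^1$ to a function with vanishing gradient and vanishing trace on $(0,1)$, forcing the limit to be the zero constant and contradicting $\|h_n\|_{L^2} = 1$. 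Combining the last two displays with the conclusion of the previous step proves the lemma.

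I do not anticipate a genuine obstacle here; the only point requiring care is the bookkeeping of the powers of $R$ in the scaling step. A useful consistency check is that every term appearing in the statement — the weighted integral, the Dirichlet energy, and the trace term $R^{-1}\int_0^R h(x,0)^2\,dx$ — is separately invariant under the rescaling $\bm{x} = R\bm{y}$ (the Dirichlet energy because the problem is two-dimensional), so that the inequality genuinely depends only on the shape of the half-ball and the constant $\overline{\kappa}$ may be taken independent of $R$.
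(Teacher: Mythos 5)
Your proposal is correct and follows essentially the same route as the paper: extend $h$ to the full ball, apply \Cref{hardyonballs}, and then absorb the term $R^{-2}\int_{B_R^+(\bm{0})}h^2\,d\bm{x}$ via a scale-invariant Poincar\'e inequality with the trace term on $(0,R)\times\{0\}$. The only differences are cosmetic: you use an explicit even reflection (which makes the $R$-independence of the extension constant transparent) where the paper invokes an abstract extension operator, and you prove the unit-scale Poincar\'e step by a direct Rellich compactness argument where the paper cites \Cref{equivnorm} together with the same rescaling, whose proof is precisely that compactness argument.
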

\begin{proof}
Since $B_R^+(\bm{0})$ is an extension domain, we can find $\hat{h} \in H^1(B_R(\bm{0}))$ such that $\hat{h}(\bm{x}) = h(\bm{x})$ for $\mathcal{L}^2$-a.e. $\bm{x} \in B_R^+(\bm{0})$ and with the property that
\[
\begin{aligned}
\|\hat{h}\|_{L^2(B_R(\bm{0}))} \le &\ C_1\|h\|_{L^2(B_R^+(\bm{0}))}, \\
\|\nabla \hat{h}\|_{L^2(B_R(\bm{0});\RR^2)} \le &\ C_1 \|\nabla h\|_{L^2(B_R^+(\bm{0});\RR^2)}, 
\end{aligned}
\]
for some constant $C_1 > 0$ independent of $R$. \Cref{hardyonballs} applied to the function $\hat{h}$ and the previous estimates yield 
\[
\int_{B_R^+(\bm{0})}\frac{h(\bm{x})^2}{|\bm{x}|^2\left(1 + \log R - \log|\bm{x}|\right)^2}\,d\bm{x} \le C_2\left(\int_{B_R^+(\bm{0})}\left|\nabla h(\bm{x})\right|^2\,d\bm{x} + \frac{1}{R^2}\int_{B_R^+(\bm{0})}h(\bm{x})^2\,d\bm{x}\right),
\]
for some constant $C_2 > 0$ independent of $h$ and $R$. By \Cref{equivnorm}, together with a simple rescaling argument, we deduce that
\[
\frac{1}{R^2} \int_{B_R^+(\bm{0})}h(\bm{x})^2\,d\bm{x} \le C_3\left(\int_{B_R^+(\bm{0})}|\nabla h(\bm{x})|^2\,d\bm{x} + \frac{1}{R}\int_0^R h(x,0)^2\,dx\right)
\]
for some constant $C_3 > 0$, which is again independent of both $h$ and $R$; this concludes the proof.
\end{proof}

\begin{proof}[Proof of \Cref{lemmacpt}]
By the fundamental theorem of calculus,
\[
\bar{h}(r,\theta) = \bar{h}(r,0) + \int_0^{\theta}\partial_{\theta}\bar{h}(r,\alpha)\,d\alpha,
\]
and so, multiplying both sides by $r^{-1/2}$ and integrating over $B_R^+(\bm{0})$, we get
\begin{align*}
-\int_0^Rr^{-1/2}\bar{h}(r,0)\,dr = &\ -\frac{1}{\pi}\int_0^{\pi}\int_0^Rr^{-1/2}\bar{h}(r,\theta)\,dr d\theta + \frac{1}{\pi}\int_0^{\pi}\int_0^R\int_0^{\theta}r^{-1/2}\partial_{\theta}\bar{h}(r,\alpha)\,d\alpha dr d\theta \\
= &\ -\frac{1}{\pi}\int_0^{\pi}\int_0^Rr^{-1/2}\bar{h}(r,\theta)\,dr d\theta + \int_0^{\pi}\int_0^R\frac{(\pi - \theta)}{\pi}r^{-1/2}\partial_{\theta}\bar{h}(r,\theta)\,drd\theta, 
\end{align*}
where the last equality follows from Fubini's theorem. In particular,
\begin{equation}
\label{lemmacpt1}
\int_0^Rr^{-1/2}|\bar{h}(r,0)|\,dr \le \frac{1}{\pi}\int_0^{\pi}\int_0^Rr^{-1/2}|\bar{h}(r,\theta)|\,dr d\theta + \int_0^{\pi}\int_0^R r^{-1/2}|\partial_{\theta}\bar{h}(r,\theta)|\,drd\theta,
\end{equation}
and thus we proceed to estimate the terms on the right-hand side of (\ref{lemmacpt1}). Passing to cartesian coordinates,
\begin{align*}
\int_0^{\pi}\int_0^Rr^{-1/2}|\bar{h}(r,\theta)|\,dr d\theta = &\ \int_{B_R^+(\bm{0})}\frac{|h(\bm{x})|}{|\bm{x}|\left(1 + \log R - \log |\bm{x}|\right)}\frac{\left(1 + \log R - \log |\bm{x}|\right)}{|\bm{x}|^{1/2}}\,d\bm{x} \\
\le &\ (5 \pi R)^{1/2}\left(\int_{B_R^+(\bm{0})}\frac{h(\bm{x})^2}{|\bm{x}|^2\left(1 + \log R - \log |\bm{x}|\right)^2}\,d\bm{x}\right)^{1/2},
\end{align*}
where in the last step we have used H\"older's inequality together with the fact that
\[
\int_{B_R^+(\bm{0})}\frac{\left(1 + \log R - \log |\bm{x}|\right)^2}{|\bm{x}|}d\bm{x} =  \pi \int_0^R\left(1 + \log R - \log r\right)^2\,dr = 5 \pi R.
\]
Then, from \Cref{hardy} we deduce that 
\begin{equation}
\label{lemmacpt2}
\int_0^{\pi}\int_0^Rr^{-1/2}|\bar{h}(r,\theta)|\,dr d\theta \le (5 \pi \overline{\kappa})^{1/2}\left(R \int_{B_R^+(\bm{0})} |\nabla h(\bm{x})|^2\,d\bm{x} + \int_0^Rh(x,0)^2\,dx \right)^{1/2}.
\end{equation}
On the other hand, H\"older's inequality yields
\begin{align}
\label{lemmacpt3}
\int_0^{\pi}\int_0^R r^{-1/2}|\partial_{\theta}\bar{h}(r,\theta)|\,drd\theta \le &\ \left(\pi R \int_0^{\pi}\int_0^Rr^{-1}|\partial_{\theta}\bar{h}(r,\theta)|^2\,drd\theta\right)^{1/2} \notag \\
\le &\ \left(\pi R\int_{B_R^+(\bm{0})} |\nabla h(\bm{x})|^2\,d\bm{x}\right)^{1/2},
\end{align}
and so the desired inequality follows from (\ref{lemmacpt1}), (\ref{lemmacpt2}), and (\ref{lemmacpt3}).
\end{proof}

\subsection{Mixed boundary conditions: Gamma-convergence of order one}
In this section we prove \Cref{1gc-cpt} and \Cref{1gc}. We recall that we use the notations (\ref{origin}) and (\ref{polarxi}).
\begin{proof}[Proof of \Cref{1gc-cpt}]
By \Cref{convofmin} we have that $v_n \to u_0$ in $H^1(\Omega)$. For every $n \in \NN$, let $z_n \in L^2(\Omega)$ be such that $v_n = u_0 + \e_n\sqrt{|\log \e_n|}z_n$. Then $\F_{\e_n}^{(1)}(v_n)$ can be rewritten as  
\[
\F_{\e}^{(1)}(v_n) = \frac{1}{\sqrt{|\log\e_n|}}\int_{\Omega} \left( \nabla u_0 \cdot \nabla z_n + f z_n \right)\,d\bm{x} + \frac{\e_n}{2}\int_{\Omega}|\nabla z_n|^2\,d\bm{x} + \frac{1}{2}\int_{\Gamma_D}z_n^2\,d\mathcal{H}^1,
\]
and an application of \Cref{IBP} yields
\begin{equation}
\label{Fe1cpt}
\begin{aligned}
\F_{\e_n}^{(1)}(v_n) = &\ \frac{1}{\sqrt{|\log\e_n|}} \left( \int_{\Gamma_D}\partial_{\nu}u^0_{\reg}z_n\,d\mathcal{H}^1 - \sum_{i = 1}^2\frac{c_i}{2}\int_0^{\rho}\bar{\varphi}(r_i)r_i^{-1/2}\bar{z}^{(i)}_n(r_i,0)\,dr_i\right) \\
&\ \quad + \frac{\e_n}{2}\int_{\Omega}|\nabla z_n|^2\,d\bm{x} + \frac{1}{2} \int_{\Gamma_D}z_n^2\,d\mathcal{H}^1.
\end{aligned}
\end{equation}
For $n$ large enough so that $2\e_n \le \rho$, we write
\begin{equation}
\label{split}
\int_0^{\rho}\bar{\varphi}(r_i)r_i^{-1/2}\bar{z}^{(i)}_n(r_i,0)\,dr_i = \int_0^{\e_n}r_i^{-1/2}\bar{z}^{(i)}_n(r_i,0)\,dr_i + \int_{\e_n}^{\rho}\bar{\varphi}(r_i)r_i^{-1/2}\bar{z}^{(i)}_n(r_i,0)\,dr_i
\end{equation}
and proceed to estimate both terms on the right-hand side separately. By \Cref{lemmacpt} we obtain
\begin{equation}
\label{RN1cpt}
\int_0^{\e_n}r_i^{-1/2}|\bar{z}^{(i)}_n(r_i,0)|\,dr_i \le \kappa \left(\e_n\int_{B_{\e_n}(\bm{x}_i) \cap \Omega}|\nabla z_n|^2\,d\bm{x}\right)^{1/2} + \kappa \left(\int_0^{\e_n}\bar{z}^{(i)}_n(r_i,0)^2\,dr_i\right)^{1/2},
\end{equation}
while by H\"older's inequality we get
\begin{equation}
\label{RN2cpt}
\int_{\e_n}^{\rho}\bar{\varphi}(r_i)r_i^{-1/2}|\bar{z}^{(i)}_n(r_i,0)|\,dr_i \le \sqrt{\log \rho + |\log \e_n|}\left(\int_{\e_n}^{\rho}\bar{z}^{(i)}_n(r_i,0)^2\,dr_i\right)^{1/2}.
\end{equation}
Consequently, from (\ref{Fe1cpt}), (\ref{RN1cpt}), and (\ref{RN2cpt}) we deduce that
\[
\begin{aligned}
\F_{\e_n}^{(1)}(v_n) \ge &\ \frac{1}{2}\|z_n\|_{L^2(\Gamma_D)}^2 - \left(\frac{\|\partial_{\nu}u^0_{\reg}\|_{L^2(\Gamma_D)}}{\sqrt{|\log\e_n|}} + \frac{|c_i|(\kappa + \sqrt{\log \rho + |\log\e_n|})}{2\sqrt{|\log\e_n|}}\right)\|z_n\|_{L^2(\Gamma_D)} \\
&\ \quad + \frac{1}{2}\|\e_n^{1/2}\nabla z_n\|_{L^2(\Omega;\RR^2)}^2 - \frac{|c_i|\kappa}{2\sqrt{|\log\e_n|}}\|\e_n^{1/2}\nabla z_n\|_{L^2(\Omega;\RR^2)},
\end{aligned}
\]
and so (\ref{1cpt1}) and (\ref{1cpt2}) are proved at once.
\end{proof}

\begin{proof}[Proof of \Cref{1gc}]
\textbf{Step 1: }Let $\e_n \to 0^+$ and $\{v_n\}_n$ be a sequence of functions in $L^2(\Omega)$ such that $v_n \to v$ in $L^2(\Omega)$. Reasoning as in the proof of \Cref{0gc}, we can assume without loss of generality that 
\[
\liminf_{n \to \infty} \F_{\e_n}^{(1)}(v_n) = \lim_{n \to \infty} \F_{\e_n}^{(1)}(v_n) < \infty.
\]
In particular, $\F_{\e_n}^{(1)}(v_n) < \infty$ for every $n$ sufficiently large. Let $\{v_{n_k}\}_k$ be a subsequence of $\{v_n\}_n$ given as in \Cref{1gc-cpt} and define
\begin{equation}
\label{xiin}
\bar{\xi}_n^{(i)}(r_i) \coloneqq \frac{c_i}{2\sqrt{|\log\e_n|}}\bar{\varphi}(r_i)r_i^{-1/2}.
\end{equation}
Arguing as in the proof of \Cref{1gc-cpt} (see (\ref{Fe1cpt}) and (\ref{RN1cpt})) we arrive at
\begin{align}
\F_{\e_{n_k}}^{(1)}(v_{n_k}) \ge &\ \frac{1}{2}\|z_{n_k}\|_{L^2(\Gamma_D)}^2 - \left(\frac{\|\partial_{\nu}u^0_{\reg}\|_{L^2(\Gamma_D)}}{\sqrt{|\log\e_{n_k}|}} + \frac{|c_i|\kappa}{2\sqrt{|\log\e_{n_k}|}}\right)\|z_{n_k}\|_{L^2(\Gamma_D)} \notag \\
&\ \quad - \frac{|c_i|\kappa}{2\sqrt{|\log\e_{n_k}|}}\|\e_{n_k}^{1/2}\nabla z_{n_k}\|_{L^2(\Omega;\RR^2)} + \frac{1}{2}\|\e_{n_k}^{1/2}\nabla z_{n_k}\|_{L^2(\Omega;\RR^2)}^2 \notag \\
&\ \quad - \sum_{i = 1}^2\int_{\e_{n_k}}^{\rho}\bar{\xi}_{n_k}^{(i)}(r_i)\bar{z}^{(i)}_{n_k}(r_i,0)\,dr_i. \label{Fenk1}
\end{align}
Then, as $k \to \infty$, we have
\begin{align}
\liminf_{k \to \infty}\F_{\e_{n_k}}^{(1)}(v_{n_k}) \ge &\ \liminf_{k \to \infty}\sum_{i = 1}^2\int_{\e_{n_k}}^{\rho}\left(\frac{1}{2}\bar{z}^{(i)}_{n_k}(r_i,0)^2 - \bar{\xi}^{(i)}_{n_k}(r_i)\bar{z}^{(i)}_{n_k}(r_i,0)\right)\,dr_i \notag \\
= &\ \liminf_{k \to \infty}\sum_{i = 1}^2\left[\frac{1}{2}\int_{\e_{n_k}}^{\rho}\left(\bar{z}^{(i)}_{n_k}(r_i,0) - \bar{\xi}_{n_k}^{(i)}(r_i)\right)^2\,dr_i - \frac{1}{2}\int_{\e_{n_k}}^{\rho}\bar{\xi}_{n_k}^{(i)}(r_i)^2\,dr_i\right] \notag \\
\ge &\  -\frac{1}{2}\sum_{i = 1}^2\liminf_{k \to \infty}\int_{\e_{n_k}}^{\rho}\bar{\xi}_{n_k}^{(i)}(r_i)^2\,dr_i \notag \\
= &\ -\frac{1}{8}\sum_{i = 1}^2c_i^2\liminf_{k \to \infty}\frac{1}{|\log\e_{n_k}|}\int_{\e_{n_k}}^{\rho}\bar{\varphi}(r_i)^2r_i^{-1}\,dr_i \notag \\
\ge &\ -\frac{1}{8}\sum_{i = 1}^2c_i^2\liminf_{k \to \infty}\frac{1}{|\log\e_{n_k}|}\left(\log \rho + |\log \e_{n_k}|\right) = -\frac{1}{8}\sum_{i = 1}^2c_i^2, \label{gammalininf1}
\end{align}
where in the second to last step we have used (\ref{xiin}).
\newline
\textbf{Step 2: }For every $v \in L^2(\Omega) \setminus \{u_0\}$, the constant sequence $v_n = v$ is a recovery sequence. Then let $v = u_0$ and consider the radial function $\zeta_{i,n}$ given in polar coordinates at $\bm{x}_i$ by
\begin{equation}
\label{zetain}
\bar{\zeta}_{i,n}(r_i) \coloneqq \bar{\xi}_n^{(i)}(r_i)\left(1 - \bar{\varphi}\left(\frac{\rho}{\e_n}r_i\right)\right) = \frac{c_i}{2\sqrt{|\log \e_n|}}\bar{\varphi}(r_i)\left(1 - \bar{\varphi}\left(\frac{\rho}{\e_n}r_i\right)\right)r_i^{-1/2},
\end{equation}
where $\bar{\xi}^{(i)}_n$ is the function defined in (\ref{xiin}). We define 
\begin{equation}
\label{recFe1}
z_n(\bm{x}) \coloneqq
\left\{
\arraycolsep=1.4pt\def\arraystretch{1.6}
\begin{array}{ll}
\zeta_{i,n}(\bm{x}) & \text{ if } \bm{x} \in B_r(\bm{x}_i) \cap \Omega \text{ with } r < \rho, \\
0 & \text{ otherwise.}
\end{array}
\right.
\end{equation}
Notice that if we let
\[
\bar{\Psi}_{i,n}(r_i) \coloneqq \bar{\varphi}(r_i)\left(1 - \bar{\varphi}\left(\frac{\rho}{\e_n}r_i\right)\right),
\]
then $\bar{\Psi}_{i,n} \colon \RR^+ \to [0,1]$ and satisfies
\begin{equation}
\label{propertiesPsi}
\left\{
\arraycolsep=1.4pt\def\arraystretch{1.6}
\begin{array}{rll}
\bar{\Psi}_{i,n}(r_i) = & 1  & \text{ if } \e_n \le r_i < \rho/2, \\
\bar{\Psi}_{i,n}(r_i) = & 0 & \text{ if } 0 \le r_i \le \e_n/2 \text{ or } \rho \le r, \\
|\bar{\Psi}_{i,n}'(r_i)| \le & \frac{c}{\e_n} & \text{ if }  \e_n/2 \le r_i \le \e_n, \\
|\bar{\Psi}_{i,n}'(r_i)| \le & c & \text{ if } \rho/2 \le r_i \le \rho,
\end{array}
\right.
\end{equation}
for some constant $c > 0$ independent of $n$. Finally, set 
\[
v_n \coloneqq u_0 + \e_n\sqrt{|\log \e_n|}z_n.
\]
Notice that $v_n \to u_0$ in $L^2(\Omega)$ since the sequence $\{z_n\}_n$ is uniformly bounded in $L^2(\Omega)$, indeed
\begin{align*}
\int_{\Omega}z_n^2\,d\bm{x} \le \sum_{i = 1}^2 \frac{c_i^2\pi}{4|\log \e_n|}\int_{\e_n/2}^{\rho}r_i^{-1}\,dr_i = \frac{\pi (\log \rho + |\log \e_n| + \log 2)}{4|\log \e_n|}\sum_{i = 1}^2c_i^2.
\end{align*} 
Next, we claim that $\e_n^{1/2}\nabla z_n \to \bm{0}$ in $L^2(\Omega; \RR^2)$. Indeed, using the notation above we have that 
\[
\bar{\zeta}_{i,n}(r_i) = \frac{c_i}{2\sqrt{|\log \e_n|}}\bar{\Psi}_{i,n}(r_i)r_i^{-1/2},
\]
and therefore
\begin{align}
\label{egrad0}
\e_n\int_{\Omega}|\nabla z_n|^2\,d\bm{x} = &\ \frac{\e_n}{|\log \e_n|}\left(\sum_{i = 1}^2\frac{c_i^2\pi}{4}\right)\int_0^{\rho}\left(\bar{\Psi}_{i,n}'(r_i)r_i^{-1/2} -\frac{1}{2}r_i^{-3/2}\bar{\Psi}_{i,n}(r_i)\right)^2r_i\,dr_i \notag \\
\le &\ \frac{\e_n}{|\log \e_n|} \left(\sum_{i = 1}^2\frac{c_i^2\pi}{2}\right)\int_0^{\rho}\left(\bar{\Psi}_{i,n}'(r_i)^2 + \frac{1}{4}r_i^{-2}\bar{\Psi}_{i,n}(r_i)^2\right)\,dr_i.
\end{align}
From (\ref{propertiesPsi}) we see that
\begin{equation}
\label{psi'2}
\int_0^{\rho}\bar{\Psi}_{i,n}'(r_i)^2\,dr_i = \int_{\e_n/2}^{\e_n}\bar{\Psi}_{i,n}'(r_i)^2\,dr_i + \int_{\rho/2}^{\rho}\bar{\Psi}_{i,n}'(r_i)^2\,dr_i \le c^2\left(\frac{1}{2\e_n} + \frac{\rho}{2}\right)
\end{equation}
and
\begin{equation}
\label{r^-2}
\int_0^{\rho}r_i^{-2}\bar{\Psi}_{i,n}(r_i)^2\,dr_i \le \int_{\e_n/2}^{\rho}r_i^{-2}\,dr_i = \frac{2}{\e_n} - \frac{1}{\rho}.
\end{equation}
Combining (\ref{egrad0}) with the estimates (\ref{psi'2}) and (\ref{r^-2}) we obtain
\begin{equation}
\label{egradto0}
\e_n\int_{\Omega}|\nabla z_n|^2\,d\bm{x} \le \frac{\e_n}{|\log \e_n|} \left(\sum_{i = 1}^2\frac{c_i^2\pi}{2}\right)\left(\frac{c^2}{2\e_n} + \frac{c^2\rho}{2} + \frac{1}{2\e_n} - \frac{1}{4\rho}\right) \to 0
\end{equation}
and the claim is proved. From (\ref{Fe1cpt}), using (\ref{split}), (\ref{RN1cpt}), (\ref{RN2cpt}), and (\ref{xiin}) we have
\begin{align}
\F_{\e_n}^{(1)}(v_n) \le &\ \frac{1}{2}\|z_n\|_{L^2(\Gamma_D)}^2 + \left(\frac{\|\partial_{\nu}u^0_{\reg}\|_{L^2(\Gamma_D)}}{\sqrt{|\log\e_n|}} + \frac{|c_i|\kappa}{2\sqrt{|\log\e_n|}}\right)\|z_n\|_{L^2(\Gamma_D)} + \frac{1}{2}\|\e_n^{1/2}\nabla z_n\|_{L^2(\Omega; \RR^2)}^2 \notag \\
&\ \quad + \frac{|c_i|\kappa}{2\sqrt{|\log\e_n|}}\|\e_n^{1/2}\nabla z_n\|_{L^2(\Omega;\RR^2)} - \sum_{i = 1}^2\int_{\e_n}^{\rho}\bar{\xi}_n^{(i)}(r_i)\bar{\zeta}_{i,n}(r_i)\,dr_i, \label{Fen1vnsup}
\end{align}
By (\ref{egradto0}) we have that the second, third, and fourth member on the right-hand side of the previous inequality vanish as $n \to \infty$. Since $\bar{\varphi}\left(\frac{\rho}{\e_n}r_i\right) = 0$ for $r_i \in [\e_n,\rho]$, by (\ref{xiin}) and (\ref{zetain}), 
\begin{equation}
\label{z=x}
\bar{\zeta}_{i,n} = \bar{\xi}^{(i)}_n \quad \text{ in } [\e_n,\rho].
\end{equation}
Consequently, from (\ref{xiin}), (\ref{z=x}), (\ref{recFe1}), and the fact that $\bar{\varphi} \equiv 1$ in $[0,\rho/2]$,
\begin{align}
\limsup_{n \to \infty}\F_{\e_n}^{(1)}(v_n) \le &\ \limsup_{n \to \infty}\left\{\frac{1}{2}\|z_n\|_{L^2(\Gamma_D)}^2 - \sum_{i = 1}^2\int_{\e_n}^{\rho}\bar{\xi}_n^{(i)}(r_i)\bar{\zeta}_{i,n}(r_i)\,dr_i\right\} \notag \\
= &\ \limsup_{n \to \infty}\sum_{i = 1}^2\int_{\e_n}^{\rho}\left(\frac{1}{2}\bar{\zeta}_{i,n}(r_i)^2 - \bar{\xi}_n^{(i)}(r_i)\bar{\zeta}_{i,n}(r_i)\right)\, dr_i \notag \\ 
= &\ \limsup_{n \to \infty}\sum_{i = 1}^2-\frac{1}{2} \int_{\e_n}^{\rho}\bar{\xi}_{n}^{(i)}(r_i)^2\,dr_i \notag \\
\le &\ -\frac{1}{8}\sum_{i = 1}^2c_i^2\liminf_{n \to \infty}\frac{1}{|\log \e_n|}\left(\int_{\e_n}^{\rho/2}r_i^{-1}\,dr_i + \int_{\rho/2}^{\rho}\bar{\varphi}(r_i)^2r_i^{-1}\,dr_i\right) \notag \\
= &\ -\frac{1}{8}\sum_{i = 1}^2c_i^2. \label{limsupFen1}
\end{align}
The energy expansion (\ref{Fe1Tay}) follows from Theorem 1.2 in \cite{MR1202527}.
\end{proof}

\subsection{An auxiliary variational problem}
In this section we study the functional 
\[
\J_i(w) \coloneqq \int_{\RR^2_+}|\nabla w(\bm{x})|^2\,d\bm{x} + \int_0^1\left(w(x,0)^2 - c_ix^{-1/2}w(x,0)\right)\,dx + \int_1^{\infty}\left(w(x,0) - \frac{c_i}{2}x^{-1/2}\right)^2\,dx
\]
defined in
\[
H \coloneqq \{w \in H^1_{\loc}(\RR^2_+) : w \in H^1(B_R^+(\bm{0})) \text{ for every } R > 0\},
\]
where $w(\cdot,0)$ indicates the trace of $w$ on the positive real axis.
This functional appears in the characterization of the second order $\Gamma$-convergence of $\F_{\e}$ (see (\ref{Ji}), (\ref{spaceH}), (\ref{A_i}), \Cref{2gc-cpt}, and \Cref{2gc}).
\begin{prop}
\label{auxVP}
Let $\J_i$ and $H$ be given as above. Then $A_i \coloneqq \inf\{\J_i(w) : w \in H\} \in \RR$ and there exists $w_i \in H$ such that $\J_i(w_i) = A_i$. Furthermore, $w_i$ is a weak solution to the mixed problem $(\ref{aux})$.
\end{prop}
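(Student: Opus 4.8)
The plan is to use the direct method of the calculus of variations, the recurring difficulty being the linear boundary term $-c_i\int_0^1 x^{-1/2}w(x,0)\,dx$, whose weight $x^{-1/2}$ fails to belong to $L^2((0,1))$; this is exactly where the Hardy-type estimate of \Cref{lemmacpt} is used.

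First I would establish $A_i\in\RR$. For the lower bound I apply \Cref{lemmacpt} with $R=1$ to bound $\int_0^1 x^{-1/2}|w(x,0)|\,dx$ by $\kappa(\int_{B_1^+(\bm{0})}|\nabla w|^2)^{1/2}+\kappa(\int_0^1 w(x,0)^2\,dx)^{1/2}$, and then absorb these contributions into $\int_{\RR^2_+}|\nabla w|^2$ and $\int_0^1 w(x,0)^2\,dx$ via Young's inequality, obtaining
\[
\J_i(w)\ge \frac34\int_{\RR^2_+}|\nabla w|^2\,d\bm{x}+\frac12\int_0^1 w(x,0)^2\,dx+\int_1^{\infty}\left(w(x,0)-\frac{c_i}{2}x^{-1/2}\right)^2\,dx - Cc_i^2,
\]
so $A_i>-\infty$. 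For $A_i<\infty$ I exhibit one finite-energy competitor: fixing a smooth cutoff $\eta$ with $\eta\equiv0$ on $[0,1/2]$ and $\eta\equiv1$ on $[1,\infty)$, the function given in polar coordinates by $\frac{c_i}{2}\eta(r)r^{-1/2}\cos(\theta/2)$ is harmonic away from the origin, equals $\frac{c_i}{2}x^{-1/2}$ on $(1,\infty)\times\{0\}$ so that the last integral vanishes, and has finite Dirichlet energy because the cutoff removes the nonintegrable singularity at $\bm{0}$; hence it lies in $H$ with finite $\J_i$.

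Next I would run the direct method, where lower semicontinuity is the main obstacle. Along a minimizing sequence $\{w_n\}\subset H$ the coercivity estimate above bounds $\int_{\RR^2_+}|\nabla w_n|^2$, $\int_0^1 w_n(x,0)^2\,dx$ and $\int_1^{\infty}(w_n(x,0)-\frac{c_i}{2}x^{-1/2})^2\,dx$ uniformly in $n$. Since $\RR^2_+$ admits no global Poincar\'e inequality, I localize: applying \Cref{equivnorm} (after rescaling) on each half-ball $B_R^+(\bm{0})$ with Dirichlet portion $(0,1)\times\{0\}$ bounds $\|w_n\|_{H^1(B_R^+(\bm{0}))}$ for every fixed $R\ge1$, and a diagonal extraction yields a subsequence and a limit $w\in H$ with $w_n\rightharpoonup w$ in $H^1(B_R^+(\bm{0}))$ for all $R$ and $\nabla w_n\rightharpoonup\nabla w$ in $L^2(\RR^2_+;\RR^2)$. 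Compactness of the trace operator then gives $w_n(\cdot,0)\to w(\cdot,0)$ strongly in $L^2$ on every bounded boundary segment. The delicate point is the linear term: splitting $\int_0^1 x^{-1/2}w_n(x,0)\,dx$ at a small $\delta$, on $(\delta,1)$ the bounded weight lets strong $L^2$ convergence pass to the limit, while on $(0,\delta)$ \Cref{lemmacpt} with $R=\delta$ gives
\[
\int_0^{\delta} x^{-1/2}|w_n(x,0)|\,dx\le\kappa\left(\delta\sup_n\int_{\RR^2_+}|\nabla w_n|^2\,d\bm{x}\right)^{1/2}+\kappa\left(\sup_n\int_0^{\delta} w_n(x,0)^2\,dx\right)^{1/2},
\]
whose first term is $O(\delta^{1/2})$ uniformly in $n$ and whose second term tends to $0$ as $\delta\to0$ uniformly in $n$ by equi-integrability of $\{w_n(\cdot,0)^2\}$ (a consequence of strong $L^2$ trace convergence); hence $\int_0^1 x^{-1/2}w_n\to\int_0^1 x^{-1/2}w$. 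The quadratic term on $(0,1)$ converges by strong trace convergence, and weak lower semicontinuity of the $L^2$ norm handles both $\int_{\RR^2_+}|\nabla w|^2$ and $\int_1^{\infty}(\,\cdot\,)^2$, after identifying the weak $L^2((1,\infty))$ limit of $w_n(\cdot,0)-\frac{c_i}{2}x^{-1/2}$ with $w(\cdot,0)-\frac{c_i}{2}x^{-1/2}$ through the bounded-segment convergence. Altogether $\J_i(w)\le\liminf_n\J_i(w_n)=A_i$, and since $w\in H$ we conclude $\J_i(w)=A_i$, so $w_i:=w$ is a minimizer.

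Finally, for the Euler--Lagrange equation I compute the first variation: for any $\phi\in H$ of bounded support, $t\mapsto\J_i(w_i+t\phi)$ is a finite quadratic polynomial in $t$ (the term $\int_0^1 x^{-1/2}\phi(x,0)\,dx$ being finite again by \Cref{lemmacpt}), so minimality forces its derivative at $t=0$ to vanish, yielding
\[
\int_{\RR^2_+}\nabla w_i\cdot\nabla\phi\,d\bm{x}+\int_0^{\infty}\left(w_i(x,0)-\frac{c_i}{2}x^{-1/2}\right)\phi(x,0)\,dx=0.
\]
This is precisely the weak formulation of $(\ref{aux})$: it encodes $\Delta w_i=0$ in $\RR^2_+$, the homogeneous Neumann condition $\partial_{\nu}w_i=0$ on $(-\infty,0)\times\{0\}$, and the Robin condition $\partial_{\nu}w_i+w_i=\frac{c_i}{2}x^{-1/2}$ on $(0,\infty)\times\{0\}$, which completes the proof.
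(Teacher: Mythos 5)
Your proof is correct, and its skeleton coincides with the paper's: the direct method, with \Cref{lemmacpt} supplying both coercivity (so $A_i > -\infty$) and control of the linear boundary term, an explicit finite-energy competitor giving $A_i < \infty$, and the Euler--Lagrange equation obtained by first variation. The one genuine divergence is at the lower-semicontinuity step for $\int_0^1 x^{-1/2}w_n(x,0)\,dx$, which is the crux of the argument. The paper notes that $\{w_n\}_n$ is bounded in $H^1(B_1^+(\bm{0}))$ by \Cref{equivnorm}, hence in $H^{1/2}((0,1)\times\{0\})$, invokes the embedding of $H^{1/2}$ into $L^p$ for every $p < \infty$ to extract weak $L^p$ convergence of the traces for some $p > 2$, and pairs against $x^{-1/2} \in L^{p'}((0,1))$ with $p' < 2$. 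You instead use compactness of the trace operator (strong $L^2$ convergence on bounded segments) combined with a uniform-in-$n$ tail estimate near the origin, namely \Cref{lemmacpt} with $R = \delta$ plus equi-integrability of $\{w_n(\cdot,0)^2\}$. Both routes are valid: the paper's duality trick is shorter once the fractional embedding is granted, while yours is more self-contained (it reuses only \Cref{lemmacpt} and standard trace compactness) and makes explicit the diagonal extraction producing the limit $w \in H$, which the paper leaves implicit. Your competitor $\frac{c_i}{2}\eta(r)r^{-1/2}\cos(\theta/2)$ also differs from the paper's radial choice $\bar{v}(r,\theta) = \frac{c_i}{2}\min\{\sqrt{r}, r^{-1/2}\}$: yours annihilates the third integral exactly by matching the datum on $(1,\infty)$, whereas the paper's permits the exact computation $\J_i(v) = c_i^2(\pi - 3)/8$; either suffices. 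One cosmetic remark: after multiplication by the cutoff $\eta$ your competitor is no longer harmonic where $\eta' \neq 0$, but harmonicity plays no role here --- only finiteness of the energy does.
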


\begin{proof}
Let $v$ be the function given in polar coordinates by
\[
\bar{v}(r,\theta) \coloneqq
\left\{
\arraycolsep=1.8pt\def\arraystretch{2.4}
\begin{array}{ll}
\displaystyle \frac{c_i}{2\sqrt{r}} & \text{ if } r > 1 \text{ and } 0 < \theta < \pi , \\
\displaystyle \frac{c_i}{2}\sqrt{r} & \text{ if } r \le 1 \text{ and } 0 < \theta < \pi,
\end{array}
\right.
\]
where $(r,\theta)$ are polar coordinates centered at the origin of $\RR^2$ and such that the set $\{(r,0) : r > 0\}$ coincides with the positive real axis. Then $v \in H$ and $\J_i(v) < \infty$, indeed
\[
\J_i(v) = \int_0^{\pi}\int_0^{\infty}r(\partial_r\bar{v})^2\,drd\theta + \int_0^1\left(\bar{v}(r,0) - c_i\bar{v}(r,0)\right)\,dr = \frac{c_i^2(\pi - 3)}{8}.
\]
In turn, this implies that $A_i < \infty$. On the other hand, by \Cref{lemmacpt}, we see that for every $w \in H$,
\begin{align*}
\J_i(w) \ge &\ \int_{\RR^2_+}|\nabla w(\bm{x})|^2\,d\bm{x} + \int_0^1w(x,0)^2\,dx - |c_i|\kappa\left(\int_{B_1^+(\bm{0})}|\nabla w|^2\,d\bm{x}\right)^{1/2} \\
&\ \quad - |c_i|\kappa\left(\int_0^1w(x,0)^2\,dx\right)^{1/2} + \int_1^{\infty}\left(w(x,0) - \frac{c_i}{2}x^{-1/2}\right)^2\,dx,
\end{align*}
and so $A_i > - \infty$. Furthermore, we deduce that for an infimizing sequence it must be the case that (eventually extracting a subsequence which we don't relabel)
\begin{align*}
\nabla w_n \rightharpoonup &\ \nabla w \quad \quad \quad \quad \quad \quad \ \ \text{ in } L^2(\RR^2_+;\RR^2), \\
w_n(\cdot, 0) \rightharpoonup &\ w(\cdot, 0) \quad \quad \quad \quad \quad \ \, \text{ in } L^2((0,1) \times \{0\}), \\
w_n(\cdot, 0) - \frac{c_i}{2}x^{-1/2} \rightharpoonup &\ w(\cdot, 0) - \frac{c_i}{2}x^{-1/2} \quad \text{ in } L^2((1,\infty) \times \{0\}),
\end{align*}
for some $w \in H$, where $w_n(\cdot,0)$ and $w(\cdot,0)$ indicate the trace of $w_n$ and $w$ on the positive real axis. To conclude, it is enough to show that $\J_i$ is lower semicontinuous for sequences converging as above. The lower semicontinuity is certainly true for the nonnegative terms in $\J_i$, thanks to Fatou's lemma. In order to pass to the limit in the remaining term we can argue as follows. First, we observe that by \Cref{equivnorm} $\{w_n\}_n$ in bounded in $H^1(B_1^+(\bm{0}))$ and in particular in $H^{1/2}((0,1) \times \{0\})$. Next, we recall that $H^{1/2}((0,1) \times \{0\})$ embeds continuously into $L^p((0,1) \times \{0\})$ for every $p \in [1,\infty)$. Consequently, up to the extraction of a further subsequence, we can assume that $w_n \rightharpoonup w$ in $L^p((0,1) \times \{0\})$, $p > 2$. Therefore, we deduce that 
\[
\liminf_{n \to \infty}\int_0^1x^{-1/2}w_n(x,0)\,dx = \int_0^1x^{-1/2}w(x,0)\,dx.
\]	
This proves the existence of a global minimizer of $\J_i$ in $H$. The rest of proposition follows by considering variations of the functional $\J_i$; we omit the details.
\end{proof}

We remark that $w_i$ doesn't necessarily belong to the space $L^2(\RR^2_+)$, unless $c_i = 0$, in which case $w_i \equiv 0$. In the following lemma we prove an estimate on the $L^2$-norm of global minimizers in an annulus that escapes to infinity. This estimate will be crucial for the construction of the recovery sequence for $u_0$ in the proof of \Cref{2gc}.

\begin{lem}
\label{e_nL2}
Let  $\e_n \to 0^+$ and $w_i$ be given as in \Cref{auxVP}. Then
\[
\e_n^2\int_{B_{\rho/\e_n}^+(\bm{0}) \setminus B_{\rho/2\e_n}^+(\bm{0})}w_i^2\,d\bm{x} \to 0
\]
as $n \to \infty$.
\end{lem}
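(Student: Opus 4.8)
The plan is to reduce the claim to a fixed geometry by rescaling and then to invoke the equivalent-norm inequality of \Cref{equivnorm} together with the finiteness of the energy $\J_i(w_i) = A_i$. Set $R_n \coloneqq \rho/\e_n$, so that $R_n \to \infty$ and the region in question is the half-annulus $A_n \coloneqq B_{R_n}^+(\bm{0}) \setminus B_{R_n/2}^+(\bm{0})$. I would introduce the rescaled function $\tilde{w}_n(\bm{y}) \coloneqq w_i(R_n \bm{y})$, defined on the fixed half-annulus $A \coloneqq B_1^+(\bm{0}) \setminus B_{1/2}^+(\bm{0})$. Since the Dirichlet energy is scale invariant in two dimensions, a change of variables gives $\e_n^2 \int_{A_n} w_i^2 \, d\bm{x} = \rho^2 \int_A \tilde{w}_n^2 \, d\bm{y}$, so it suffices to prove that $\int_A \tilde{w}_n^2 \, d\bm{y} \to 0$.

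Next I would apply \Cref{equivnorm} on the fixed domain $A$, choosing as Robin portion of the boundary only the positive-axis segment $\Gamma_D^A \coloneqq \{(y_1, 0) : 1/2 < y_1 < 1\}$, which is nonempty and relatively open, so that $A$ and $\Gamma_D^A$ satisfy $(\ref{H0})$. This yields a constant $C$, \emph{independent of $n$} because the domain is now fixed, with
\[
\int_A \tilde{w}_n^2 \, d\bm{y} \le C \left( \int_A |\nabla \tilde{w}_n|^2 \, d\bm{y} + \int_{1/2}^1 \tilde{w}_n(y_1,0)^2 \, dy_1 \right).
\]
It then remains to show that both terms on the right-hand side vanish as $n \to \infty$.

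For the gradient term, scale invariance gives $\int_A |\nabla \tilde{w}_n|^2 \, d\bm{y} = \int_{A_n} |\nabla w_i|^2 \, d\bm{x}$. Since $\J_i(w_i) = A_i < \infty$, the Dirichlet integral $\int_{\RR^2_+} |\nabla w_i|^2 \, d\bm{x}$ is finite (this uses \Cref{lemmacpt} to control the indefinite boundary term in $\J_i$), and hence the tail integral over $A_n$ tends to zero by absolute continuity. For the boundary term, a change of variables gives $\int_{1/2}^1 \tilde{w}_n(y_1,0)^2 \, dy_1 = R_n^{-1} \int_{R_n/2}^{R_n} w_i(x,0)^2 \, dx$. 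Writing $w_i = (w_i - \tfrac{c_i}{2}x^{-1/2}) + \tfrac{c_i}{2}x^{-1/2}$ and using $(a+b)^2 \le 2a^2 + 2b^2$, the finiteness of $\int_1^\infty (w_i(x,0) - \tfrac{c_i}{2}x^{-1/2})^2 \, dx$ (again a consequence of $\J_i(w_i) < \infty$) together with $\int_{R_n/2}^{R_n} x^{-1} \, dx = \log 2$ shows that $\int_{R_n/2}^{R_n} w_i(x,0)^2 \, dx$ stays bounded; dividing by $R_n \to \infty$ yields convergence to zero.

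The main point to get right is the choice of $\Gamma_D^A$: the functional $\J_i$ provides $L^2$ control of the trace of $w_i$ only on the positive real axis, while on the negative axis $w_i$ satisfies a homogeneous Neumann condition and carries no such control. Taking $\Gamma_D^A$ to be only the positive-axis segment circumvents this, and is legitimate because \Cref{equivnorm} requires $\Gamma_D$ merely to be nonempty and relatively open. The other subtlety is to keep the constant $C$ uniform in $n$, which is exactly why the rescaling to the fixed half-annulus $A$ is performed before applying the inequality.
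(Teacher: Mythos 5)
Your proof is correct and takes essentially the same route as the paper's: both reduce to the fixed half-annulus $B_1^+(\bm{0})\setminus B_{1/2}^+(\bm{0})$ so that \Cref{equivnorm}, with the Robin portion taken on the positive-axis segment, gives an $n$-independent constant, then send the Dirichlet tail to zero using $\nabla w_i \in L^2(\RR^2_+;\RR^2)$ and handle the boundary term by splitting off the profile $\frac{c_i}{2}x^{-1/2}$, whose tail integral contributes only a bounded $\log 2$ term. The sole difference is bookkeeping: the paper states the scaled inequality on the large annulus and applies it to $\e_n w_i$, whereas you rescale the function itself to the fixed annulus, which is the same computation.
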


\begin{proof}
By applying \Cref{equivnorm} and by a rescaling argument in $B_1^+(\bm{0}) \setminus B_{1/2}^+(\bm{0})$ we can deduce that there exists a constant $c$, independent of $n$, such that 
\[
\int_{B_{\rho/\e_n}^+(\bm{0}) \setminus B_{\rho/2\e_n}^+(\bm{0})}w^2\,d\bm{x} \le \frac{c}{\e_n^2}\left(\int_{B_{\rho/\e_n}^+(\bm{0}) \setminus B_{\rho/2\e_n}^+(\bm{0})}|\nabla w|^2\,d\bm{x} + \e_n\int_{\rho/2\e_n}^{\rho/\e_n}w(x,0)^2\,dx\right)
\]
for every $w \in H^1(B_{\rho/\e_n}^+(\bm{0}) \setminus B_{\rho/2\e_n}^+(\bm{0}))$. If we apply the previous inequality to $w = \e_n w_i$ we obtain
\[
\e_n^2\int_{B_{\rho/\e_n}^+(\bm{0}) \setminus B_{\rho/2\e_n}^+(\bm{0})}w_i^2\,d\bm{x} \le c\left(\int_{B_{\rho/\e_n}^+(\bm{0}) \setminus B_{\rho/2\e_n}^+(\bm{0})}|\nabla w_i|^2\,d\bm{x} + \e_n\int_{\rho/2\e_n}^{\rho/\e_n}w_i(x,0)^2\,dx\right).
\]
The first term on the right-hand side vanishes as $n \to \infty$ since $\nabla w_i \in L^2(\RR^2_+;\RR^2)$, and the second term is shown to vanish by the following computation:
\begin{align*}
\e_n \int_{\rho/2\e_n}^{\rho/\e_n}w(x,0)^2\,dx \le &\ 2\e_n\int_{\rho/2\e_n}^{\rho/\e_n}\left(w_i(x,0) - \frac{c_i}{2}x^{-1/2}\right)^2\,dx + 2\e_n\int_{\rho/2\e_n}^{\rho/\e_n}\frac{c_i^2}{4x}\,dx \\
= &\ 2\e_n\int_{\rho/2\e_n}^{\rho/\e_n}\left(w_i(x,0) - \frac{c_i}{2}x^{-1/2}\right)^2\,dx + 2\e_n\log2 \to 0
\end{align*}
since $w_i(\cdot,0) - \frac{c_i}{2}x^{-1/2} \in L^2((1,\infty))$. This concludes the proof.
\end{proof}

\subsection{Mixed boundary conditions: Gamma-convergence of order two}
In this section we prove \Cref{2gc-cpt} and \Cref{2gc}. We recall that we use the notations (\ref{origin}) and (\ref{polarxi}).
\begin{proof}[Proof of \Cref{2gc-cpt}]
\textbf{Step 1: }By \Cref{convofmin} we have that $w_n \to u_0$ in $H^1(\Omega)$. For every $n \in \NN$, let $s_n \in L^2(\Omega)$ be such that 
\begin{equation}
\label{wnsn}
w_n = u_0 + \sqrt{\e_n}s_n.
\end{equation}
Then, by (\ref{Fe}), (\ref{F0}), (\ref{Fe1}), (\ref{F1}), and (\ref{Fe2}), $\F_{\e_n}^{(2)}(w_n)$ can be rewritten as 
\[
\F_{\e_n}^{(2)}(w_n) = \frac{1}{\sqrt{\e_n}}\int_{\Omega} \left(\nabla u_0 \cdot \nabla s_n + f s_n\right)\,d\bm{x} + \frac{1}{2}\int_{\Omega}|\nabla s_n|^2\,d\bm{x} + \frac{1}{2\e_n}\int_{\Gamma_D}s_n^2\, d\mathcal{H}^1 + \frac{|\log \e_n|}{8}\sum_{i = 1}^2c_i^2,
\]
and an application of \Cref{IBP} yields
\begin{align*}
\F_{\e}^{(2)}(w_n) = &\ \frac{1}{\sqrt{\e_n}} \left(\int_{\Gamma_D}\partial_{\nu}u^0_{\reg} s_n\, d\mathcal{H}^1 - \sum_{i = 1}^2 \frac{c_i}{2}\int_0^{\rho}\bar{\varphi}(r_i) r_i^{-1/2}\bar{s}^{(i)}_n(r_i,0)\, dr_i\right) \\
&\ \quad + \frac{1}{2}\int_{\Omega}|\nabla s_n|^2\,d\bm{x} + \frac{1}{2\e_n}\int_{\Gamma_D}s_n^2\, d\mathcal{H}^1 + \frac{|\log \e_n|}{8}\sum_{i = 1}^2c_i^2.
\end{align*}
Using the fact that $|\log \e_n| = \int_{\e_n}^1r^{-1}\,dr$, grouping together the different contributions on $\Gamma_D \cap B_{\e_n}(\bm{x}_i)$, $\Gamma_D \cap (B_{\rho}(\bm{x}_i) \setminus B_{\e_n}(\bm{x}_i))$ and $\Gamma_D \setminus B_{\rho}(\bm{x}_i)$, and completing the squares we obtain 
\begin{align*}
\F_{\e_n}^{(2)}(w_n) = &\ \sum_{i = 1}^2\Bigg\{\frac{1}{2}\int_{\e_n}^{\rho}\left(\frac{\bar{s}^{(i)}_n(r_i,0)}{\sqrt{\e_n}} + \overline{\partial_{\nu}u_{\reg}^0}^{(i)}(r_i,0) - \frac{c_i}{2}\bar{\varphi}(r_i)r_i^{-1/2}\right)^2\,dr_i + B_{i,n}c_i + C_{\varphi}c_i^2 \\
&\ \quad + \int_0^{\e_n}\left(\overline{\partial_{\nu}u_{\reg}^0}^{(i)}(r_i,0)\frac{\bar{s}^{(i)}_n(r_i,0)}{\sqrt{\e_n}} - \frac{c_i}{2}r_i^{-1/2}\frac{\bar{s}^{(i)}_n(r_i,0)}{\sqrt{\e_n}} + \frac{\bar{s}^{(i)}_n(r_i,0)^2}{2\e_n}\right)\,dr_i\Bigg\} \\
&\ \quad + \frac{1}{2}\int_{\Gamma_D \setminus \bigcup_iB_{\rho}(\bm{x}_i)}\left(\frac{s_n}{\sqrt{\e_n}} + \partial_{\nu}u_{\reg}^0\right)^2\,d\mathcal{H}^1 - \frac{1}{2}\int_{\Gamma_D \setminus \bigcup_iB_{\e_n}(\bm{x}_i)}\left(\partial_{\nu}u_{\reg}^0\right)^2\,d\mathcal{H}^1 \\
&\ \quad + \frac{1}{2}\int_{\Omega}|\nabla s_n|^2\,d\bm{x},
\end{align*}
where 
\begin{equation}
\label{bin}
B_{i,n} \coloneqq \frac{1}{2}\int_{\e_n}^{\rho}\bar{\varphi}(r_i)r_i^{-1/2}\overline{\partial_{\nu}u_{\reg}^0}^{(i)}(r_i,0)\,dr_i,
\end{equation}
and $C_{\varphi}$ is given as in (\ref{C_i}). Setting 
\begin{equation}
\label{znsn}
z_n \coloneqq s_n - \sqrt{\e_n}u_1,
\end{equation}
where $u_1$ is the solution to (\ref{mixedu1}), and using the fact that $u_1 = -\partial_{\nu}u_{\reg}^0$ on $\Gamma_D$ we can rewrite the previous expression as
\begin{align}
\label{Fe2final}
\F_{\e_n}^{(2)}(w_n) = &\ \sum_{i = 1}^2\Bigg\{\frac{1}{2}\int_{\e_n}^{\rho}\left(\frac{\bar{z}^{(i)}_n(r_i,0)}{\sqrt{\e_n}} - \frac{c_i}{2}\bar{\varphi}(r_i)r_i^{-1/2}\right)^2\,dr_i + B_{i,n}c_i + C_{\varphi}c_i^2 \notag \\
&\ \quad + \frac{1}{2}\int_0^{\e_n}\left(\frac{\bar{z}^{(i)}_n(r_i,0)^2}{\e_n} - c_ir_i^{-1/2}\frac{\bar{z}^{(i)}_n(r_i,0)}{\sqrt{\e_n}}\right)\,dr_i \Bigg\} + \frac{1}{2}\int_{\Gamma_D \setminus \bigcup_iB_{\rho}(\bm{x}_i)}\frac{z_n^2}{\e_n}\,d\mathcal{H}^1 \notag \\
&\ \quad - \frac{1}{2}\int_{\Gamma_D}\left(\partial_{\nu}u_{\reg}^0\right)^2\,d\mathcal{H}^1 + \frac{1}{2}\int_{\Omega}|\nabla(z_n + \sqrt{\e_n}u_1)|^2\,d\bm{x}.
\end{align}
Notice that all the terms in the previous expression are either positive or independent of $n$, with the only exception of $B_{i,n}c_i$, which converges to $B_ic_i$, and the fourth term on the right-hand side. However, by an application of \Cref{lemmacpt} we get
\[
- \int_0^{\e_n}c_i r_i^{-1/2} \frac{\bar{z}^{(i)}_n(r_i,0)}{\sqrt{\e_n}}\,dr_i \ge -|c_i|\kappa\left(\int_{B_{\e_n}^+(\bm{x}_i)}|\nabla z_n|^2\,d\bm{x}\right)^{1/2} -|c_i|\kappa\left(\int_0^{\e_n}\frac{\bar{z}_n^{(i)}(r_i,0)^2}{\e_n}\right)^{1/2},
\]
and thus (\ref{2cpt1}) and (\ref{2cpt2}) are proved at once.
\newline
\textbf{Step 2: }Let $W_{i,n}$ be as in (\ref{barw}). Then 
\begin{equation}
\label{winzn}
\bar{W}_{i,n}(r_i, \theta_i) = \bar{\varphi}(\e_n r_i)\bar{z}^{(i)}_n(\e_n r_i, \theta_i)
\end{equation} 
by (\ref{wnsn}) and (\ref{znsn}), and thus by a change of variables and the fact that $\bar{\varphi} \equiv 1$ in $[0,\rho/2]$, if $\e_n < \rho/2$,
\[
\int_0^1\left(\bar{W}_{i,n}(s,0)^2 - c_is^{-1/2}\bar{W}_{i,n}(s,0)\right)\,ds = \int_0^{\e_n}\left(\frac{\bar{z}^{(i)}_n(r_i,0)^2}{\e_n} - c_ir_i^{-1/2}\frac{\bar{z}^{(i)}_n(r_i,0)}{\sqrt{\e_n}}\right)\,dr_i.
\]
Similarly, for every $R > 1$ and for every $n$ such that $\e_nR < \rho/2$, we have
\begin{align*}
\int_1^R\left(\bar{W}_{i,n}(s,0) - \frac{c_i}{2}s^{-1/2}\right)^2\,ds = &\ \int_{\e_n}^{\e_nR}\left(\frac{\bar{z}^{(i)}_n(r_i,0)}{\sqrt{\e_n}} - \frac{c_i}{2}r_i^{-1/2}\right)^2\,dr_i, \\
\int_{B_R^+(\bm{0})}|\nabla W_{i,n}|^2\,d\bm{y} = &\ \int_{B_{\e_nR}^+(\bm{x}_i)}|\nabla z_n|^2\,d\bm{x}.
\end{align*}
Hence, in view of (\ref{Fe2final}) 
\begin{align}
\label{MFe2}
M \ge \F_{\e_n}^{(2)}(w_n) \ge &\ \sum_{i = 1}^2\Bigg\{\frac{1}{2}\int_0^1\left(\bar{W}_{i,n}(s,0)^2 - c_is^{-1/2}\bar{W}_{i,n}(s,0)\right)\,ds + B_{i,n}c_i + C_{\varphi}c_i^2 \notag \\
&\ \quad + \frac{1}{2}\int_1^R\left(\bar{W}_{i,n}(s,0) - \frac{c_i}{2}s^{-1/2}\right)^2\,ds + \frac{1}{2}\int_{B_R^+(\bm{0})}|\nabla W_{i,n}|^2\,d\bm{y} \Bigg\} \notag \\
&\ \quad - \frac{1}{2}\int_{\Gamma_D}\left(\partial_{\nu}u_{\reg}^0\right)^2\,d\mathcal{H}^1 + \sqrt{\e_n}\int_{\Omega}\nabla z_n \cdot \nabla u_1\,d\bm{x}.
\end{align}
Since $\{\nabla z_n\}_n$ is bounded in $L^2(\Omega;\RR^2_+)$ (see (\ref{2cpt1})), it follows that
\[
\int_{B_R^+(\bm{0})}|\nabla W_{i,n}|^2\,d\bm{y} + \int_0^1\left(\bar{W}_{i,n}(s,0)^2 - c_is^{-1/2}\bar{W}_{i,n}(s,0)\right)\,ds + \int_1^R\left(\bar{W}_{i,n}(s,0) - \frac{c_i}{2}s^{-1/2}\right)^2\,ds \le c,
\]
for some constant $c > 0$ independent of $n$ and $R$. To conclude, it is enough to send $R \to \infty$.
\end{proof}

\begin{proof}[Proof of \Cref{2gc}]
\textbf{Step 1: }Let $\e_n \to 0^+$ and $\{w_n\}_n$ be a sequence of functions in $L^2(\Omega)$ such that $w_n \to w$ in $L^2(\Omega)$. Reasoning as in the proof of \Cref{0gc}, we can assume without loss of generality that 
\[
\liminf_{n \to \infty} \F_{\e_n}^{(2)}(w_n) = \lim_{n \to \infty} \F_{\e_n}^{(2)}(w_n) < \infty.
\]
In particular, $\F_{\e_n}^{(2)}(w_n) < \infty$ for every $n$ sufficiently large. Let $\{w_{n_k}\}_k$ be the subsequence of $\{w_n\}_n$ given in \Cref{2gc-cpt} and for every $k \in \NN$ let $z_{n_k}$ be such that $w_{n_k} = u_0 + \sqrt{\e_{n_k}} z_{n_k} + \e_{n_k} u_1$. Let $W_{i,n}$ be given as in (\ref{winzn}), then by (\ref{Fe2final}), taking $n = n_k$ in (\ref{MFe2}) and letting $k \to 0$ we obtain
\begin{align*}
\liminf_{k \to \infty}\F_{\e_{n_k}}^{(2)}(w_{n_k}) \ge &\ \sum_{i = 1}^2\Bigg\{\frac{1}{2}\int_0^1\left(\bar{W}_i(s,0)^2 - c_is^{-1/2}\bar{W}_i(s,0)\right)\,ds + B_ic_i + C_{\varphi}c_i^2 \\
&\ \quad + \frac{1}{2}\int_1^R\left(\bar{W}_i(s,0) - \frac{c_i}{2}s^{-1/2}\right)^2\,ds + \frac{1}{2}\int_{B_R^+(\bm{0})}|\nabla W_i|^2\,d\bm{y} \Bigg\}\\
&\ \quad - \frac{1}{2}\int_{\Gamma_D}\left(\partial_{\nu}u_{\reg}^0\right)^2\,d\mathcal{H}^1, 
\end{align*}
where we have used (\ref{barw1}), (\ref{barw2}), (\ref{barw3}), and the fact that $\{\nabla z_n\}_n$ is bounded in $L^2(\Omega;\RR^2_+)$ (see (\ref{2cpt1})). By letting $R \to \infty$ in the previous inequality we get
\[
\liminf_{n \to \infty}\F_{\e_n}^{(2)}(w_n) = \lim_{k \to \infty}\F_{\e_{n_k}}^{(2)}(w_{n_k}) \ge \sum_{i = 1}^2\left\{\frac{\J_i(W_i)}{2} + B_ic_i + C_{\varphi}c_i^2\right\} - \frac{1}{2}\int_{\Gamma_D}\left(\partial_{\nu}u_{\reg}^0\right)^2\,d\mathcal{H}^1 \\
\ge \F_2(w),
\]
where in the last step we used the fact that $\J_i(W_i) \ge A_i$.
\newline
\textbf{Step 2: }For every $w \in L^2(\Omega) \setminus \{u_0\}$, the constant sequence $w_n = w$ is a recovery sequence. On the other hand, if $w = u_0$, let $w_i \in H$ be given as in \Cref{auxVP}. Let $z_n$ be the function defined in $B_{\rho}(\bm{x}_i) \cap \Omega$ using polar coordinates around $\bm{x}_i$ (see (\ref{polarxi})) via
\begin{equation}
\label{znpolar}
\bar{z}^{(i)}_n(r_i,\theta_i) \coloneqq \bar{\varphi}(r_i)\bar{W}_i\left(\frac{r_i}{\e_n},\theta_i\right)
\end{equation}
and $z_n(\bm{x}) \coloneqq 0$ in $\Omega \setminus \bigcup_{i = 1}^2B_{\rho}(\bm{x}_i)$. Set
\[
w_n \coloneqq u_0 + \sqrt{\e_n}z_n + \e_n u_1.
\]
We claim that $\{w_n\}_n$ is a recovery sequence for $u_0$. To prove the claim, we notice that (\ref{Fe2final}) implies
\begin{align}
\label{limsupFe2}
\limsup_{n \to \infty}\F_{\e_n}^{(2)}(w_n) \le &\ \sum_{i = 1}^2\Bigg\{\limsup_{n \to \infty} \frac{1}{2}\int_0^{\e_n}\left(\frac{\bar{W}_i(r_i/\e_n,0)^2}{\e_n} - c_ir_i^{-1/2}\frac{w\bar{W}_i(r_i/\e_n,0)}{\sqrt{\e_n}}\right)\,dr_i + B_ic_i + C_{\varphi}c_i^2 \notag \\
&\ \quad + \limsup_{n \to \infty} \frac{1}{2}\int_{\e_n}^{\rho}\varphi_i(r)^2\left(\frac{w_i(r/\e_n,0)}{\sqrt{\e_n}} - \frac{c_i}{2}r^{-1/2}\right)^2\,dr \Bigg\} \notag \\
&\ \quad - \frac{1}{2}\int_{\Gamma_D}\left(\partial_{\nu}u_{\reg}^0\right)^2\,d\mathcal{H}^1 + \limsup_{n \to \infty}\frac{1}{2}\int_{\Omega}|\nabla(z_n + \sqrt{\e_n} u_1)|\,d\bm{x}.
\end{align}
Letting $r = s\e_n$, we obtain
\begin{equation}
\label{VI}
\int_0^{\e_n}\left(\frac{\bar{W}_i(r_i/\e_n,0)^2}{\e_n} - c_i r_i^{-1/2}\frac{\bar{W}_i(r_i/\e_n,0)}{\sqrt{\e_n}}\right)\,dr_i = \int_0^1\left(\bar{W}_i(s,0)^2 - c_is^{-1/2}\bar{W}_i(s,0)\right)\,ds,
\end{equation}
and similarly
\begin{align}
\int_{\e_n}^{\rho}\varphi_i(r)^2\left(\frac{\bar{W}_i(r_i/\e_n,0)}{\sqrt{\e_n}} - \frac{c_i}{2}r_i^{-1/2}\right)^2\,dr = &\ \int_1^{\rho/\e_n}\varphi_i(s\e_n)^2\left(\bar{W}_i(s,0) - \frac{c_i}{2}s^{-1/2}\right)^2\,ds \notag \\
\le &\ \int_1^{\infty}\left(\bar{W}_i(s,0) - \frac{c_i}{2}s^{-1/2}\right)^2\,ds. \label{VII}
\end{align}
Next, we compute the contribution to the energy coming from the gradient term. Since $\bar{\varphi} = 0$ outside of $[0,\rho]$, by (\ref{znpolar}) we have
\begin{align*}
\int_{\Omega}|\nabla z_n|^2\,d\bm{x} = &\ \sum_{i = 1}^2\int_{B_{\rho}(\bm{x}_i)}|\nabla z_n|^2\,d\bm{x} \\
= &\ \sum_{i = 1}^2\int_0^{\pi}\int_0^{\rho}\left[r_i\left(\partial_{r_i}(\bar{\varphi}(r_i)\bar{W}_i(r_i/\e_n,\theta_i)\right)^2 + \frac{1}{r_i}\bar{\varphi}(r_i)^2\left(\partial_{\theta_i}\bar{W}_i(r_i/\e_n,\theta_i)\right)^2\right]\,dr_id\theta_i.
\end{align*}
We write
\[
\int_0^{\pi}\int_0^{\rho}r\left(\partial_r(\varphi_i(r)w_i(r/\e_n,\theta)\right)^2\,drd\theta = \int_0^{\pi}\int_0^{\rho}r\left(\varphi_i'(r)w_i(r/\e_n,\theta) +  \varphi_i(r)\e_n\partial_r w_i(r/\e_n,\theta)\right)^2\,drd\theta.
\]
Expanding the square on the right-hand side of the previous identity we obtain three terms, which we study separately. By the change of variables $s = r_i/\e_n$ we obtain 
\begin{align*}
\int_0^{\pi}\int_0^{\rho}r_i\bar{\varphi}'(r_i)^2\bar{W}_i(r_i/\e_n,\theta_i)^2\,dr_id\theta_i = &\ \int_0^{\pi}\int_0^{\rho/\e_n}s\e_n^2\varphi_i'(s\e_n)^2\bar{W}_i(s,\theta_i)^2\,drd\theta_i \\
\le &\ \frac{c}{\rho}\int_0^{\pi}\int_{\rho/2\e_n}^{\rho/\e_n}s\e_n^2\bar{W}_i(s,\theta)^2\,dsd\theta \to 0,
\end{align*}
where in the last step we have used \Cref{e_nL2}. Similarly,
\begin{align*}
\int_0^{\pi}\int_0^{\rho}r_i\bar{\varphi}(r_i)^2(\partial_{r_i}\bar{W}_i(r_i/\e_n,\theta_i))^2\,dr_id\theta_i = &\ \int_0^{\pi}\int_0^{\rho/\e_n}s\bar{\varphi}(s\e_n)^2(\partial_s\bar{W}_i(s,\theta_i))^2\,dsd\theta_i \\ 
\le &\ \int_0^{\pi}\int_0^{\rho/\e_n}s(\partial_s\bar{W}_i(s,\theta))^2\,dsd\theta.
\end{align*}
In turn, H\"older's inequality implies that
\[
2\int_0^{\pi}\int_0^{\rho}r_i\bar{\varphi}'(r_i)\bar{W}_i(r_i/\e_n,\theta_i)\bar{\varphi}(r_i)\partial_{r_i}\bar{W}_i(r_i/\e_n,\theta_i)\,dr_id\theta_i \to 0
\]
as $n \to \infty$. The same change of variables $s = r_i/\e_n$ also yields
\begin{align*}
\int_0^{\pi}\int_0^{\rho}\frac{\bar{\varphi}(r_i)}{r_i}\left(\partial_{\theta_i}\bar{W}_i(r_i/\e_n,\theta_i)\right)^2\,dr_id\theta_i = &\ \int_0^{\pi}\int_0^{\rho/\e_n}\frac{1}{s}\bar{\varphi}(s\e_n)^2(\partial_{\theta_i}\bar{W}_i(s,\theta_i))^2\,dsd\theta_i \\
\le &\ \int_0^{\pi}\int_0^{\rho/\e_n}\frac{1}{s}(\partial_{\theta_i}\bar{W}_i(s,\theta))^2\,dsd\theta.
\end{align*}
Thus 
\begin{equation}
\label{ineqgrad}
\limsup_{n \to \infty}\int_{\Omega}|\nabla(z_n + \sqrt{\e_n} u_1)|^2\,d\bm{x} \le \limsup_{n \to \infty}\int_{\Omega}|\nabla z_n|^2\,d\bm{x} \le \sum_{i = 1}^2\int_{\RR^2_+}|\nabla W_i|^2\,d\bm{x},
\end{equation}
which, together with (\ref{limsupFe2}), (\ref{VI}), and (\ref{VII}), concludes the proof of the $\Gamma$-limsup inequality.

The energy expansion (\ref{Fe2Tay}) follows from Theorem 1.2 in \cite{MR1202527}.
\end{proof}

\subsection{Sharp estimates}
\begin{proof}[Proof of \Cref{mixedest}]
Suppose by contradiction that (\ref{re-bdry}) is not true. Then there exists a sequence $\e_n \to 0^+$ such that 
\begin{equation}
\label{re-contr}
\|u_{\e_n} - u_0\|_{L^2(\Gamma_R)} > n \left(\e_n \sqrt{|\log \e_n|}\right)
\end{equation}
for every $n \in \NN$. In view of (\ref{Fe1Tay}), we have that 
\[
\sup\{\F_{\e_n}^{(1)}(u_{\e_n}) : n \in \NN\} < \infty,
\]
and thus by \Cref{1gc-cpt} there exist a subsequence $\{u_{\e_{n_k}}\}_k$ of $\{u_{\e_n}\}_n$ and $v_0 \in L^2(\Gamma_D)$ such that
\[
\frac{u_{\e_n} - u_0}{\e_n \sqrt{|\log \e_n|}} \rightharpoonup v_0,
\]
which is a contradiction to (\ref{re-contr}).

The proof of (\ref{re-grad}) follows analogously from (\ref{2cpt1}) and (\ref{Fe2Tay}).
\end{proof}

\section{More general Gamma-convergence results}
Our results can be recast in a more general framework by decoupling the different scales in the asymptotic expansion of $u_{\e}$. Here we present in full detail the generalizations of \Cref{1gc} and \Cref{2gc}; the results of Section 3 can be analogously reformulated. Throughout the section we assume that the domain $\Omega$ is given as in \Cref{H2dec} and use the notations introduced in (\ref{origin}) and (\ref{polarxi}).

\begin{thm}
Under the assumptions of \Cref{1gc-cpt}, let $\K_{\e}^{(1)} \colon L^2(\Omega) \times L^2(\Gamma_D) \to \overline{\RR}$ be defined via
\begin{equation}
\label{Ke1}
\K_{\e}^{(1)}(u,v) \coloneqq 
\left\{
\arraycolsep=1.4pt\def\arraystretch{1.6}
\begin{array}{ll}
\F_{\e}^{(1)}(u) & \text{ if } u \in H^1(\Omega) \text{ and } \frac{u - u_0}{\e \sqrt{|\log \e|}} = v \text{ on } \Gamma_D,\\
+ \infty & \text{ otherwise}.
\end{array}
\right.
\end{equation}
Then the family $\{\K_{\e}^{(1)}\}_{\e}$ $\Gamma$-converges in $L^2(\Omega) \times L^2(\Gamma_D)$ to the functional 
\[
\K_1(u,v) \coloneqq 
\left\{
\arraycolsep=1.4pt\def\arraystretch{1.6}
\begin{array}{ll}
\displaystyle \frac{1}{2}\int_{\Gamma_D}v^2\,d\mathcal{H}^1 - \frac{1}{8} \sum_{i = 1}^2c_i^2 & \text{ if } u = u_0 \text{ and } v \in L^2(\Gamma_D), \\
+ \infty & \text{ otherwise},
\end{array}
\right.
\]
where the coefficients $c_i$ are as in \Cref{H2dec}. 
\end{thm}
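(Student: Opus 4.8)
The plan is to run the scheme of \Cref{1gc} while \emph{keeping} the weak $L^2(\Gamma_D)$-limit $v$ of the rescaled boundary traces instead of discarding it. Here and below, convergence of the second component is understood in the weak $L^2(\Gamma_D)$-topology, in accordance with the compactness statement \Cref{1gc-cpt}; since finiteness of $\K_\e^{(1)}$ forces the second components to be bounded in $L^2(\Gamma_D)$, this topology is metrizable on the relevant sublevel sets. As in the proof of \Cref{1gc} I would set $z_n \coloneqq (u_n - u_0)/(\e_n\sqrt{|\log\e_n|})$, so that the admissibility constraint defining $\K_\e^{(1)}$ reads $v_n = z_n$ on $\Gamma_D$, and I would write $\bar z_n^{(i)}=\bar z_n^{(i)}(r_i,0)$ for the trace on the segment $\Gamma_D\cap B_\rho(\bm x_i)$.

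For the $\liminf$ inequality, let $(u_n,v_n)\to(u_0,v)$ with $\liminf_n \K_{\e_n}^{(1)}(u_n,v_n)<\infty$; passing to a subsequence the energy is finite, so $u_n\in H^1(\Omega)$, $v_n=z_n|_{\Gamma_D}\rightharpoonup v$ in $L^2(\Gamma_D)$, and by \Cref{convofmin} one has $u_n\to u_0$ in $H^1(\Omega)$ (if instead $u_n\to u\neq u_0$, then \Cref{1gc-cpt} forces the energy to blow up, so $\K_1=+\infty$ there). Starting from the identity (\ref{Fe1cpt}) I would drop the nonnegative gradient term, absorb the vanishing contribution $|\log\e_n|^{-1/2}\int_{\Gamma_D}\partial_\nu u^0_{\reg}z_n$, and split the boundary energy $\tfrac12\|z_n\|_{L^2(\Gamma_D)}^2$ into the arcs $\Gamma_D\cap B_\rho(\bm x_i)$ and the remainder $\Gamma_D\setminus\bigcup_i B_\rho(\bm x_i)$. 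Completing the square near each $\bm x_i$ exactly as in (\ref{gammalininf1}) gives, with $\bar\xi_n^{(i)}$ as in (\ref{xiin}),
\[
\tfrac{1}{2}\!\int_0^\rho \!\big(\bar z_n^{(i)}\big)^2 dr_i - \!\int_{\e_n}^\rho\! \bar\xi_n^{(i)}\bar z_n^{(i)}\,dr_i \ge \tfrac{1}{2}\!\int_{\e_n}^\rho\! \big(\bar z_n^{(i)} - \bar\xi_n^{(i)}\big)^2 dr_i - \tfrac{1}{2}\!\int_{\e_n}^\rho\! \big(\bar\xi_n^{(i)}\big)^2 dr_i,
\]
while the contribution of the cross term on $(0,\e_n)$ is controlled via \Cref{lemmacpt} (as in (\ref{RN1cpt})) and vanishes after division by $\sqrt{|\log\e_n|}$, since $\e_n\int_\Omega|\nabla z_n|^2$ and $\|z_n\|_{L^2(\Gamma_D)}$ are bounded by \Cref{1gc-cpt}.

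The decisive new point — and the main obstacle — is that, instead of simply discarding the completed square as in \Cref{1gc}, I would retain it through weak lower semicontinuity. One checks that $\bar\xi_n^{(i)}\mathbf{1}_{[\e_n,\rho]}\rightharpoonup 0$ weakly in $L^2(0,\rho)$ (note $\|\bar\xi_n^{(i)}\|_{L^2}$ does \emph{not} vanish, yet $|\log\e_n|^{-1/2}\int_{\e_n}^\rho\bar\varphi\,r^{-1/2}\psi\to0$ for every fixed $\psi\in L^2$), while $\bar z_n^{(i)}\mathbf{1}_{[\e_n,\rho]}\rightharpoonup\bar v^{(i)}$; hence $\bar z_n^{(i)}\mathbf{1}_{[\e_n,\rho]}-\bar\xi_n^{(i)}\mathbf{1}_{[\e_n,\rho]}\rightharpoonup\bar v^{(i)}$, so that $\liminf_n\tfrac12\int_{\e_n}^\rho(\bar z_n^{(i)}-\bar\xi_n^{(i)})^2\ge\tfrac12\int_0^\rho(\bar v^{(i)})^2$. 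Combining this with $\tfrac12\int_{\e_n}^\rho(\bar\xi_n^{(i)})^2\to\tfrac18 c_i^2$ and with the weakly lower semicontinuous remainder $\tfrac12\int_{\Gamma_D\setminus\bigcup_iB_\rho(\bm x_i)}z_n^2$ yields precisely $\liminf_n\F_{\e_n}^{(1)}(u_n)\ge\tfrac12\|v\|_{L^2(\Gamma_D)}^2-\tfrac18\sum_i c_i^2$.

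For the $\limsup$ inequality, the constant sequence handles every pair with $u\neq u_0$. For $u=u_0$ and $v\in H^{1/2}(\Gamma_D)$ I would \emph{superimpose the two scales}: take $z_n\coloneqq \zeta_n+\tilde v$, where $\zeta_n$ is the concentrating profile (\ref{zetain})--(\ref{recFe1}) built in \Cref{1gc} and $\tilde v\in H^1(\Omega)$ is a fixed lift of $v$. Then $z_n|_{\Gamma_D}=\zeta_n|_{\Gamma_D}+v\rightharpoonup v$, and expanding (\ref{Fe1cpt}) the energy decouples into the $\zeta_n$-contribution, which tends to $-\tfrac18\sum_i c_i^2$ as in (\ref{limsupFen1}), the $\tilde v$-contribution $\tfrac12\|v\|_{L^2(\Gamma_D)}^2$, and mixed terms $\int_{\Gamma_D}\zeta_n v$ and $\int\bar\xi_n^{(i)}\bar v^{(i)}$ that vanish by the same weak--strong pairing (together with $\e_n\int_\Omega|\nabla\zeta_n|^2\to0$ from (\ref{egradto0})). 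This gives $\limsup_n\F_{\e_n}^{(1)}(u_n)\le\tfrac12\|v\|_{L^2(\Gamma_D)}^2-\tfrac18\sum_i c_i^2$, and a diagonal argument extends the construction from $H^{1/2}(\Gamma_D)$ to arbitrary $v\in L^2(\Gamma_D)$, completing the proof.
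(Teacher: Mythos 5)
Your proposal is correct and follows essentially the same route as the paper's own proof: your liminf argument (complete the square with $\bar{\xi}_n^{(i)}$, use $v_n \rightharpoonup v$ and $\xi_n^{(i)} \rightharpoonup 0$ weakly in $L^2(\Gamma_D)$ together with $\tfrac{1}{2}\int(\bar{\xi}_n^{(i)})^2\,dr_i \to \tfrac{1}{8}c_i^2$ and weak lower semicontinuity of the completed square) is exactly the paper's Step 2, and your recovery construction $u_n = u_0 + \e_n\sqrt{|\log\e_n|}(z_n + v^*)$ with an $H^1$ lift $v^*$ of $v$, followed by a mollification/diagonal argument for $v \in L^2(\Gamma_D)\setminus H^{1/2}(\partial\Omega)$, is the paper's Step 3. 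Your explicit observation that the second component can only converge weakly in $L^2(\Gamma_D)$ (since the trace of the concentrating profile does not vanish strongly) correctly identifies the topology implicit in the paper's compactness step.
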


\begin{proof}
\textbf{Step 1: }(\emph{Compactness}) Let $\e_n \to 0^+$ and $(u_n,v_n) \in L^2(\Omega) \times L^2(\Gamma_D)$ such that 
\[
\sup \{\K_{\e_n}^{(1)}(u_n,v_n) : n \in \NN\} < \infty.
\]
Then by (\ref{Ke1}), $u_n \in H^1(\Omega)$, the function
\[
v_n^* \coloneqq \frac{u_n - u_0}{\e_n\sqrt{|\log \e_n|}} 
\]
belongs to $H^1(\Omega)$ and satisfies $v_n^* = v_n$ on $\Gamma_D$ in the sense of traces. By \Cref{1gc-cpt}, there exist a subsequence $\{u_{n_k}\}_k$ of $\{u_n\}_n$, $r \in H^1(\Omega)$ and $v \in L^2(\Gamma_D)$ such that
\begin{align*}
\e_{n_k}^{1/2}\nabla v_{n_k}^* \rightharpoonup &\ r \quad \text{ in } H^1(\Omega), \\
v_{n_k} \rightharpoonup &\ v \quad \text{ in } L^2(\Gamma_D).
\end{align*}
\textbf{Step 2: }(\emph{Liminf inequality}) Let $\e_n \to 0^+$ and $\{(u_n,v_n)\}_n$ be a sequence in $L^2(\Omega) \times L^2(\Gamma_D)$ such that $(u_n,v_n) \to (u,v)$. Reasoning as in the proof of \Cref{0gc}, we can assume without loss of generality that 
\[
\liminf_{n \to \infty} \K_{\e_n}^{(1)}(u_n,v_n) = \lim_{n \to \infty} \K_{\e_n}^{(1)}(u_n,v_n) < \infty.
\]
In particular, $\K_{\e_n}^{(1)}(u_n,v_n) < \infty$ for every $n$ sufficiently large. Let $\{u_{n_k}\}_k$ be the subsequence of $\{u_n\}_n$ given as in the previous step and $\xi^i_n$ be the function defined in polar coordinates as in (\ref{xiin}). Then
\[
\liminf_{k \to \infty}\K_{\e_{n_k}}^{(1)}(u_{n_k},v_{n_k}) = \liminf_{k \to \infty}\F_{\e_{n_k}}^{(1)}(u_{n_k})
\]
and so, reasoning as in the proof of \Cref{1gc} (by (\ref{Fenk1}) and (\ref{gammalininf1}) with $v_{n_k}$ and $z_{n_k}$ replaced by $u_{n_k}$ and $v_{n_k}^*$, respectively), we obtain
\begin{align*}
\liminf_{k \to \infty}\K_{\e_{n_k}}^{(1)}(u_{n_k},v_{n_k}) \ge &\ \liminf_{k \to \infty} \left\{ \frac{1}{2}\int_{\Gamma_D}v_{n_k}^2\,d\mathcal{H}^1 - \int_{\Gamma_D \setminus \bigcup_iB_{\e_{n_k}}(\bm{x}_i)}v_{n_k}(\xi_{n_k}^1 + \xi_{n_k}^2)\,d\mathcal{H}^1\right\} \\
\ge &\ \liminf_{k \to \infty}\int_{\Gamma_D \setminus \bigcup_iB_{\e_{n_k}}(\bm{x}_i)}\left[\frac{1}{2}v_{n_k}^2 - v_{n_k}(\xi_{n_k}^1 + \xi_{n_k}^2)\right]\,d\mathcal{H}^1 \\
= &\ \liminf_{k \to \infty}\frac{1}{2}\int_{\Gamma_D \setminus \bigcup_iB_{\e_{n_k}}(\bm{x}_i)}\left[\left(v_{n_k} - \xi_{n_k}^1 - \xi_{n_k}^2\right)^2 - (\xi_n^1)^2 - (\xi_{n_k}^2)^2\right] \,d\mathcal{H}^1 \\
\ge &\ \frac{1}{2}\int_{\Gamma_D}v^2\,d\mathcal{H}^1 - \frac{1}{8} \sum_{i = 1}^2c_i^2 = \K_1(u_0,v),
\end{align*}
where in the last step we have used the fact that $v_{n_k} \rightharpoonup v$, $\xi_{n_k}^i \rightharpoonup 0$ in $L^2(\Gamma_D)$, and so
\[
\liminf_{k \to \infty}\int_{\Gamma_D \setminus \bigcup_iB_{\e_{n_k}}(\bm{x}_i)}\left(v_{n_k} - \xi_{n_k}^1 - \xi_n^2\right)^2\,d\mathcal{H}^1 \ge \int_{\Gamma_D}v^2 \,d\mathcal{H}^1.
\]
\textbf{Step 3: }(\emph{Limsup inequality}) Let $u = u_0$ and $v \in L^2(\Gamma_D)$. We extend $v$ to zero in $\partial \Omega \setminus \Gamma_D$ and assume first that $v \in H^{1/2}(\partial \Omega)$ (in what follows, although with a slight abuse of notation, we identify $v$ with its extension). Then there exists $v^* \in H^1(\Omega)$ such that $v^* = v$ on $\partial \Omega$ in the sense of traces (see Theorem 18.40 in \cite{leoni}). Set 
\[
u_n \coloneqq u_0 + \e_n\sqrt{|\log \e_n|}(z_n + v^*),
\]
where $z_n$ is defined as in (\ref{recFe1}). As one can check (see (\ref{Fen1vnsup}) and (\ref{limsupFen1})), $\{(u_n,z_n + v^*)\}_n$ is a recovery sequence for $(u_0,v)$. 

If $v \in L^2(\partial \Omega) \setminus H^{1/2}(\partial \Omega)$ we consider a sequence $\{v_n\}_n$ of functions in $H^{1/2}(\partial \Omega)$ such that
\begin{equation}
\label{molli1}
\|v_n - v\|_{L^2(\partial \Omega)} \to 0 \quad \text{ as } n \to \infty, 
\end{equation}
and for every $n \in \NN$ we let $v_n^* \in H^1(\Omega)$ be such that $v_n^* = v_n$ on $\partial \Omega$ and 
\begin{equation}
\label{h1h12}
\|v_n^*\|_{H^1(\Omega)} \le c \|v_n\|_{H^{1/2}(\partial \Omega)},
\end{equation}
where $c > 0$ is independent of $n$ (see Theorem 18.40 in \cite{leoni}). Furthermore, notice that by a standard mollification argument we can also assume that 
\begin{equation}
\label{molli}
\|\e_n^{1/2} v_n\|_{H^{1/2}(\partial \Omega)} \to 0 \quad \text{ as } n \to \infty.
\end{equation}
Set
\[
u_n \coloneqq u_0 + \e_n\sqrt{|\log \e_n|}(z_n + v_n^*)
\]
and notice that by (\ref{h1h12}) and (\ref{molli}), $\|\e_n^{1/2} \nabla (z_n + v_n^*)\|_{L^2(\Omega;\RR^2)} \to 0$ as $n \to \infty$. Thus, we can proceed as in (\ref{Fen1vnsup}) and (\ref{limsupFen1}).
\end{proof}

\begin{thm}
Under the assumptions of \Cref{2gc-cpt}, let 
\[
\K_{\e}^{(2)} \colon L^2(\Omega) \times L^2_{\loc}(\RR^2_+) \times L^2_{\loc}(\RR^2_+) \times L^2_{\loc}(\Gamma_D) \to \overline{\RR}
\]
be defined via 
\begin{equation}
\label{Ke2}
\K_{\e}^{(2)}(u,v_1,v_2,w) \coloneqq \F_{\e}^{(2)}(u)
\end{equation}
if 
\begin{equation}
\label{Vw}
\left\{
\arraycolsep=1.4pt\def\arraystretch{1.6}
\begin{array}{ll}
\displaystyle u - u_0 - \e u_1 = \sqrt{\e}V_{i,\e} & \text{ in } \Omega \cap B_{\rho}(\bm{x}_i),\\
u - u_0 - \e u_1 = \e w & \text{ on } \Gamma_D \setminus B_{\e}(\bm{x}_i),
\end{array}
\right.
\end{equation}
where the functions $V_{i,\e}$ are defined in polar coordinates by 
\begin{equation}
\label{vie}
\bar{V}_{i,\e}(r_i, \theta_i) \coloneqq \bar{v}_i\left(\frac{r_i}{\e}, \theta_i\right),
\end{equation}
and $\K_{\e}^{(2)}(u,v_1,v_2,w) \coloneqq + \infty$ otherwise. Then the family $\{\K_{\e}^{(2)}\}_{\e}$ $\Gamma$-converges in $L^2(\Omega) \times L^2_{\loc}(\RR^2_+) \times L^2_{\loc}(\RR^2_+) \times L^2_{\loc}(\Gamma_D)$ to the functional 
\[
\K_2(u,v_1,v_2,w) \coloneqq 
\sum_{i = 1}^2\left[\frac{1}{2}\J_i(v_i) + B_ic_i + C_{\varphi}c_i^2\right] + \frac{1}{2}\int_{\Gamma_D}\left[\left(w - \sum_{i = 1}^2c_i\psi_i\right)^2 - \left(\partial_{\nu}u_{\reg}^0 \right)^2\right]\,d\mathcal{H}^1
\]
if $u = u_0$, $v_1, v_2 \in H$, $w - \sum_{i = 1}^2c_i\psi_i \in L^2(\Gamma_D)$, and $\K_2(u,v_1,v_2,w) \coloneqq +\infty$ otherwise,
where $B_i$ and $C_{\varphi}$ are defined as in $(\ref{B_i})$ and $(\ref{C_i})$, respectively.
\end{thm}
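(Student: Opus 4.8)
The plan is to run the same three-step scheme---compactness, liminf inequality, limsup inequality---used for the $\K^{(1)}_\e$ analogue above, delegating all of the genuinely hard analysis to \Cref{2gc-cpt} and to the exact decomposition (\ref{Fe2final}) obtained in the proof of \Cref{2gc}. The conceptual point is that the constraint (\ref{Vw}) identifies the microscopic profiles $v_i$ (through (\ref{vie})) with the blow-ups $W_{i,n}$ of (\ref{barw}), and the macroscopic datum $w$ with the renormalized trace appearing in (\ref{2cpt2}); since these objects live on the separated scales $r_i\sim\e$ and $r_i\sim1$, in the limit they decouple into the two independent pieces $\tfrac12\J_i(v_i)$ and $\tfrac12\int_{\Gamma_D}(w-\sum_ic_i\psi_i)^2$.

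\textbf{Step 1 (compactness).} If $\sup_n\K^{(2)}_{\e_n}(u_n,v_{1,n},v_{2,n},w_n)<\infty$, then finiteness forces $u_n\in H^1(\Omega)$ together with the constraints (\ref{Vw}). Setting $u_n-u_0-\e_nu_1=\sqrt{\e_n}z_n$ as in the proof of \Cref{2gc-cpt}, the first line of (\ref{Vw}) and (\ref{vie}) give $\bar{z}^{(i)}_n(r_i,\theta_i)=\bar{v}_{i,n}(r_i/\e_n,\theta_i)$, whence by (\ref{winzn}) one has $\bar{W}_{i,n}=\bar{\varphi}(\e_n\,\cdot)\,\bar{v}_{i,n}$ and therefore $W_{i,n}=v_{i,n}$ on each fixed $B_R^+(\bm{0})$ for $n$ large; the second line of (\ref{Vw}) identifies $w_n$ on $\Gamma_D\setminus B_{\e_n}(\bm{x}_i)$ with the renormalized trace in (\ref{2cpt2}). \Cref{2gc-cpt} then produces a subsequence along which $u_n\to u_0$ in $H^1(\Omega)$, $v_{i,n}\rightharpoonup v_i:=W_i\in H$ with $\J_i(v_i)<\infty$ in the sense of (\ref{barw1})--(\ref{barw3}), and $w_n\rightharpoonup w$ with $w-\sum_ic_i\psi_i\in L^2(\Gamma_D)$, which is the desired compactness in the product topology.

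\textbf{Step 2 (liminf).} I start from the identity (\ref{Fe2final}); the only difference with \Cref{2gc} is that the macroscopic boundary energy is now kept rather than discarded. The crucial term is the intermediate trace integral $\tfrac12\int_{\e_n}^{\rho}\big(\bar{z}^{(i)}_n(r_i,0)/\sqrt{\e_n}-\tfrac{c_i}{2}\bar{\varphi}(r_i)r_i^{-1/2}\big)^2\,dr_i$, which equals $\tfrac12\int_{\e_n}^{\rho}(w_n-c_i\psi_i)^2\,dr_i$. For fixed $R$ and $\delta$ with $\e_nR<\delta<\rho$ I split it over the disjoint intervals $(\e_n,\e_nR)$, $(\e_nR,\delta)$, $(\delta,\rho)$: after $s=r_i/\e_n$ the first piece becomes $\tfrac12\int_1^R(\bar{v}_{i,n}(s,0)-\tfrac{c_i}{2}s^{-1/2})^2\,ds$ and is lower semicontinuous by (\ref{barw3}), the third is lower semicontinuous under $w_n\rightharpoonup w$, and the middle one is nonnegative and is discarded. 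Letting $R\to\infty$, $\delta\to0$ recovers simultaneously the tail $\int_1^\infty(v_i-\tfrac{c_i}{2}s^{-1/2})^2\,ds$ of $\J_i(v_i)$ and the full $\int_{\Gamma_D\cap B_\rho(\bm{x}_i)}(w-c_i\psi_i)^2$; no double counting occurs because the two contributions come from disjoint intervals. Adding the microscopic term via (\ref{barw2}), the gradient term via (\ref{barw1}) and Fatou, the away-from-corner term $\tfrac12\int_{\Gamma_D\setminus\bigcup_iB_\rho(\bm{x}_i)}w_n^2$ (on which every $\psi_i$ vanishes), together with $B_{i,n}\to B_i$ and the unchanged constants, gives $\liminf_k\K^{(2)}_{\e_{n_k}}\ge\K_2(u_0,v_1,v_2,w)$.

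\textbf{Step 3 (limsup) and the main obstacle.} I expect the hard part to be the recovery sequence, because of the coupling between the two scales on the matching annulus $r_i\sim\e_n$. The naive superposition $w_n=\e_n^{-1/2}v_i(\cdot/\e_n)+(w-\sum_ic_i\psi_i)$ fails, since the resulting cross term in $\int_{\e_n}^\rho(w_n-c_i\psi_i)^2$ is only $O(1)$, not $o(1)$. The remedy is a density-and-diagonalization reduction to data for which $v_i(\cdot,0)=\tfrac{c_i}{2}x^{-1/2}$ for $x$ beyond some fixed $R_0$ and $w-\sum_ic_i\psi_i$ vanishes near each $\bm{x}_i$; such data are dense because the integrals defining $\J_i$ and the condition $w-\sum_ic_i\psi_i\in L^2(\Gamma_D)$ are continuous under these truncations, and $\K_2$ is lower semicontinuous along them. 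For reduced data I take $\bar{z}^{(i)}_n(r_i,\theta_i):=\bar{\varphi}(r_i)\bar{v}_i(r_i/\e_n,\theta_i)$ near each corner (as in (\ref{znpolar}), with $W_i$ replaced by $v_i$) plus a macroscopic correction $\sqrt{\e_n}\,\tilde{w}$, where $\tilde{w}\in H^1(\Omega)$ extends $w-\sum_ic_i\psi_i$ and vanishes near the corners, and set $w_n:=u_0+\sqrt{\e_n}z_n+\e_nu_1$. On the overlap annulus both profiles equal $c_i\psi_i$, so the cross term vanishes identically and $\int_{\e_n}^\rho(w_n-c_i\psi_i)^2$ splits cleanly into the $\J_i$-tail and the macroscopic contribution; the gradient term is handled exactly as in (\ref{ineqgrad}) via \Cref{e_nL2}, the macroscopic gradient and its cross term being $O(\sqrt{\e_n})$. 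Checking that this reduction is dense while keeping $\K_2$ continuous is the technical heart of the argument.
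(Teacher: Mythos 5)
Your compactness and liminf steps are essentially the paper's: the paper likewise feeds the constraint (\ref{Vw})--(\ref{vie}) into the decomposition (\ref{Fe2final}), obtaining (\ref{Ke2final}) with the intermediate trace integral cut at a fixed $\delta$, applies lower semicontinuity to each piece via \Cref{2gc-cpt} exactly as you do (your extra cut at $\e_n R$ is a cosmetic variant of the paper's ``argue as in the first step of the proof of \Cref{2gc}''), and lets $\delta \to 0^+$ at the end. Where you genuinely diverge is the limsup. The paper performs \emph{no} density reduction in $v_i$: it blends the scales with a cutoff at a fixed macroscopic radius,
\[
\bar{Z}_n^{(i)}(r_i,\theta_i) = \bar{\varphi}\left(\tfrac{\rho}{2\delta}r_i\right)\bar{v}_i\left(\tfrac{r_i}{\e_n},\theta_i\right) + \sqrt{\e_n}\left(1 - \bar{\varphi}\left(\tfrac{\rho}{2\delta}r_i\right)\right)\overline{w^*}^{(i)}(r_i,\theta_i),
\]
bounds the transition annulus $(\delta,2\delta)$ by convexity of the square, and then uses the key observation that the microscopic contribution there, which after rescaling is $\int_{\delta/\e_n}^{2\delta/\e_n}\bigl(\bar{v}_i(s,0) - \tfrac{c_i}{2}s^{-1/2}\bigr)^2\,ds$, vanishes \emph{automatically} because the tail integral in $\J_i(v_i) < \infty$ is finite and $\delta/\e_n \to \infty$. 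For the same reason your stated obstacle to naive superposition is overstated: splitting the cross term at a fixed $\delta$, the part on $(\delta,\rho)$ vanishes because the rescaled tail concentrates at the corner, while the part on $(\e_n,\delta)$ is bounded by $C\|w - \sum_i c_i\psi_i\|_{L^2(0,\delta)}$, which is small as $\delta \to 0$; so the cross term is in fact $o(1)$, and the exact far-field matching you engineer is not needed. Your density-and-diagonalization route can nonetheless be completed: the truncation $v_i \mapsto \chi_{R_0}v_i + (1-\chi_{R_0})v^{\infty}$, with $v^{\infty}$ the explicit comparison function from the proof of \Cref{auxVP}, preserves $\J_i$ in the limit precisely thanks to the extension of \Cref{e_nL2} to all finite-energy elements of $H$ (which the paper itself invokes for the gradient term), and $L^2_{\loc}$ is metrizable so diagonalization is legitimate. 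What it buys you is a cleaner, identically vanishing cross term; what it costs is the density verification you yourself flag as the technical heart, which the paper's one-shot construction avoids entirely.

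Two caveats you should make explicit. First, writing ``$\tilde{w} \in H^1(\Omega)$ extends $w - \sum_i c_i\psi_i$'' tacitly assumes $w - \sum_i c_i\psi_i$ has $H^{1/2}$ regularity on the relevant portion of $\Gamma_D$; generic $L^2$ boundary data admit no $H^1$ extension, so your dense class must also incorporate the regularization the paper performs via (\ref{addass}) and the mollification argument (\ref{molli1})--(\ref{molli}), on top of your truncations. Second, your recovery sequence must satisfy the constraint (\ref{Vw}) \emph{exactly} to have finite $\K^{(2)}_{\e_n}$ energy: you should define $v_{i,n}$ as the rescaling of your $z_n$ (which converges to $v_i$ in $L^2_{\loc}(\RR^2_+)$ since the macroscopic correction vanishes near the corners under rescaling), rather than taking $v_{i,n} = v_i$ itself; this is how the paper's own $Z_n$ is to be read. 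Neither point invalidates your scheme, but both are needed for it to compile into a complete proof.
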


\begin{proof} \textbf{Step 1: }(\emph{Liminf inequality}) Let $\e_n \to 0^+$ and $\{(u_n,v_{1,n},v_{2,n}, w_n)\}_n$ be a sequence in $L^2(\Omega) \times L^2_{\loc}(\RR^2_+) \times L^2_{\loc}(\RR^2_+) \times L^2_{\loc}(\Gamma_D)$ such that $(u_n,v_{1,n},v_{2,n}, w_n) \to (u,v_1,v_2,w)$. Let $\bm{u}_n \coloneqq (u_n,v_{1,n},v_{2,n}, w_n)$. Reasoning as in the proof of \Cref{0gc}, we can assume without loss of generality that 
\[
\liminf_{n \to \infty} \K_{\e_n}^{(2)}(\bm{u}_n) = \lim_{n \to \infty} \K_{\e_n}^{(2)}(\bm{u}_n) < \infty.
\]
In particular, $\K_{\e_n}^{(2)}(\bm{u}_n) < \infty$ for every $n$ sufficiently large. Let $\{u_{n_k}\}_k$ be the subsequence of $\{u_n\}_n$ given as in \Cref{2gc-cpt}. By (\ref{Fe2final}) (with $w_n$ replaced by $u_{n_k}$), (\ref{Ke2}), (\ref{Vw}), and (\ref{vie}) it follows that for every $\e_{n_k} < \delta < \rho$, 
\begin{align}
\label{Ke2final}
\K_{\e_{n_k}}^{(2)}(\bm{u}_{n_k}) = &\ \sum_{i = 1}^2\Bigg\{\frac{1}{2}\int_{\e_{n_k}}^{\delta}\left(\frac{\bar{v}_{i,n_k}(r_i/\e_{{n_k}},0)}{\sqrt{\e_{n_k}}} - \frac{c_i}{2}\bar{\varphi}(r_i)r_i^{-1/2}\right)^2\,dr_i + B_{i,n_k}c_i + C_{\varphi}c_i^2 \notag \\
&\ \quad + \frac{1}{2}\int_0^{\e_{n_k}}\left(\frac{\bar{v}_{i,n_k}(r_i/\e_{n_k},0)^2}{\e_{n_k}} - c_ir_i^{-1/2}\frac{\bar{v}_{i,n_k}(r_i/\e_{n_k},0)}{\sqrt{\e_{n_k}}}\right)\,dr_i \Bigg\} \notag \\
&\ \quad + \frac{1}{2}\int_{\Gamma_D \setminus \bigcup_iB_{\delta}(\bm{x}_i)}\left(w_{n_k} - \sum_{i = 1}^2c_i\psi_i\right)^2\,d\mathcal{H}^1 - \frac{1}{2}\int_{\Gamma_D}\left(\partial_{\nu}u_{\reg}^0\right)^2\,d\mathcal{H}^1 \notag \\
&\ \quad + \frac{1}{2\e_{n_k}}\int_{\Omega}|\nabla(u_{n_k} - u_0)|^2\,d\bm{x},
\end{align}
where $B_{i,n_k}$ is defined as in (\ref{bin}). Arguing as in the first step of the proof of \Cref{2gc}, we arrive at
\begin{align*}
\liminf_{k \to \infty}\K_{\e_{n_k}}^{(2)}(\bm{u}_{n_k}) \ge &\ \sum_{i = 1}^2\left[\frac{1}{2}\J_i(v_i) + B_ic_i + C_{\varphi}c_i^2\right] + \frac{1}{2}\int_{\Gamma_D \setminus \bigcup_iB_{\delta}(\bm{x}_i)}\left(w - \sum_{i = 1}^2c_i\psi_i\right)^2\,d\mathcal{H}^1 \\
&\ \quad - \frac{1}{2}\int_{\Gamma_D}\left(\partial_{\nu}u_{\reg}^0 \right)^2\,d\mathcal{H}^1.
\end{align*}
To conclude the proof of the liminf inequality it is enough to let $\delta \to 0^+$.
\newline
\textbf{Step 2: }(\emph{Limsup inequality}) Let $(u_0,v_1,v_2,w)$ be such that $\K_2(u_0,v_1,v_2,w) < \infty$. We assume first that there exists $0 < \delta < \rho/2$ such that 
\begin{equation}
\label{addass}
w \in H^{1/2}\left(\Gamma_D \setminus \bigcup_{i = 1}^2\overline{B_{\delta/4}(\bm{x}_i)}\right),
\end{equation}
and we extend it to a function in $H^{1/2}(\partial \Omega)$ (in what follows, although with a slight abuse of notation, we identify $w$ with its extension). Then there exists $w^* \in H^1(\Omega)$ such that $w^* = w$ on $\partial \Omega$ in the sense of traces (see Theorem 18.40 in \cite{leoni}). Set 
\[
u_n \coloneqq u_0 + \e_n u_1 + \sqrt{\e_n}Z_n, 
\]
where $Z_n$ is given in polar coordinate at $\bm{x}_i$ by
\[
\bar{Z}_n^{(i)}(r_i,\theta_i) \coloneqq \bar{\varphi}\left(\frac{\rho}{2\delta}r_i\right)\bar{v}_i\left(\frac{r_i}{\e_n}, \theta_i\right) + \sqrt{\e_n}\left(1 - \bar{\varphi}\left(\frac{\rho}{2\delta}r_i\right)\right)\overline{w^*}^{(i)}(r_i,\theta_i),
\]
and $Z_n \coloneqq \sqrt{\e_n}w^*$ in $\Omega \setminus \bigcup_{i = 1}^2B_{\rho}(\bm{x}_i)$.
We claim that $\{\bm{u}_n\}_n$, defined from $\{u_n\}_n$ via (\ref{Vw}) and (\ref{vie}), is a recovery sequence for $(u_0,v_1,v_2,w)$. Using the fact that $\bar{\varphi}\left(\frac{\rho}{2\delta}r_i\right) = 1$ for $r_i \le \delta$ and the change of variables $\e_n s = r_i$ (see also (\ref{VI}), (\ref{VII}), and (\ref{ineqgrad})), we get 
\begin{align*}
\J_i(v_i) \ge &\ \limsup_{n \to \infty}\Biggr\{\int_{B_{\delta}(\bm{x}_i)}|\nabla Z_n|^2\,d\bm{x} + \int_0^{\e_n}\left(\frac{\bar{Z}_n^{(i)}(r_i,0)^2}{\e_n} - c_ir_i^{-1/2}\frac{\bar{Z}_n^{(i)}(r_i,0)}{\sqrt{\e_n}}\right)\,dr_i \\
&\ \quad + \int_{\e_n}^{\delta}\left(\frac{\bar{Z}_n^{(i)}(r_i,0)}{\sqrt{\e_n}} - \frac{c_i}{2}\bar{\varphi}(r_i)r_i^{-1/2}\right)^2\,dr_i\Biggr\}.
\end{align*}
In turn, it follows from (\ref{Ke2final}) that
\begin{align}
\limsup_{n \to \infty}\K_{\e}^{(2)}(\bm{u}_n) \le &\ \sum_{i = 1}^2\left\{\frac{\J_i(v_i)}{2} + B_ic_i + C_{\varphi}c_i^2\right\} + \limsup_{n \to \infty}\frac{1}{2}\int_{\Gamma_D \setminus \bigcup_iB_{\delta}(\bm{x}_i)}\left(\frac{Z_n}{\sqrt{\e_n}} - \sum_{i = 1}^2c_i\psi_i\right)^2\,d\mathcal{H}^1 \notag \\
&\ \quad  - \frac{1}{2}\int_{\Gamma_D}\left(\partial_{\nu}u_{\reg}^0\right)^2\,d\mathcal{H}^1 + \limsup_{n \to \infty}\frac{1}{2}\int_{\Omega \setminus \bigcup_i B_{\delta}(\bm{x}_i)}|\nabla(Z_n + \sqrt{\e_n} u_1)|^2\,d\bm{x}. \label{Ke2sup}
\end{align}
By the convexity of the square function we have
\begin{align*}
\int_{\delta}^{2\delta}\left(\frac{\bar{Z}^{(i)}_n(r_i,0)}{\sqrt{\e_n}} - \frac{c_i}{2}\bar{\varphi}(r_i)r_i^{-1/2}\right)^2\,dr_i \le &\ \int_{\delta}^{2\delta}\bar{\varphi}\left(\frac{\rho}{2\delta}r_i\right)\left(\bar{v}_i(r_i/\e_n,0) - \frac{c_i}{2}\bar{\varphi}(r_i)r_i^{-1/2}\right)^2\,dr_i \\
&\ \quad + \int_{\delta}^{2\delta}\left(1 - \bar{\varphi}\left(\frac{\rho}{2\delta}r_i\right)\right)\left(w - \frac{c_i}{2}\bar{\varphi}(r_i)r^{-1/2}\right)^2\,dr_i,
\end{align*}
and therefore, since $\J_i(v_i) < \infty$,
\[
\limsup_{n \to \infty}\int_{\delta}^{2\delta}\left(\frac{\bar{Z}^{(i)}_n(r_i,0)}{\sqrt{\e_n}} - \frac{c_i}{2}\bar{\varphi}(r_i)r_i^{-1/2}\right)^2\,dr_i \le \int_{\delta}^{2\delta}\left(w - \frac{c_i}{2}\bar{\varphi}(r_i)r^{-1/2}\right)^2\,dr_i.
\]
In addition, using the fact that $\bar{\varphi}\left(\frac{\rho}{2\delta}r_i\right) = 0$ for $r_i \ge 2\delta$, we obtain
\[
\int_{\Gamma_D \setminus \bigcup_iB_{2\delta}(\bm{x}_i)}\left(\frac{Z_n}{\sqrt{\e_n}} - \sum_{i = 1}^2c_i\psi_i\right)^2\,d\mathcal{H}^1 = \int_{\Gamma_D \setminus \bigcup_iB_{2\delta}(\bm{x}_i)}\left(w - \sum_{i = 1}^2c_i\psi_i\right)^2\,d\mathcal{H}^1.
\]
We now observe that the result of \Cref{e_nL2} straightforwardly extends to every $v_i \in H$ such that $\J_i(v_i) < \infty$. Consequently, we can argue as in the second step of the proof of \Cref{2gc} to deduce that
\[
\limsup_{n \to \infty}\frac{1}{2}\int_{\Omega \setminus \bigcup_i B_{\delta}(\bm{x}_i)}|\nabla(Z_n + \sqrt{\e_n} u_1)|^2\,d\bm{x} = 0.
\]
This concludes the proof of the limsup inequality under the assumption that (\ref{addass}) is satisfied.

If on the other hand 
\[
w \notin H^{1/2}\left(\Gamma_D \setminus \bigcup_{i = 1}^2\overline{B_{\delta/4}(\bm{x}_i)}\right)
\]
for any $\delta > 0$, we reproduce the mollification argument in (\ref{molli1}) - (\ref{molli}) and proceed as before.
\end{proof}

\section*{Acknowledgements}
This paper is part of the first author's Ph.D. thesis, Carnegie Mellon University. The authors acknowledge the Center for Nonlinear Analysis  where part of this work was carried out. The research of G. Gravina and G. Leoni was partially funded by the National Science Foundation under Grants No. DMS-1412095 and DMS-1714098.

\bibliographystyle{alpha}
\bibliography{mixed}
\end{document}